\newcommand{\bbN}{{\mathbb N}}
\newcommand{\bbC}{\mathbb C}
\newcommand{\cZ}{{\mathcal Z}}
\newcommand{\rs}{\restriction}
\DeclareMathOperator{\rd}{\chi_r}
\DeclareMathOperator{\id}{id} 
\newcommand{\cB}{B}
\newcommand{\e}{\varepsilon}
\newtheorem{theorem}{Theorem}[section] 
\newtheorem{corollary}[theorem]{Corollary}
\newtheorem{lemma}[theorem]{Lemma}
\newtheorem{prop}[theorem]{Proposition}
\renewcommand\labelenumi{(\arabic{enumi})}
\theoremstyle{definition}
\newtheorem{remark}[theorem]{Remark} 
\newtheorem{definition}[theorem]{Definition}
\newtheorem{problem}[theorem]{Problem}
\newcounter{my_enumerate_counter}
\newcommand{\pushcounter}{\setcounter{my_enumerate_counter}{\value{enumi}}}
\newcommand{\popcounter}{\setcounter{enumi}{\value{my_enumerate_counter}}}
\def\rs{\restriction}
\newcommand{\lbl}{\label}
\newcommand{\cP}{{\mathcal P}}
\newcommand{\bbZ}{\mathbb Z}
\DeclareMathOperator{\Ad}{Ad}
\newcommand{\F}{\mathcal F}
\newcommand{\G}{\mathcal G}
\DeclareMathOperator{\tr}{tr}
\title{Nonseparable UHF algebras I: Dixmier's problem}
\author{Ilijas Farah}
\address{Department of Mathematics and Statistics\\
York University\\
4700 Keele Street\\
North York, Ontario\\ Canada, M3J 1P3\\
and Matematicki Institut, Kneza Mihaila 34, Belgrade, Serbia}
\urladdr{http://www.math.yorku.ca/$\sim$ifarah}
\email{ifarah@mathstat.yorku.ca}
\author{Takeshi Katsura}
\address{Department of Mathematics\\
Faculty of Science and Technology\\
Keio University\\
3-14-1 Hiyoshi, Kouhoku-ku, Yokohama\\
JAPAN, 223-8522}
\email{katsura@math.keio.ac.jp}
\subjclass{46L05, 03E75}
\date{\today.}
\begin{document}
\begin{abstract}

There are three natural ways to define UHF (uniformly hyperfinite)
C*-algebras, and all three definitions are equivalent for separable
algebras. In 1967 Dixmier asked whether the three definitions remain
equivalent for not necessarily separable algebras. 
We give a complete answer to this question. 
More precisely, we show that in small cardinality 
two definitions remain equivalent, 
and give counterexamples in other cases. 
Our results do not use any additional
set-theoretic axioms beyond the usual axioms, namely ZFC.

\end{abstract}
\maketitle

\section{Introduction}

Let $A$ be a C*-algebra and let $\e$ be a positive number. 
For an element $x$ of $A$ and a subset
$\F$ of $A$, we write $x \in_\e \F$ if there exists $y \in \F$ such
that $\| x - y \| < \e$. For two subsets $\F, \G$ of $A$, we write
$\F \subseteq_\e \G$ if $x \in_\e \G$ for all $x \in \F$. For each $n
\in \bbN$, we denote by $M_n(\bbC)$ the unital C*-algebra of all
$n\times n$ matrices with complex entries. A C*-algebra which is
isomorphic to $M_n(\bbC)$ for some $n \in \bbN$ is called a
\emph{full matrix algebra}.

\begin{definition}
A C*-algebra $A$ is said to be
\begin{itemize}
\item \emph{uniformly hyperfinite}
(or \emph{UHF})
if $A$ is isomorphic to a tensor product
of full matrix algebras.
\item \emph{approximately matricial}
(or \emph{AM})
if it has a directed family of
full matrix subalgebras with dense union.
\item \emph{locally matricial}
(or \emph{LM})
if for any finite subset $\F$ of $A$ and any $\e > 0$,
there exists a full matrix subalgebra $M$ of $A$
with $\F \subseteq_\e M$,
\end{itemize}
\end{definition}

For a definition of tensor products,
see Definition~\ref{Def:tensor}.
The property LM was called \emph{matroid}
in \cite[Definition~1.1]{Dix:Some}.
A UHF algebra is unital by definition,
and it is easy to see that UHF implies AM
and that AM implies LM.
In \cite[Theorem~1.13]{Glimm:On},
Glimm shows that a unital separable LM algebra is UHF
(see also \cite[Remark~1.3 and Theorem~1.6]{Dix:Some}).
Thus for separable C*-algebras,
the three conditions UHF, unital AM and unital LM coincide.
Dixmier asked
whether these three conditions coincide
for general C*-algebras in \cite[Problem~8.1]{Dix:Some}.
We show that this is not the case.
To state our results precisely,
we need the following notion.

\begin{definition}\label{D.cd}
The \emph{character density} $\chi(A)$ of a C*-algebra $A$
is the smallest cardinality of a dense subset of $A$.
\end{definition}

Hence $A$ is separable if and only if
its character density $\chi(A)$ is
the first infinite cardinal $\aleph_0$.
Note that $\chi(A)$
is equal to the smallest cardinality
of an infinite generating subset of $A$.

The following are our main results
which completely answer \cite[Problem~8.1]{Dix:Some}.
Note that $\aleph_1$ is the smallest uncountable cardinal.

\begin{theorem} \label{Thm1}
\begin{enumerate}
\item
For a C*-algebra with character density at most $\aleph_1$,
AM and LM are equivalent.
\item
For every cardinal $\kappa > \aleph_1$,
there exists a unital LM algebra with character density $\kappa$
which is not AM.
\item
For every cardinal $\kappa \geq \aleph_1$, 
there exists a unital AM algebra 
with character density $\kappa$ which is not UHF. 
\end{enumerate}
\end{theorem}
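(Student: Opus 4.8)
The plan is to treat the three parts separately. For (1), AM trivially implies LM: given a finite $\F$ and $\e>0$, by density each element of $\F$ is $\e$-approximated in some member of the directed family, and directedness gives a single full matrix subalgebra approximating all of $\F$. So only LM $\Rightarrow$ AM needs work, and by Glimm's theorem it already holds in the separable case. The workhorse I would isolate first is an \emph{extension lemma}: if $A$ is LM, $M\subseteq A$ is a full matrix subalgebra, $\F\subseteq A$ is finite and $\e>0$, then there is a full matrix subalgebra $N$ with $M\subseteq N\subseteq A$ and $\F\subseteq_\e N$. To prove it, apply LM to $\F$ together with a system of matrix units for $M$, obtaining an approximating full matrix subalgebra $N_0$; since the matrix units of $M$ then lie $\delta$-close to $N_0$, a standard perturbation yields a unitary $u$ with $\|u-1\|$ small and $uMu^*\subseteq N_0$, so $N:=u^*N_0u$ works.

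For $\chi(A)\le\aleph_1$ I would first write $A$ as the closure of a continuous $\subseteq$-increasing union $\bigcup_{\alpha<\omega_1}A_\alpha$ of \emph{separable} subalgebras, each again LM; this is arranged by a closing-off (L\"owenheim--Skolem) argument under the Skolem function sending $(\F,\e)$ to the matrix units of a witnessing matrix subalgebra, so that every finite subset of $A_\alpha$ is approximated inside $A_\alpha$. Each $A_\alpha$ is then separable LM, hence AM. The real task is to assemble one \emph{directed} family of full matrix subalgebras with dense union, and I would do this by building a point-norm approximating inductive system of full matrix algebras over a directed poset of size $\le\aleph_1$ (via the extension lemma and an Elliott-type approximate intertwining) whose limit is $A$. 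Note this family cannot be a chain: an increasing $\omega$-chain of full matrix algebras of growing dimension already has infinite-dimensional closure, so the target index poset resembles $[\omega_1]^{<\omega}$ rather than an ordinal. The extension lemma gives the successor steps; the crux is the limit stages, where one must fuse the countably many matrix subalgebras accumulated along a cofinal sequence into a single Glimm chain containing each of them, so that any two earlier members retain a common full matrix upper bound. Maintaining this pairwise-joinable (tensor-product-like) coherence through limits is the main obstacle, and it is exactly here that $\aleph_1$ is special: every limit below $\omega_1$ has countable cofinality, so Glimm's theorem fuses the required countably many pieces, whereas cofinality $\omega_1$ offers no such fusion.

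For the counterexamples (2) and (3) I would build the algebras directly as (generalized) inductive limits of full matrix algebras over tailored uncountable directed index sets, so that LM (for (2)) and AM (for (3)) hold by construction, and then defeat the stronger property by a combinatorial obstruction. In (3) the algebra is AM by design; to show it is not UHF, a tensor decomposition $A\cong\bigotimes_i M_{n_i}$ supplies a generating family of pairwise-commuting, tensorially independent full matrix subalgebras, and I would arrange via a combinatorial gadget on $\kappa$ (for instance a suitably incompatible almost disjoint or coherent family) that no such family can exist, deriving a contradiction by reflecting a hypothetical decomposition down to a separable subalgebra, where it would force uniform Glimm/supernatural data incompatible with the gadget. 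For (2), LM is the easy local property to guarantee during the construction, while to rule out AM I would show that any directed family of full matrix subalgebras with dense union yields, through a closing-off argument, a coherent matricial structure along a cofinality-$\omega_1$ sequence; planting the gadget at the $\omega_1$-cofinal points of $\kappa$, where Glimm fusion is unavailable, blocks this, so no such family exists.

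The main obstacle throughout is the \emph{negative} half of the problem. In (1) it is the passage through $\omega$-cofinal limits while keeping the matrix subalgebras pairwise joinable, which is what pins the threshold at $\aleph_1$; in (2) and (3) it is proving the \emph{non-existence} of a directed family, respectively of a tensor decomposition, which cannot be settled by local inspection and instead requires an invariant that is stable under the construction together with a reflection argument against the combinatorial gadget. I expect that formulating the invariant and the gadget so that they interact correctly --- local matriciality surviving while the global structure provably fails --- will be the heart of the proof.
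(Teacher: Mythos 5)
For part (1) your outline is essentially the paper's proof---an $\omega_1$-filtration by separable LM subalgebras, Glimm's theorem in the separable case, a Davidson-type perturbation lemma (your extension lemma; cf.\ Lemma~\ref{L.Davidson} and Proposition~\ref{P.AM=LM+AF}), and a one-sided approximate intertwining (Lemma~\ref{L:well-known})---except that you misplace the crux: in Proposition~\ref{P.AF-aleph-1} the inductive object is a countable \emph{directed family} $\Upsilon_\xi$ of finite-dimensional (equivalently, full matrix) subalgebras dense in $A_\xi$, maintained so that $\Upsilon_\xi\subseteq\Upsilon_{\xi+1}$, whence limit stages are trivial unions and all the work is the successor step (Lemma~\ref{L.LF-ctble}), where countability of $\Upsilon_\xi$---needed to extract a cofinal increasing chain to intertwine into the larger algebra---is precisely what pins the threshold at $\aleph_1$, not countable cofinality of limits. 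Your ``fusion into a single chain containing the accumulated subalgebras'' is in fact what the successor step accomplishes.

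Parts (2) and (3) contain the genuine gap: you offer a program, not a construction, and the mechanisms you name would fail. For (3), reflecting a hypothetical tensor decomposition down to separable subalgebras cannot yield ``Glimm/supernatural data incompatible with the gadget'': in the paper's example $B_X=A_X\rtimes_\alpha(\bbZ/2\bbZ)$ with $A_X=\bigotimes_X M_2(\bbC)$, every separable reflection $B_Y$ for $Y\in[X]^{\aleph_0}$ is itself a CAR algebra (this is exactly how Corollary~\ref{Cor:DixProb8.3} is obtained), so every \emph{intrinsic} invariant of separable subalgebras is fully consistent with UHF-ness. The obstruction must be \emph{relational}: Lemma~\ref{L.2} shows that no $B_Y$ is complemented in $B_X$ (via the relative commutant computation of Lemma~\ref{Lem:A'}, resting on $uw_y=-w_yu$), whereas a tensor decomposition supplies a $\sigma$-complete directed family of separable \emph{complemented} subalgebras with dense union; the club reflection of Proposition~\ref{Prop:find_club} forces the two families to agree on a club, a contradiction (Proposition~\ref{P.B-X}). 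No almost disjoint or coherent family on $\kappa$ appears or is needed; AM holds by exhibiting explicit full matrix subalgebras $D_{(F,y)}$.

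For (2), your ``coherent matricial structure along a cofinality-$\omega_1$ sequence'' has no stated content, and the combinatorial threshold is not about cofinality at all. The paper first converts failure of AF into a checkable local statement via Lemma~\ref{L.LM-criterion} (a map $\rho$ with $\|a-\rho(a)\|<1$ and $C^*(\{\rho(a)\}_{a\in F})$ finite-dimensional for finite $F$), and the role of $\kappa>\aleph_1$ is the free-set Lemma~\ref{Lem:aleph2}: for $|X|>\aleph_1$, any assignment of countable $Y_x\ni x$ admits $x,y$ with $x\notin Y_y$ and $y\notin Y_x$---and this fails for $|X|\le\aleph_1$, which is exactly why the counterexamples begin above $\aleph_1$. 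The construction $B_{[X]^2}$ is rigged with \emph{generic} pairs of involutions $(v_{x,y},v_{y,x})$ so that for such a free pair the approximants $b_x,b_y$ have invertible coefficients $a_x,a_y$ after cutting by the anti-commuting symmetry $w_{x,y}$, and genericity together with Lemma~\ref{Lem:lin.indep} forbids any linear relation among $\{(b_xb_y)^n\}_{n}$, contradicting finite-dimensionality (Proposition~\ref{P:notAF}); LM holds because the $D_{(F,\iota)}$ are CAR algebras (Proposition~\ref{Prop:B=limCAR}). Your sketch supplies no analogue of the free-set lemma, of genericity, or of a usable non-AM/non-UHF criterion, and these are the heart of the negative halves of (2) and (3).
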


\begin{proof}
\begin{enumerate}
\item Follows from Proposition~\ref{P.AM=LM+AF}
and Proposition~\ref{P.AF-aleph-1}.
\item
Follows from Proposition~\ref{Prop:B=limCAR}
and Proposition~\ref{P:notAF}.
\item
Follows from Proposition~\ref{P.B-X}.
\qedhere
\end{enumerate}
\end{proof}

In  (3), we can also control the representation density 
(defined in Definition~\ref{Def:rd}) of the example 
(Theorem~\ref{T.rd.1}). 
In particular, 
we distinguish between AM algebras and UHF algebras
faithfully represented on a separable Hilbert space. 

Results similar to   (1) and (2) hold 
for approximately finite-dimensional (AF) algebras. 

\begin{definition}
A C*-algebra $A$ is said to be
\begin{itemize}
\item \emph{approximately finite-dimensional}
(or \emph{AF})
if it has a directed family of
finite-dimensional subalgebras with dense union.
\item \emph{locally finite-dimensional}
(or \emph{LF})
if for any finite subset $\F$ of $A$ and any $\e > 0$,
there exists a finite-dimensional subalgebra $D$ of $A$
with $\F \subseteq_\e D$.
\end{itemize}
\end{definition}

It is easy to see that AF implies LF.
In \cite[Theorem~2.2]{Bratt:Inductive}
Bratteli proved that for a separable C*-algebra,
AF and LF are equivalent.
We get the following.

\begin{theorem}
\begin{enumerate}
\renewcommand\labelenumi{(\arabic{enumi})'}
\item For a C*-algebra with character density at most $\aleph_1$,
AF and LF are equivalent.
\item For every cardinal $\kappa > \aleph_1$,
there exists an LF algebra with character density $\kappa$
which is not AF.
\end{enumerate}
\end{theorem}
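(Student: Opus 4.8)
The plan is to handle the two parts exactly as the corresponding parts of Theorem~\ref{Thm1}, since nothing in the relevant arguments changes when full matrix algebras are replaced by arbitrary finite-dimensional ones. The text already records that AF implies LF, so part~(1)$'$ amounts to proving the converse at character density $\aleph_1$; this is the same ingredient that feeds Theorem~\ref{Thm1}(1), stripped of the matricial bookkeeping. Part~(2)$'$ needs no new construction: the algebra built for Theorem~\ref{Thm1}(2) in Proposition~\ref{Prop:B=limCAR} is shown to fail AF in Proposition~\ref{P:notAF}, and being LM it is a fortiori LF, so after arranging its character density to equal a prescribed $\kappa>\aleph_1$ it is an LF algebra of character density $\kappa$ that is not AF.

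For part~(1)$'$, let $A$ be LF with $\chi(A)\le\aleph_1$ and fix a dense set $\{a_\alpha:\alpha<\omega_1\}$. I would construct, by recursion on $\alpha<\omega_1$, a continuous increasing chain of separable subalgebras $A_\alpha$ with $a_\alpha\in A_{\alpha+1}$ and $\overline{\bigcup_\alpha A_\alpha}=A$, together with, for each $\alpha$, an increasing chain of finite-dimensional subalgebras $D^\alpha_0\subseteq D^\alpha_1\subseteq\cdots$ whose union is dense in $A_\alpha$, subject to the coherence condition that for $\beta<\alpha$ every $D^\beta_k$ is contained in some $D^\alpha_m$. This condition makes the whole collection $\{D^\alpha_n:\alpha<\omega_1,\ n<\omega\}$ upward directed (any two blocks with indices $\alpha\le\alpha'$ are dominated, via the coherence condition and then the increasing chain at level $\alpha'$, by a single block), so that it is a directed family of finite-dimensional subalgebras with dense union $A$, whence $A$ is AF by definition.

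The content lies in the successor step, which is a relative form of Bratteli's theorem. To absorb $a_\alpha$ I would enlarge the skeleton one block at a time: given a block $D$, LF produces a finite-dimensional $E$ with $D\cup\{a_\alpha\}\subseteq_\e E$ for small $\e$, and the standard perturbation lemma for finite-dimensional C*-subalgebras---if a finite-dimensional subalgebra is sufficiently close to being contained in another subalgebra, then a small unitary conjugate of it is genuinely contained there---lets me replace $E$ by a unitary perturbation that honestly contains $D$ and nearly contains $a_\alpha$. Threading these perturbations through an exhaustion of $A_\alpha$ and closing up yields $A_{\alpha+1}$ with a skeleton preserving the coherence condition. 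The main obstacle is entirely quantitative: the $\e$'s and the perturbing unitaries must be chosen so that the unitaries converge and the approximate containments become exact in the limit, all while respecting the finite-dimensional blocks already committed to. This is precisely the difficulty Bratteli resolves in the separable case, now carried out relative to a growing subalgebra.

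At a limit stage $\lambda<\omega_1$ I set $A_\lambda=\overline{\bigcup_{\alpha<\lambda}A_\alpha}$; since $\lambda$ has countable cofinality, a routine diagonalization along a cofinal $\omega$-sequence in $\lambda$ produces a skeleton $(D^\lambda_n)_n$ for $A_\lambda$ satisfying the coherence condition, and no perturbation is needed. This is exactly where the cardinal bound enters: every limit below $\omega_1$ has countable cofinality, so the recursion never stalls, whereas for $\kappa>\aleph_1$ one reaches stages of uncountable cofinality at which the union of countably generated pieces need not remain separable and the directed family can fail to close up---which is the very phenomenon exploited by the counterexample of part~(2)$'$.
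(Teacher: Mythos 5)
Your part (2)$'$ is exactly the paper's: it cites Proposition~\ref{Prop:B=limCAR} and Proposition~\ref{P:notAF}, takes $|X|=\kappa$, and uses LM $\Rightarrow$ LF. Your part (1)$'$ also follows the paper's overall strategy --- a length-$\omega_1$ recursion threading finite-dimensional skeletons through a continuous chain of separable subalgebras, with trivial (diagonalization) limit steps and a relative Bratteli argument at successors --- but the packaging differs in two ways. First, the paper does not run the perturbation argument directly against LF approximants in the big algebra $A$: it first reflects (Lemma~\ref{L.AF-reflection}, Lemma~\ref{L.LF-aleph1}) to an $\omega_1$-chain of separable \emph{AF} subalgebras, so that the successor step (Lemma~\ref{L.LF-ctble}) happens inside a separable AF algebra whose skeleton $\{B_k\}$ is available in advance; Lemma~\ref{L:well-known} then absorbs the old blocks by conjugating the $D_n$'s forward into the fixed $B_k$'s and, at the end, conjugating the \emph{targets} back by the single limit unitary, setting $D'_n:=u^*B_{k_n}u$. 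Second, you use increasing sequences as skeletons with a cofinal-absorption coherence condition, where the paper (Proposition~\ref{P.AF-aleph-1}) uses nested countable directed families; your variant is fine --- coherence propagates transitively, and your limit-stage diagonalization is correct.

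The one place where your sketch, taken literally, would fail is the successor step. The perturbation lemma as you state it (``a small unitary conjugate of a nearly contained finite-dimensional algebra is genuinely contained'') is not strong enough to ``respect the blocks already committed to'': if at step $m$ you conjugate the new approximant to capture $D^\alpha_m$ exactly, that conjugation will in general destroy the exact containments arranged at earlier steps, since the new unitary need not commute with what was already placed. What makes the argument close --- and what the paper's Lemma~\ref{L.Davidson} records as the clause that $u$ may be chosen to commute with $D\cap B$ --- is that your exact-containment constraints are \emph{nested}: since $D^\alpha_{m-1}\subseteq D^\alpha_m$, the unitary moving $D^\alpha_m$ can be chosen to commute with the already-placed copy of $D^\alpha_{m-1}$, after which the convergent product of unitaries, applied once to the targets as in Lemma~\ref{L:well-known}, makes all containments exact simultaneously; one must \emph{not} try to correct the new chain step by step. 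With the commutation refinement made explicit and the construction organized this way (or by simply factoring through a separable AF envelope of $A_\alpha\cup\{a_\alpha\}$ and quoting Bratteli, as the paper does), your recursion goes through as described.
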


\begin{proof}
\begin{enumerate}
\renewcommand\labelenumi{(\arabic{enumi})'}
\item Follows from Proposition~\ref{P.AF-aleph-1}.
\item
Follows from Proposition~\ref{Prop:B=limCAR}
and Proposition~\ref{P:notAF}.\qedhere
\end{enumerate}
\end{proof}

A C*-algebra is AM (resp.\ AF) 
if and only if
it is obtained as a direct limit of
full matrix algebras 
(resp.\ finite-dimensional algebras) 
over a general directed set
(not necessarily a sequence).
On the other hand, 
it is not hard to see 
that a C*-algebra is LM (resp.\ LF) 
if and only if
it is obtained as a direct limit of
(separable) AM (resp.\ AF) algebras 
(Lemma~\ref{L.AF-reflection}). 
Hence the two theorems above imply the following. 

\begin{corollary}
The classes of AM algebras and AF algebras 
are not closed under taking direct limits. 
\end{corollary}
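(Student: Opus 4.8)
The plan is to read the corollary off directly from the two existence theorems together with the characterization of LM and LF algebras recorded in Lemma~\ref{L.AF-reflection}. Recall from that lemma that a C*-algebra is LM (resp.\ LF) precisely when it arises as a direct limit, over some directed set, of separable AM (resp.\ AF) algebras. Since every separable AM algebra is in particular an AM algebra, and likewise in the AF case, it suffices to produce an LM algebra that fails to be AM and an LF algebra that fails to be AF.

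For the AM assertion I would fix any cardinal $\kappa > \aleph_1$ and invoke Theorem~\ref{Thm1}(2) to obtain a unital LM algebra $A$ of character density $\kappa$ that is not AM. Applying Lemma~\ref{L.AF-reflection} to $A$ exhibits it as a direct limit of separable AM algebras. Each term of this system is an AM algebra, yet the limit $A$ is not; this shows that the class of AM algebras is not closed under direct limits.

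The AF assertion is entirely parallel: using part (2)$'$ of the theorem on AF and LF algebras, fix $\kappa > \aleph_1$ and an LF algebra $B$ of character density $\kappa$ that is not AF. By Lemma~\ref{L.AF-reflection}, $B$ is a direct limit of separable AF algebras, each of which is an AF algebra, while $B$ itself is not. Hence the class of AF algebras is likewise not closed under direct limits.

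There is no genuine obstacle at the level of the corollary itself: all the substantive work is packaged into the referenced propositions, which supply the non-AM LM algebra and the non-AF LF algebra, and into Lemma~\ref{L.AF-reflection}. The only point requiring care is to apply the characterization in the correct direction --- namely that an LM (resp.\ LF) algebra decomposes as a direct limit of \emph{separable} AM (resp.\ AF) algebras, and that these separable building blocks genuinely belong to the class under discussion --- so that the counterexample algebra really is a direct limit of members of the class that nevertheless escapes the class.
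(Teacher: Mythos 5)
Your proposal is correct and is essentially identical to the paper's own argument: the paper derives the corollary in exactly this way, combining Theorem~\ref{Thm1}~(2) (resp.\ part (2)$'$ of the AF/LF theorem) with the characterization in Lemma~\ref{L.AF-reflection} of LM (resp.\ LF) algebras as direct limits of separable AM (resp.\ AF) algebras. Your closing caution about applying the characterization in the right direction is the same observation the paper makes in the paragraph preceding the corollary, so nothing further is needed.
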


Some of the results of the present paper were announced in 
\cite{Ka:Non-separable}. 
By extending our methods the first author constructed an AM algebra 
that has faithful irreducible representations 
both on a separable Hilbert space and 
on a nonseparable Hilbert space~(\cite{Fa:Graphs}).
In the sequel to this paper \cite{FaKa:NonseparableII} 
we show that the classification problems
for UHF and AM algebras are significantly different.

\subsection*{Organization of the paper}
In \S\ref{Sec:Pre} we set up the toolbox used in the paper. In
\S\ref{Sec:LMnotUHF} we use the Jiang--Su algebra to distinguish LM
algebras from UHF algebras. 
$\sigma$-complete directed systems are used in \S\ref{Sec:AMnotUHF} 
to distinguish between AM and UHF algebras. 
The relation between AM and LM algebras 
as well as the one between AF and LF algebras are explained in
\S\ref{Sec:AM=LM} and \S\ref{Sec:AM neq LM}. 
In \S\ref{S.rd} we introduce 
the representation density, 
and using it 
distinguish between AM algebras and UHF algebras
faithfully represented on a given Hilbert space.

\section{Preliminary}\label{Sec:Pre}

In the present section we fix the terminology and prove some standard facts
from set theory, $\sigma$-complete directed systems 
and tensor products (respectively).

\subsection{Set theory}

By $X\amalg Y$ we denote the disjoint union of sets $X$ and $Y$.
If $f\colon X\to Y$ and $Z\subseteq X$ 
then we write $f[Z]=\{f(z) : z\in Z\}$ instead of
the notation $f(Z)$ commonly accepted outside of set theory.
Let us denote the cardinality of a set $X$ by $|X|$.
The countable infinite cardinal 
and the smallest uncountable cardinal
are denoted by $\aleph_0$ and $\aleph_1$, respectively.
The smallest uncountable ordinal is denoted by $\omega_1$.

\begin{lemma}\label{Lem:aleph2}
Let $X$ be a set.
For each $x \in X$,
choose a countable subset $Y_x \subseteq X$
with $x \in Y_x$.
If $|X| > \aleph_1$ then
one can find two elements $x,y \in X$
such that $x \notin Y_y$ and $y \notin Y_x$.
\end{lemma}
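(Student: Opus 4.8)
The plan is to argue by contradiction: suppose that for \emph{every} pair of distinct $x,y\in X$ we have $x\in Y_y$ or $y\in Y_x$, and derive that $|X|\le\aleph_1$. A first instinct is to choose a subset of size $\aleph_2$, well-order it, and count pairs, but this fails: each $Y_x$ has only $\aleph_0$ elements, so the total number of ``witnessed'' incidences is at most $\aleph_2\cdot\aleph_0=\aleph_2$, which equals the number of pairs $\binom{\aleph_2}{2}=\aleph_2$. Thus a naive cardinality count gives no contradiction, and a structural device is needed.

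The device I would use is to build a subset $N\subseteq X$ that is closed under the assignment $x\mapsto Y_x$ and has size exactly $\aleph_1$ (a poor man's elementary submodel). Concretely, fix any $N_0\subseteq X$ with $|N_0|=\aleph_1$ and set $N_{n+1}=N_n\cup\bigcup_{x\in N_n}Y_x$; since each $Y_x$ is countable, $|N_{n+1}|\le\aleph_1\cdot\aleph_0=\aleph_1$, so $N:=\bigcup_{n\in\bbN}N_n$ satisfies $|N|=\aleph_1$ and, crucially, $x\in N\Rightarrow Y_x\subseteq N$.

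Finally I would exploit the gap between $|N|$ and $|X|$. Since $|X|>\aleph_1=|N|$, we may pick $x^*\in X\setminus N$. As $Y_{x^*}$ is countable while $|N|=\aleph_1$, we may also pick $y\in N\setminus Y_{x^*}$; note $y\ne x^*$ because $y\in N$ and $x^*\notin N$. Applying the contradiction hypothesis to the pair $x^*,y$ and using $y\notin Y_{x^*}$ forces $x^*\in Y_y$. But $y\in N$ together with the closure property gives $Y_y\subseteq N$, whence $x^*\in N$, contradicting $x^*\notin N$. The only step that requires care is the closure construction; the resulting threshold $\aleph_1$ is moreover sharp, since enumerating a set of size $\aleph_1$ as $\{z_\alpha:\alpha<\omega_1\}$ and taking $Y_{z_\alpha}=\{z_\beta:\beta\le\alpha\}$ (each countable, as $\alpha<\omega_1$) yields a family with no free pair. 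This explains why the hypothesis $|X|>\aleph_1$ cannot be weakened.
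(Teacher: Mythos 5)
Your proof is correct and is essentially the paper's argument: the paper directly takes $Z\subseteq X$ with $|Z|=\aleph_1$, picks $x\in X\setminus\bigcup_{z\in Z}Y_z$ (nonempty since this union has cardinality at most $\aleph_1\cdot\aleph_0=\aleph_1<|X|$) and then $y\in Z\setminus Y_x$, which is exactly what your argument does once one notes that your $\omega$-stage closure is superfluous --- a set $N$ closed under $x\mapsto Y_x$ satisfies $\bigcup_{y\in N}Y_y\subseteq N$, so your $x^*\notin N$ is just the paper's $x\notin\bigcup_{z\in Z}Y_z$ with $Z=N$, and the contradiction framing is cosmetic. Your sharpness example ($Y_{z_\alpha}=\{z_\beta:\beta\le\alpha\}$ on a set of size $\aleph_1$) is also precisely the remark the paper records after the lemma.
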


\begin{proof}
Take $Z \subseteq X$ with $|Z| = \aleph_1$.
Choose $x \in X \setminus \bigcup_{z \in Z} Y_z$
and $y\in Z \setminus Y_x$.
Then $x$ and $y$ are as required.
\end{proof}

\begin{remark}
The conclusion may be false if $|X| \leq \aleph_1$. 
To see this consider $X = \omega_1$ 
and $Y_x = \{y \in \omega_1 : y \leq x\}$ for $x \in \omega_1$.
\end{remark}

\begin{definition}
A directed set $\Lambda$ is said to be \emph{$\sigma$-complete}
if every countable directed $Z\subseteq \Lambda$
has the supremum $\sup Z \in \Lambda$.
\end{definition}

The ordered set $\omega_1$ is $\sigma$-complete. 
The following is another
$\sigma$-complete directed set considered in this paper.

\begin{definition}\label{Def.[X]}
For an infinite set $X$,  
we denote by $[X]^{\aleph_0}$ the set of all countable infinite subsets
of $X$, considered as a directed set with respect to the inclusion.
\end{definition}

\begin{definition}
Let $\Lambda$ be a $\sigma$-complete directed set. A subset $\Lambda_0$ of
$\Lambda$ is said to be \emph{closed} if for every countable directed
$Z\subseteq \Lambda_0$ we have $\sup Z \in \Lambda_0$, and \emph{cofinal} if
for every $\lambda \in \Lambda$ there exists $\lambda_0 \in \Lambda_0$ such
that $\lambda \preceq \lambda_0$.

A closed and cofinal subset is called a \emph{club}. 
\end{definition}

A club is an abbreviation of a \emph{closed and unbounded} set. 
The condition `\emph{unbounded}' 
(meaning `not having an upper bound') 
is equivalent to `cofinal' 
for totally ordered sets such as $\omega_1$, 
but is strictly weaker than `cofinal' 
for general directed sets.
A widely accepted custom among set theorists is calling 
closed and \emph{cofinal} subsets of $[X]^{\aleph_0}$ 
\emph{closed and unbounded} sets (or \emph{clubs}). 
Reluctantly, we continue this unfortunate
abuse of terminology in our paper. 
This can be justified by the fact that 
$\omega_1$ and $[X]^{\aleph_0}$ are 
the only $\sigma$-complete directed sets 
that we will consider from the next section on.

\begin{lemma}\label{Lem:phi=id}
Let $\Lambda$ be a $\sigma$-complete directed set. Let $\Lambda_0$ and
$\Lambda_0'$ be clubs of $\Lambda$ and $\phi\colon \Lambda_0 \to \Lambda_0'$ be
an order isomorphism. Then there exists a club $\Lambda_{00}$ of $\Lambda$ such
that $\Lambda_{00} \subseteq \Lambda_0 \cap \Lambda_0'$ and $\phi
\rs_{\Lambda_{00}} =\id$.
\end{lemma}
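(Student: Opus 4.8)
The plan is to take for $\Lambda_{00}$ the set of fixed points of $\phi$,
\[
\Lambda_{00}=\{\lambda\in\Lambda_0\cap\Lambda_0' : \phi(\lambda)=\lambda\},
\]
and to verify directly that this set is closed and cofinal, hence a club. Before doing so I would record three routine preliminaries. First, every club is itself directed: given $\mu_1,\mu_2$ in a club, take an upper bound in $\Lambda$ and then, by cofinality, an element of the club above it; thus a club is a $\sigma$-complete directed set whose countable directed suprema are computed in $\Lambda$. Second, the order isomorphism $\phi$ preserves countable directed suprema, so that $\phi(\sup Z)=\sup\phi[Z]$ whenever $Z\subseteq\Lambda_0$ is countable and directed. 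Third, $\Lambda_0\cap\Lambda_0'$ is again a club: closure is immediate, and cofinality follows by the standard interleaving argument, building above a given $\lambda$ an increasing sequence $\mu_0\preceq\mu_1\preceq\cdots$ with $\mu_{2n}\in\Lambda_0$ and $\mu_{2n+1}\in\Lambda_0'$ and noting that its supremum lies in both clubs.

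Closure of $\Lambda_{00}$ is then short. If $Z\subseteq\Lambda_{00}$ is countable and directed with $\lambda^*=\sup Z$, then $\lambda^*\in\Lambda_0\cap\Lambda_0'$ since that intersection is closed, and $\phi[Z]=Z$ because $\phi$ fixes each element of $Z$; applying preservation of suprema gives $\phi(\lambda^*)=\sup\phi[Z]=\sup Z=\lambda^*$, so $\lambda^*\in\Lambda_{00}$.

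The main work, and the step I expect to be the real obstacle, is cofinality. Given $\lambda\in\Lambda$, I would build an increasing sequence $(\lambda_n)$ inside the club $\Lambda_0\cap\Lambda_0'$: start with some $\lambda_0\in\Lambda_0\cap\Lambda_0'$ above $\lambda$, and at each stage choose $\lambda_{n+1}\in\Lambda_0\cap\Lambda_0'$ above all three of $\lambda_n$, $\phi(\lambda_n)$, and $\phi^{-1}(\lambda_n)$, which is possible because $\Lambda$ is directed and $\Lambda_0\cap\Lambda_0'$ is cofinal. Put $\lambda^*=\sup_n\lambda_n$, which lies in $\Lambda_0\cap\Lambda_0'$ by closure. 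From $\phi(\lambda_n)\preceq\lambda_{n+1}$ we get $\sup_n\phi(\lambda_n)\preceq\lambda^*$, while applying $\phi$ to $\phi^{-1}(\lambda_n)\preceq\lambda_{n+1}$ yields $\lambda_n\preceq\phi(\lambda_{n+1})$ and hence $\lambda^*\preceq\sup_n\phi(\lambda_n)$. Since $\phi$ preserves the supremum of the countable directed set $\{\lambda_n\}$, this forces $\phi(\lambda^*)=\sup_n\phi(\lambda_n)=\lambda^*$, so $\lambda^*\in\Lambda_{00}$ lies above $\lambda$. The delicate points are exactly to keep the constructed chain countable and directed so that its supremum exists and remains in the club, and to interleave $\phi$ and $\phi^{-1}$ so that the two suprema $\sup_n\lambda_n$ and $\sup_n\phi(\lambda_n)$ are squeezed together and must coincide. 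With closure and cofinality established, $\Lambda_{00}$ is a club contained in $\Lambda_0\cap\Lambda_0'$ with $\phi\rs_{\Lambda_{00}}=\id$, as required.
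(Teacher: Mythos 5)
Your proof is correct and takes essentially the same approach as the paper: both define $\Lambda_{00}$ as the fixed-point set of $\phi$, note closure is immediate, and prove cofinality by recursively building a countable chain dominated alternately through $\phi$ and $\phi^{-1}$, so that the supremum is squeezed into being a fixed point. The only cosmetic difference is that you run a single sequence inside the club $\Lambda_0\cap\Lambda_0'$ (after checking the intersection is a club), whereas the paper interleaves two sequences, one in $\Lambda_0$ and one in $\Lambda_0'$, whose suprema coincide.
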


\begin{proof}
Set $\Lambda_{00} := \{\lambda \in \Lambda_0 \cap \Lambda_0' : \phi(\lambda) =
\lambda\}$. It is easy to see that $\Lambda_{00}$ is closed. We will see that
it is cofinal. Take $\lambda \in \Lambda$. Since $\Lambda_0$ is cofinal,
there exists $\lambda_1 \in \Lambda_0$ with $\lambda \preceq \lambda_1$. Since
$\Lambda_0'$ is cofinal, there exists $\lambda_1' \in \Lambda_0'$ with
$\lambda_1 \preceq \lambda_1'$ and $\phi(\lambda_1) \preceq \lambda_1'$.
Recursively, we can find $\lambda_n \in \Lambda_0$ and $\lambda_n' \in
\Lambda_0'$ for $n=1,2,\ldots$ such that
\[
\lambda_n \preceq \lambda_n', \quad
\phi(\lambda_n) \preceq \lambda_n', \quad
\lambda_n' \preceq \lambda_{n+1}, \quad
\phi^{-1}(\lambda_n') \preceq \lambda_{n+1}.
\]
Then
\[
\lambda_{00} := \sup \{\lambda_n\}_{n=1}^\infty
= \sup \{\lambda_n'\}_{n=1}^\infty \in \Lambda_0 \cap \Lambda_0'
\]
satisfies $\phi(\lambda_{00}) = \lambda_{00}$.
Thus we have found $\lambda_{00} \in \Lambda_{00}$
with $\lambda \preceq \lambda_{00}$.
\end{proof}

\begin{lemma} \label{Lem:project clubs}
Let $X$ and $Y$ be infinite sets. 
For a club $C$ in $[X \amalg Y]^{\aleph_0}$, 
there exists a club $C_0$ in $[X]^{\aleph_0}$ 
such that for every $\mu_0 \in C_0$ 
there exists $\mu \in C$ with $\mu_0 = \mu \cap X$. 
\end{lemma}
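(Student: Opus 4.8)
The plan is to reduce everything to the standard Skolem-closure description of clubs in $[S]^{\aleph_0}$. Recall the folklore fact that for any infinite set $S$ and any club $C\subseteq[S]^{\aleph_0}$ there is a function $f\colon S^{<\omega}\to S$ such that every $M\in[S]^{\aleph_0}$ with $f[M^{<\omega}]\subseteq M$ lies in $C$; call such an $M$ \emph{$f$-closed}, and for $A\subseteq S$ write $\mathrm{cl}_f(A)$ for the smallest $f$-closed superset of $A$, obtained by adjoining the values of $f$ on finite tuples $\omega$ many times. Since $f$ has finite arity, $\mathrm{cl}_f(A)$ is countable whenever $A$ is, so $\mathrm{cl}_f(A)\in C$ for every countably infinite $A$. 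Applying this with $S=X\amalg Y$, I would fix such an $f$ once and for all and then define
\[ C_0:=\{\mu_0\in[X]^{\aleph_0}: \mathrm{cl}_f(\mu_0)\cap X=\mu_0\}. \]
For every $\mu_0\in C_0$ the set $\mu:=\mathrm{cl}_f(\mu_0)$ is a countably infinite $f$-closed subset of $X\amalg Y$, hence $\mu\in C$, and by construction $\mu\cap X=\mu_0$; this supplies the required witness. It then remains only to check that $C_0$ is a club.

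For closedness, let $Z\subseteq C_0$ be countable and directed and put $\mu_0^\infty=\sup Z=\bigcup Z$. Since $\mathrm{cl}_f$ is monotone, $\{\mathrm{cl}_f(\mu_0):\mu_0\in Z\}$ is a directed family of $f$-closed sets; its union is again $f$-closed (any finite tuple from the union lies in a single member by directedness) and is the least $f$-closed set containing $\mu_0^\infty$, so it equals $\mathrm{cl}_f(\mu_0^\infty)$. Intersecting with $X$ and using $\mu_0\in C_0$ termwise gives $\mathrm{cl}_f(\mu_0^\infty)\cap X=\bigcup_{\mu_0\in Z}(\mathrm{cl}_f(\mu_0)\cap X)=\bigcup Z=\mu_0^\infty$, so $\mu_0^\infty\in C_0$.

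For cofinality, given any $\mu_0\in[X]^{\aleph_0}$ I would iterate the operation ``close under $f$, then cut back to $X$'': set $A_0=\mu_0$ and $A_{n+1}=\mathrm{cl}_f(A_n)\cap X$. Each $A_n$ is a countable subset of $X$ and $A_n\subseteq A_{n+1}$, so $\mu_0':=\bigcup_n A_n\in[X]^{\aleph_0}$ and $\mu_0\subseteq\mu_0'$. As in the previous paragraph the increasing union $\bigcup_n\mathrm{cl}_f(A_n)$ is $f$-closed and equals $\mathrm{cl}_f(\mu_0')$, whence $\mathrm{cl}_f(\mu_0')\cap X=\bigcup_n(\mathrm{cl}_f(A_n)\cap X)=\bigcup_n A_{n+1}=\mu_0'$; thus $\mu_0'\in C_0$.

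The step I expect to be the real obstacle is exactly this stabilization of the $X$-part: a single application of $f$-closure can introduce new points of $X$, and finitely many applications need not exhaust them, so one genuinely needs the $\omega$-limit together with the fact that $[X]^{\aleph_0}$ is $\sigma$-complete. This is also why the argument must use the \emph{minimal} closure $\mathrm{cl}_f$ rather than an arbitrary cofinal selector $A\mapsto h(A)\in C$: only minimality guarantees that no spurious $X$-points are added and hence that the process converges. Establishing (or citing) the Skolem-closure characterization of clubs is therefore the one external ingredient on which the whole proof rests.
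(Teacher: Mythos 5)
Your proof is correct, and its fixed-point skeleton is exactly that of the paper's proof: define $C_0$ as the set of $\mu_0$ fixed under ``close up into $C$, then intersect with $X$,'' get closedness of $C_0$ from the fact that the closure operator commutes with countable directed unions, and get cofinality by $\omega$-fold iteration of the operation followed by taking the union. The genuine difference is where the operator comes from. You import it wholesale from Kueker's characterization of clubs (which the paper states only as an unproved remark \emph{after} this lemma), taking $g(\mu_0)=\mathrm{cl}_f(\mu_0)$ for a Skolem function $f$ on $X\amalg Y$. The paper instead manufactures a continuous operator by hand: using cofinality of $C$ it builds an increasing map $f\colon[X]^{<\aleph_0}\to C$ with $s\subseteq f(s)$ for every finite $s\subseteq X$, and sets $g(\mu_0)=\bigcup_{s\subseteq\mu_0}f(s)$; closedness of $C$ then gives $g(\mu_0)\in C$, and $g$ commutes with countable directed unions because it is a union over finite subsets. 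This buys self-containedness: the standard proof of Kueker's theorem contains essentially the same chain-building work the paper does inline, so your route is shorter on the page but strictly heavier in its external dependency, which is somewhat backwards given that the lemma is intended here as an elementary tool.

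One inaccuracy in your closing remark: what makes the argument converge is not minimality of $\mathrm{cl}_f$ but \emph{continuity} --- commutation with countable increasing unions --- which for $\mathrm{cl}_f$ follows from $f$ having finite arity. The paper's $g$ is not a minimal closure in any sense (it freely adds points of both $X$ and $Y$ beyond what any closure condition demands), yet the identical closedness and cofinality arguments go through because $g$ is generated by its values on finite sets. What genuinely fails for an arbitrary monotone selector $h\colon[X\amalg Y]^{\aleph_0}\to C$ is the closedness of $C_0$, since witnesses along a countable directed family need not cohere; your diagnosis of which step is delicate is right, but the cure is continuity, not minimality.
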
 

\begin{proof} 
This is a well-known and very useful fact. 
We provide a proof for the reader's convenience.

Let $[X]^{<\aleph_0}$ denote the set of all finite subsets of $X$.
Since $C$ is cofinal, 
we can find an increasing map $f\colon [X]^{<\aleph_0}\to C$
satisfying $s \subseteq f(s)$ for all $s \in [X]^{<\aleph_0}$ 
by  induction on $|s|$.  
We define $g\colon [X]^{\aleph_0} \to [X \amalg Y]^{\aleph_0}$
by $g(\mu_0) := \bigcup_{s \subseteq \mu_0}f(s)$
for $\mu_0 \in [X]^{\aleph_0}$. 
For every $\mu_0 \in [X]^{\aleph_0}$, 
we have $\mu_0 \subseteq g(\mu_0)$ 
and $g(\mu_0) \in C$ 
because $f$ is increasing and $C$ is closed. 
We set 
\[
C_0 := \{ \mu_0 \in [X]^{\aleph_0} : \mu_0 = g(\mu_0) \cap X \}. 
\]
Then $C_0$ is closed 
because for a countable directed $Z\subseteq [X]^{\aleph_0}$, 
we have 
\[
\bigcup_{\mu_0 \in Z}g(\mu_0)=g\Big(\bigcup_{\mu_0 \in Z}\mu_0\Big). 
\]
It remains to show that $C_0$ is cofinal in $[X]^{\aleph_0}$. 
Take $\lambda_0 \in [X]^{\aleph_0}$ arbitrarily. 
We define $\lambda_1,\lambda_2, \ldots \in [X]^{\aleph_0}$ by 
$\lambda_{n+1} := g(\lambda_n) \cap X$ for $n=0,1,\ldots$. 
Then $\{\lambda_n\}_{n=0}^\infty$ is an increasing 
sequence in $[X]^{\aleph_0}$ 
and $\mu_0 := \bigcup_{n=0}^\infty \lambda_n$ is in $C_0$. 
Thus $C_0$ is cofinal. 
Therefore we get a club $C_0$ in $[X]^{\aleph_0}$ as required. 
\end{proof} 

We note that by a well-known result of Kueker for every  
club $C$ in $[X]^{\aleph_0}$
there exists $h\colon [X]^{<\aleph_0}\to X$ such that 
every $\mu\in [X]^{\aleph_0}$ closed under $h$ belongs to $C$.

\subsection{$\sigma$-complete directed families of subalgebras}

By a subalgebra of a C*-algebra we mean a C*-subalgebra,
and by a unital subalgebra of a unital C*-algebra we mean a
C*-subalgebra containing the unit of the original C*-algebra.
By a directed family $\{A_\lambda\}_{\lambda\in \Lambda}$ of subalgebras of a
C*-algebra $A$, we mean that $\Lambda$ is a directed set,
and $\lambda \preceq \mu$ if and only if $A_\lambda \subseteq A_\mu$.
Thus by definition $\Lambda \ni \lambda \mapsto A_\lambda$ is injective. 

\begin{definition}
A directed family $\{A_\lambda\}_{\lambda\in \Lambda}$
of subalgebras of a C*-algebra $A$
is said to be \emph{$\sigma$-complete}
if $\Lambda$ is $\sigma$-complete
and for every countable directed $Z\subseteq \Lambda$,
$A_{\sup Z}$ is the closure of the union of $\{A_\lambda\}_{\lambda\in Z}$.
\end{definition}

In other words, 
a directed family $\{A_\lambda\}_{\lambda\in \Lambda}$ 
is $\sigma$-complete 
if $\overline{\bigcup_{\lambda \in Z}A_\lambda}$ 
is in the family for 
every countable directed $Z\subseteq \Lambda$. 

\begin{lemma} \label{L:restriction to club}
Let $A$ be a C*-algebra, and let $\{A_\lambda\}_{\lambda\in \Lambda}$
be a $\sigma$-complete directed family of subalgebras of
$A$ with dense union. 
Then for a club $\Lambda_0 \subseteq \Lambda$,
the restriction $\{A_\lambda\}_{\lambda\in \Lambda_0}$
is also a $\sigma$-complete directed family with dense union.
\end{lemma}

\begin{proof}
The restriction $\{A_\lambda\}_{\lambda\in \Lambda_0}$
is $\sigma$-complete because $\Lambda_0$ is closed, 
and its union is dense because $\Lambda_0$ is cofinal. 
\end{proof}

\begin{lemma}\label{L:s-comp family}
Every C*-algebra $A$ has
a $\sigma$-complete directed family of
separable subalgebras with dense union.
\end{lemma}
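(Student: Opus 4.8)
The plan is to take $\mathcal F$ to be the collection of \emph{all} separable C*-subalgebras of $A$, ordered by inclusion, and to verify directly that it forms a $\sigma$-complete directed family with dense union. Under the convention fixed above, a ``directed family'' is exactly such a collection viewed as its own index set via $\subseteq$ (with the map $\lambda\mapsto A_\lambda$ being the identity, hence trivially injective), so there is nothing to set up beyond checking three properties.

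First I would dispatch directedness and density. Given $B,B'\in\mathcal F$, the subalgebra $C^*(B\cup B')$ generated by $B\cup B'$ is separable (it is generated by the separable set $B\cup B'$) and contains both, so $\mathcal F$ is directed. For density, each $a\in A$ lies in the separable subalgebra $C^*(\{a\})\in\mathcal F$, so $\bigcup\mathcal F = A$; in particular the union is dense.

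The heart of the argument is $\sigma$-completeness. Let $Z\subseteq\mathcal F$ be countable and directed, say $Z=\{B_n : n\in\bbN\}$. Since $Z$ is directed, $\bigcup_n B_n$ is a $*$-subalgebra, so its norm closure $\overline{\bigcup_n B_n}$ is a C*-subalgebra of $A$. Choosing a countable dense $D_n\subseteq B_n$ for each $n$, the countable set $\bigcup_n D_n$ is dense in $\overline{\bigcup_n B_n}$, so this closure is again separable and hence lies in $\mathcal F$. It clearly contains every member of $Z$, and any $C\in\mathcal F$ containing all of $Z$ contains $\bigcup_n B_n$ and therefore (being closed) its closure; thus $\overline{\bigcup_n B_n}$ is the supremum $\sup Z$ in $\mathcal F$. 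This simultaneously shows that $\mathcal F$ is $\sigma$-complete as a directed set and that $A_{\sup Z}=\overline{\bigcup_{B\in Z}B}$, as the definition of a $\sigma$-complete directed family demands.

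I do not expect any serious obstacle here; the only points needing care are that the directedness of $Z$ is precisely what makes $\bigcup_n B_n$ an algebra (so that its closure is a C*-algebra), and that a countable union of separable sets is separable. If a more concrete index set is wanted for later use, the identical verification goes through with $\mathcal F=\{C^*(\mu) : \mu\in[X]^{\aleph_0}\}$ for a fixed dense $X\subseteq A$, using the identity $C^*\big(\bigcup_n\mu_n\big)=\overline{\bigcup_n C^*(\mu_n)}$ for an increasing sequence $\mu_n\in[X]^{\aleph_0}$.
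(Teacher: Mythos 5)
Your proposal is correct and is exactly the paper's argument: the paper's proof simply takes the family of all separable subalgebras of $A$ ordered by inclusion, leaving the verifications implicit. Your fleshed-out checks of directedness, density, and $\sigma$-completeness (via separability of $\overline{\bigcup_n B_n}$) are all accurate.
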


\begin{proof}
We can take the family of
all separable subalgebras of $A$ ordered by the inclusion.
\end{proof}

\begin{lemma}\label{L:s-comp union}
Let $A$ be a C*-algebra, and let $\{A_\lambda\}_{\lambda\in \Lambda}$
be a $\sigma$-complete directed family of subalgebras of
$A$ with dense union. 
For every separable subalgebra $A_0$ of $A$ 
there exists $\lambda\in \Lambda$ such that $A_0 \subseteq A_\lambda$.
\end{lemma}

\begin{proof}
Let $\{a_1,a_2,\ldots\}$ be a dense sequence of $A_0$.
For each $n \in \bbN$,
one can inductively find $\lambda_n \in \Lambda$
such that $a_i \in_{1/n} A_{\lambda_n}$
for $i = 1, 2, \ldots, n$ and $\lambda_{n-1} \preceq \lambda_n$
because the family $\{A_\lambda\}_{\lambda\in \Lambda}$ is directed 
and its union is dense in $A$.
Then $\lambda := \sup \{\lambda_n:n\in\bbN \} \in \Lambda$
satisfies $A_0 \subseteq A_\lambda$.
\end{proof}

By the lemma above,
we can see that
the union of a $\sigma$-complete directed 
family is automatically closed.

\begin{prop}\label{Prop:find_club}
Let $A$ and $B$ be C*-algebras,
and $\{A_\lambda\}_{\lambda\in \Lambda}$
and $\{B_{\lambda'}\}_{\lambda'\in \Lambda'}$
be $\sigma$-complete directed families of
separable subalgebras of $A$ and $B$ with dense union.
Let $\Phi\colon A \to B$ be an isomorphism.
Then there exist clubs $\Lambda_0 \subseteq \Lambda$
and $\Lambda_0' \subseteq \Lambda'$
and an order isomorphism $\phi\colon \Lambda_0 \to \Lambda_0'$
such that $\Phi[A_\lambda] = B_{\phi(\lambda)}$
for all $\lambda \in \Lambda_0$.
If $\Lambda = \Lambda'$,
then one can take $\Lambda_0 = \Lambda_0'$
and $\phi = \id$.
\end{prop}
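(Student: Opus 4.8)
The plan is a standard reflection/back-and-forth argument: I would isolate the set of indices that $\Phi$ already ``matches'', show it is a club, read off $\phi$ and $\Lambda_0'$ from it, and treat the case $\Lambda=\Lambda'$ by a final appeal to Lemma~\ref{Lem:phi=id}. Concretely, set
\[
\Lambda_0 := \{\lambda \in \Lambda : \Phi[A_\lambda] = B_{\lambda'} \text{ for some } \lambda' \in \Lambda'\}.
\]
Since the indexing $\lambda'\mapsto B_{\lambda'}$ is injective, each $\lambda\in\Lambda_0$ has a unique such witness, so I may define $\phi(\lambda):=\lambda'$ and $\Lambda_0':=\phi[\Lambda_0]$. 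Monotonicity in both directions is immediate from the chain of equivalences $\lambda_1\preceq\lambda_2 \iff A_{\lambda_1}\subseteq A_{\lambda_2}\iff B_{\phi(\lambda_1)}\subseteq B_{\phi(\lambda_2)}\iff \phi(\lambda_1)\preceq\phi(\lambda_2)$, so $\phi$ is an order isomorphism onto $\Lambda_0'$.

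For cofinality of $\Lambda_0$, fix $\lambda_0\in\Lambda$ and build interleaved sequences. Because $\Phi[A_{\lambda_n}]$ is separable, Lemma~\ref{L:s-comp union} supplies $\lambda_n'\in\Lambda'$ with $\Phi[A_{\lambda_n}]\subseteq B_{\lambda_n'}$; because $\Phi^{-1}[B_{\lambda_n'}]$ is separable, it supplies $\lambda_{n+1}\succeq\lambda_n$ with $\Phi^{-1}[B_{\lambda_n'}]\subseteq A_{\lambda_{n+1}}$, and I keep the $\lambda_n'$ increasing as well. Setting $\mu:=\sup_n\lambda_n$ and $\mu':=\sup_n\lambda_n'$, $\sigma$-completeness of the two families gives $A_\mu=\overline{\bigcup_n A_{\lambda_n}}$ and $B_{\mu'}=\overline{\bigcup_n B_{\lambda_n'}}$. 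The inclusions interleave as $\Phi[A_{\lambda_n}]\subseteq B_{\lambda_n'}\subseteq \Phi[A_{\lambda_{n+1}}]$, so $\bigcup_n\Phi[A_{\lambda_n}]=\bigcup_n B_{\lambda_n'}$, and since $\Phi$ is an isomorphism it is isometric and hence preserves closures, yielding $\Phi[A_\mu]=B_{\mu'}$. Thus $\mu\in\Lambda_0$ with $\lambda_0\preceq\mu$.

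Closedness is the same passage to a limit: for a countable directed $Z\subseteq\Lambda_0$ with $\mu:=\sup Z$, the image $\phi[Z]$ is countable and directed by monotonicity, so $\mu':=\sup\phi[Z]\in\Lambda'$, and applying $\sigma$-completeness together with isometry of $\Phi$ exactly as above gives $\Phi[A_\mu]=B_{\mu'}$, whence $\mu\in\Lambda_0$. So $\Lambda_0$ is a club. Running the whole argument with $A,B,\Phi$ replaced by $B,A,\Phi^{-1}$ shows that the symmetric set $\{\lambda'\in\Lambda' : \Phi^{-1}[B_{\lambda'}]=A_\lambda \text{ for some }\lambda\}$, which is precisely $\Lambda_0'$, is a club in $\Lambda'$. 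Therefore $\phi\colon\Lambda_0\to\Lambda_0'$ is an order isomorphism of clubs with $\Phi[A_\lambda]=B_{\phi(\lambda)}$ for all $\lambda\in\Lambda_0$. Finally, if $\Lambda=\Lambda'$ then $\phi$ is an order isomorphism between two clubs of the single $\sigma$-complete directed set $\Lambda$, so Lemma~\ref{Lem:phi=id} furnishes a club $\Lambda_{00}\subseteq\Lambda_0\cap\Lambda_0'$ with $\phi\rs_{\Lambda_{00}}=\id$; replacing both clubs by $\Lambda_{00}$ gives $\Phi[A_\lambda]=B_\lambda$ there, so one may take $\Lambda_0=\Lambda_0'$ and $\phi=\id$.

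The step needing genuine care is the interleaving passage to the limit: one must arrange the two chains so that the inclusions $\Phi[A_{\lambda_n}]\subseteq B_{\lambda_n'}\subseteq\Phi[A_{\lambda_{n+1}}]$ force the unions to coincide, and then argue that $\sigma$-completeness of the families plus the isometry of $\Phi$ upgrades this union equality to the equality of \emph{closed} algebras $\Phi[A_\mu]=B_{\mu'}$. Everything else (monotonicity, bijectivity of $\phi$, the symmetric argument, and the reduction to $\phi=\id$) is bookkeeping once this limit step is in place.
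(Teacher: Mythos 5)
Your proof is correct and follows essentially the same route as the paper's: the same set $\Lambda_0=\{\lambda : \Phi[A_\lambda]=B_{\lambda'}\text{ for some }\lambda'\}$, the same interleaved sequences via Lemma~\ref{L:s-comp union} with suprema handled by $\sigma$-completeness, and the same final appeal to Lemma~\ref{Lem:phi=id} when $\Lambda=\Lambda'$. The only differences are expository — you spell out closedness and the identification $\Lambda_0'=\phi[\Lambda_0]$, which the paper leaves implicit.
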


\begin{proof}
Let $\Lambda_0$ be the set of all $\lambda\in \Lambda$ such that
there exists $\lambda'\in \Lambda'$ with $\Phi[A_\lambda] =
B_{\lambda'}$. Similarly we define $\Lambda'_0 \subseteq \Lambda'$ as
the set of all $\lambda'\in \Lambda'$ such that there is $\lambda\in
\Lambda$ with $\Phi^{-1}[B_{\lambda'}]=A_\lambda$.
Then there exists an order isomorphism
$\phi\colon \Lambda_0 \to \Lambda_0'$ such that $\Phi[A_\lambda] =
B_{\phi(\lambda)}$ for all $\lambda \in \Lambda_0$. We are going to
show that $\Lambda_0 \subseteq \Lambda$ is a club. It is clear that
$\Lambda_0$ is closed. Take $\lambda \in \Lambda$. Since
$A_{\lambda}$ is separable, there exists $\lambda_1' \in \Lambda'$
such that $\Phi[A_\lambda] \subseteq B_{\lambda_1'}$ by
Lemma~\ref{L:s-comp union}. By the same reason, there exists
$\lambda_1 \in \Lambda$ such that $\Phi^{-1}[B_{\lambda_1'}]
\subseteq A_{\lambda_1}$. Then we have $A_\lambda \subseteq
A_{\lambda_1}$. In this way, we can find sequences
\begin{align*}
A_\lambda &\subseteq A_{\lambda_1} \subseteq
A_{\lambda_2} \subseteq A_{\lambda_3} \subseteq \cdots \\
&B_{\lambda_1'} \subseteq B_{\lambda_2'} \subseteq
B_{\lambda_3'} \subseteq \cdots
\end{align*}
such that $B_{\lambda_{n}'} \subseteq \Phi[A_{\lambda_n}]$
and $\Phi[A_{\lambda_n}] \subseteq B_{\lambda_{n+1}'}$
for $n=1,2,\ldots$.
Let $\lambda_0 \in \Lambda$
and $\lambda'_0 \in \Lambda'$ be the supremums
of $\{\lambda_n\}_{n=1}^\infty$ and $\{\lambda'_n\}_{n=1}^\infty$.
Then we have
$A_{\lambda_0} = \overline{\bigcup_{n=1}^\infty A_{\lambda_n}}$
and $B_{\lambda'_0} = \overline{\bigcup_{n=1}^\infty B_{\lambda'_n}}$.
Since $\Phi[A_{\lambda_0}] = B_{\lambda_{0}'}$,
we get $\lambda_0 \in \Lambda_0$.
This shows that $\Lambda_0$ is cofinal,
and hence it is a club.
Similarly $\Lambda'_0 \subseteq \Lambda'$ is a club.
This shows the former assertion.
The latter assertion follows from Lemma~\ref{Lem:phi=id}.
\end{proof}

\begin{lemma}\lbl{L.AF-reflection}
A C*-algebra $A$ is LF 
if and only if it has a $\sigma$-complete
directed family of 
separable AF algebras with dense union.
\end{lemma}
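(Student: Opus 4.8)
The plan is to prove both implications, with the nontrivial content concentrated in the forward direction, where I will manufacture enough separable AF subalgebras by a closing-off argument and then invoke Bratteli's theorem to pass from LF to AF in the separable setting.

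For the easy direction, suppose $A$ carries a $\sigma$-complete directed family $\{A_\lambda\}_{\lambda\in\Lambda}$ of separable AF algebras with dense union, and let me check that $A$ is LF. Given a finite $\F\subseteq A$ and $\e>0$, the subalgebra generated by $\F$ is separable, so by Lemma~\ref{L:s-comp union} it is contained in some $A_\lambda$. As $A_\lambda$ is AF it is in particular LF, so there is a finite-dimensional $D\subseteq A_\lambda\subseteq A$ with $\F\subseteq_\e D$. Hence $A$ is LF; note that this direction uses only the trivial implication that AF implies LF.

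For the forward direction, the key lemma I would establish is a reflection statement: if $A$ is LF, then every separable subalgebra $B_0\subseteq A$ is contained in a separable LF subalgebra $B\subseteq A$. To build $B$, I would run a closing-off recursion producing separable subalgebras $B_0\subseteq B_1\subseteq\cdots$. Fixing a countable dense set $S_n\subseteq B_n$, for each of the countably many pairs $(\F,\e)$ with $\F\subseteq S_n$ finite and $\e\in\bbQ$ positive, the LF property of $A$ yields a finite-dimensional $D_{\F,\e}\subseteq A$ with $\F\subseteq_\e D_{\F,\e}$; I let $B_{n+1}$ be the separable subalgebra generated by $B_n$ together with countable dense subsets of all these $D_{\F,\e}$. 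Then $B:=\overline{\bigcup_n B_n}$ is separable, and a standard $3\e$-argument using density shows that $B$ is LF: any finite subset of $B$ is approximated by a finite $\F\subseteq S_n$ for some $n$, which is approximated within $B_{n+1}\subseteq B$ by the finite-dimensional $D_{\F,\e}$ (and $D_{\F,\e}\subseteq B$ since $B$ is closed and contains a dense subset of it). By Bratteli's theorem \cite[Theorem~2.2]{Bratt:Inductive}, the separable LF algebra $B$ is AF.

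Finally I would take $\Lambda$ to be the set of all separable AF subalgebras of $A$, ordered by inclusion, and verify the three required properties, just as in Lemma~\ref{L:s-comp family}. Dense union and directedness are immediate from the reflection lemma: every element, and every pair of members, generates a separable subalgebra that the lemma places inside a separable AF subalgebra. For $\sigma$-completeness I must check that the closure $C$ of a countable directed union of separable AF subalgebras is again separable AF; here $C$ is clearly separable, is LF by the same $3\e$-argument (any finite subset is approximated inside a single member of the directed family, which is itself LF), and is therefore AF by Bratteli's theorem again. I expect the main obstacle to be the bookkeeping in the closing-off recursion, namely ensuring that the finite-dimensional algebras produced by the LF property actually land inside the separable subalgebra $B$ while keeping each $B_n$ separable, together with the repeated reliance on Bratteli's theorem to convert the separable LF algebras so obtained into AF algebras.
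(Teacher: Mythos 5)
Your proof is correct and follows essentially the same route as the paper: the paper takes the $\sigma$-complete family of all separable subalgebras (Lemma~\ref{L:s-comp family}) and shows the set of indices whose algebras are LF is a club, which amounts exactly to your two verifications---the reflection/closing-off step (cofinality) and the closure-of-countable-directed-unions step ($\sigma$-completeness)---combined with Bratteli's theorem to convert separable LF into AF. The only difference is presentational: the paper compresses your explicit recursion into ``this is easy to see,'' while you spell out the bookkeeping.
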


\begin{proof}
We only need to prove the direct implication. 
We see that $A$ has a $\sigma$-complete directed family 
of separable subalgebras $\{A_\lambda\}_{\lambda \in \Lambda}$ 
with dense union by Lemma~\ref{L:s-comp family}. 
Since by \cite[Theorem~2.2]{Bratt:Inductive}
every separable LF algebra is AF, 
it suffices to show that the set $\Lambda_0$ 
of all $\lambda \in \Lambda$ such that $A_\lambda$ is LF is a club. 
Clearly $\Lambda_0$ is closed. 
To show that $\Lambda_0$ is cofinal, 
it suffices to see that for any separable subalgebra $A_0$ of $A$, 
there exists a separable subalgebra $A_0'$ containing $A_0$
such that for any finite subset $\F$ of $A_0$ and any $\e > 0$, there
exists a finite-dimensional subalgebra $M$ of $A_0'$ with $\F \subseteq_\e M$.
This is easy to see.
\end{proof}

In the same way, one can show that a C*-algebra $A$ is LM 
if and only if it has a $\sigma$-complete directed family 
of separable AM subalgebras with dense union.

\begin{remark} \label{R.K-0.reflection}
Lemma~\ref{L.AF-reflection} is just a special case of the downward
L\"o\-wen\-heim--Skolem theorem for logic of metric structures
(\cite{BYBHU}, or \cite{FaHaSh:Model2} for a version suitable for study of C*-algebras
and II$_1$ factors). Similar arguments have been used by C*-algebraists to
reflect properties of nonseparable algebras to separable subalgebras
(see \cite[II.8.5]{Black:Operator}) such as for example simplicity or
the existence of the unique trace.
\end{remark}

\subsection{Tensor products}

In this subsection, 
we give a definition and some properties 
of tensor products of C*-algebras. 
We try to avoid using results on nuclear C*-algebras 
as much as possible. 
In fact, we use the nuclearity only in Proposition~\ref{P.UHF-criterion}
(and Lemma~\ref{Lem:comple}) 
which is used in the proof of Proposition~\ref{P.B-X}~(3).
We are interested in 
tensor products of possibly uncountably many unital C*-algebras, 
and for this purpose the maximal tensor products are 
easier to treat than the minimal ones. 
We remark that we mainly deal with nuclear C*-algebras 
for which there is no distinction 
between the minimal tensor products and the maximal ones. 

\begin{definition}\label{Def:mutuallycommutes}
A family $\{A_x\}_{x \in X}$ of subalgebras 
of a C*-algebra $A$ is said to \emph{mutually commute}
if for distinct $x,y \in X$, 
every element of $A_x$ commutes with every element of $A_y$. 
\end{definition}

\begin{definition}\label{Def:tensor}
For a family $\{A_x\}_{x \in X}$ of unital C*-algebras,
its (maximal) \emph{tensor product} $\bigotimes_{x \in X}A_x$
is the C*-algebra having (an isomorphic copy of)  $A_x$ as unital subalgebras 
for $x \in X$ satisfying the following two properties: 
\begin{enumerate}
\item the family $\{A_x\}_{x \in X}$ of subalgebras 
of $\bigotimes_{x \in X}A_x$ mutually commutes, 
and its union $\bigcup_{x \in X}A_x$
generates $\bigotimes_{x \in X}A_x$. 
\item for a unital C*-algebra $B$ 
and a family $\{\varphi_x\}_{x \in X}$ 
of unital $*$-ho\-mo\-mor\-phisms $\varphi_x \colon A_x \to B$ 
such that $\{\varphi_x[A_x]\}_{x \in X}$ is 
a mutually commuting family of unital subalgebras of $B$, 
there exists a unital $*$-ho\-mo\-mor\-phism 
$\varphi\colon \bigotimes_{x \in X}A_x \to B$ 
such that $\varphi \rs_{A_x} = \varphi_x$ for all $x \in X$. 
\end{enumerate}
When $A_x = A$ for all $x \in X$,
we simply write $\bigotimes_X A$ for $\bigotimes_{x \in X}A_x$.
\end{definition}

It is not difficult to see that 
the tensor product exists and is unique. 
A nice exposition of tensor products of
C*-algebras can be found e.g., in \cite{BrOz}.
The condition (2) is called the universal property 
of the tensor product. 
A nice exposition of universal C*-algebras can be found 
e.g., \cite[II.8.3]{Black:Operator}.

Let $A$ and $B$ be unital C*-algebras. 
Since we consider $A$ and $B$ as unital subalgebras of $A \otimes B$, 
each $a \in A$ and each $b \in B$ are considered as
elements of $A \otimes B$. 
Thus the product $ab \in A \otimes B$ makes sense 
whereas this element is usually denoted by $a \otimes b \in A \otimes B$.
Similarly, for a family $\{A_x\}_{x \in X}$ of unital C*-algebras 
and a finite family $\{a_x\}_{x \in Y}$ of elements 
with $a_x \in A_x$ for $x \in Y \subseteq X$, 
we denote by $\prod_{x \in Y}a_x \in \bigotimes_{x \in X}A_x$ 
the product of $\{a_x\}_{x \in Y}$. 
Note that this product does not depend on the order of multiplications 
because the family $\{a_x\}_{x \in Y}$ in $\bigotimes_{x \in X}A_x$ 
mutually commutes.

The referee pointed out that the version of the next lemma 
when $A$ is assumed to be nuclear and simple instead of LM is true
(cf.\ \cite[Corollary~9.4.6]{BrOz}). 
Since one can prove that LM algebras are nuclear and simple, 
this gives a proof of this lemma. 
We give an elementary proof for the reader's convenience.

\begin{lemma}\label{Lem:LM times B}
Let $A$ and $B$ be unital subalgebras of a unital C*-algebra $D$ 
commuting with each other. 
If $A$ is LM, then the natural map from $A\otimes B$ 
to the C*-subalgebra $C^*(A\cup B)$ of $D$ 
generated by $A\cup B \subseteq D$ 
is an isomorphism.
\end{lemma}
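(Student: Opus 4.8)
The plan is to produce the natural map via the universal property of the tensor product and then prove it is isometric by approximating $A$ from inside by full matrix subalgebras, where the commutation with $B$ is controlled by explicit matrix units. Since $A$ and $B$ are mutually commuting unital subalgebras of $D$, Definition~\ref{Def:tensor}(2) yields a unital $*$-homomorphism $\pi\colon A\otimes B\to D$ with $\pi\rs_A=\id_A$ and $\pi\rs_B=\id_B$, whose range is exactly $C^*(A\cup B)$ because $A\cup B$ generates $A\otimes B$. As a $*$-homomorphism $\pi$ is contractive, so it suffices to show $\|\pi(x)\|=\|x\|$ for $x$ in the dense algebraic tensor product, i.e.\ for finite sums $x=\sum_{i=1}^n a_i\otimes b_i$ with $a_i\in A$ and $b_i\in B$.

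The heart of the argument is the following special case. Let $M\subseteq A$ be a unital full matrix subalgebra, so $M\cong M_k(\bbC)$ with matrix units $e_{ij}$ and $1_M=1_A$. I claim that both the inclusion-induced map $\iota\colon M\otimes B\to A\otimes B$ and the restriction $\pi\rs_{M\otimes B}$ are isometric. Writing a general element of $M\otimes B\cong M_k(B)$ as $\sum_{ij}e_{ij}\otimes b_{ij}$ and using that every element of $B$ commutes with $M$, one has the compression identity $e_{pi}\bigl(\sum_{i'j'}e_{i'j'}\otimes b_{i'j'}\bigr)e_{jp}=e_{pp}\otimes b_{ij}$; summing over $p$ recovers $b_{ij}$. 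Hence any $*$-homomorphism out of $M\otimes B$ that is injective on $1_M\otimes B\cong B$ has trivial kernel, so is injective and therefore isometric. Both $\iota$ and $\pi\rs_{M\otimes B}$ are injective on $B$, since $B$ embeds unitally into $A\otimes B$ by Definition~\ref{Def:tensor} and $B\subseteq D$; thus both are isometric. In particular $\|\pi(y)\|=\|y\|_{M\otimes B}=\|y\|_{A\otimes B}$ for every $y\in M\otimes B$.

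Now the approximation step uses that $A$ is LM. Given $x=\sum_{i=1}^n a_i\otimes b_i$ and $\e>0$, apply the definition of LM to $\F=\{a_1,\dots,a_n\}$ to obtain a unital full matrix subalgebra $M\subseteq A$ and elements $a_i'\in M$ with $\|a_i-a_i'\|<\e$. Set $x'=\sum_i a_i'\otimes b_i\in M\otimes B$. Then $\|x-x'\|_{A\otimes B}\le \e\sum_i\|b_i\|$ and $\|\pi(x)-\pi(x')\|\le \e\sum_i\|b_i\|$, since $\pi$ is contractive. Combining this with the identity $\|\pi(x')\|=\|x'\|_{A\otimes B}$ from the previous paragraph gives $\|\pi(x)\|\ge \|x\|_{A\otimes B}-2\e\sum_i\|b_i\|$. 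Letting $\e\to 0$ yields $\|\pi(x)\|=\|x\|$, so $\pi$ is isometric on a dense set, hence everywhere, and therefore an isomorphism onto $C^*(A\cup B)$.

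The main obstacle is the special case of the second paragraph: one must know that $M\otimes B$ carries a unique C*-norm that is simultaneously respected by the inclusion into $A\otimes B$ and by evaluation into $D$. The matrix-unit compression argument makes this elementary and, as the paper prefers, avoids invoking nuclearity of $A$. A minor technical point to record is that the full matrix subalgebras furnished by LM are taken to be unital (with $1_M=1_A$), which is the convention in the definition used here; this is what lets $1_M\otimes B$ be identified with $B$ in the injectivity argument.
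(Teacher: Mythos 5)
Your proof is correct and follows essentially the same route as the paper's: the matrix-unit compression identity settles the case of a unital full matrix subalgebra $M$, and the LM approximation combined with contractivity of $\pi$ upgrades this to an isometry on all of $A\otimes B$, with your explicit check that the special case applies on both sides (inside $A\otimes B$ via $\iota$ and inside $D$ via $\pi$) being precisely what the paper leaves implicit. One small caution: the paper's definition of LM does not literally furnish \emph{unital} matrix subalgebras, but since $A$ is unital this is arranged by adding $1_A$ to the finite set and observing that the unit of $M$ is a projection within distance less than $1$ of $1_A$, hence equal to $1_A$ --- the same silent step the paper's proof takes.
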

 
\begin{proof}
We first show the statement 
in the case that $A$ is a full matrix algebra $M_n(\bbC)$. 
Let $\{e_{i,j}\}_{i,j=1}^n$ be 
a matrix unit of $A \cong M_n(\bbC)$. 
Then every element of $A\otimes B$ can be written as 
$\sum_{i,j=1}^n e_{i,j} b_{i,j}$ 
for $b_{i,j}\in B$. 
In $C^*(A\cup B) \subseteq D$, 
we have 
\[
b_{i',j'}
= \sum_{k=1}^n e_{k,i'}
\Big( \sum_{i,j=1}^n e_{i,j} b_{i,j} \Big) e_{j',k}
\]
for $i',j' = 1,2,\ldots, n$. 
Hence if an element $\sum_{i,j=1}^n e_{i,j} b_{i,j} \in A\otimes B$ 
is sent to $0\in D$ by the natural map $A\otimes B \to D$, 
then $b_{i,j} = 0$ for all $i,j$ 
which implies $\sum_{i,j=1}^n e_{i,j} b_{i,j} = 0$ in $A\otimes B$. 
Thus when $A$ is a full matrix algebra, 
the natural map $A\otimes B \to C^*(A\cup B)$ is injective, 
and hence an isomorphism. 

Now suppose that $A$ is LM. 
Let $\pi\colon A\otimes B \to C^*(A\cup B)$ 
be the natural map. 
Take $x \in A\otimes B$. 
Take $\e>0$ arbitrarily. 
Then there exist $a_1,a_2,\ldots,a_n \in A$ 
and $b_1,b_2,\ldots,b_n \in B$ 
such that
\[
\Big\| x - \sum_{i=1}^n a_ib_i \Big\| < \e. 
\]
Since $A$ is LM, 
we may assume (by perturbing $a_i$'s slightly if necessarily) 
that $a_1,a_2,\ldots,a_n \in M$ 
for some unital full matrix subalgebra $M$ of $A$. 
Then by the first part of the proof, 
we have 
\[
\Big\| \sum_{i=1}^n a_ib_i \Big\|
= \Big\| \pi \Big( \sum_{i=1}^n a_ib_i \Big) \Big\|. 
\]
Hence we get 
\[
\big| \| x \| - \|\pi(x)\| \big| < 2 \e. 
\]
Since $\e$ was arbitrary, we have $\|x \| = \|\pi(x)\|$. 
This shows that 
the natural map $\pi\colon A\otimes B \to C^*(A\cup B)$ is injective, 
and hence an isomorphism. 
\end{proof}

We take advantage of   Lemma~\ref{Lem:LM times B}
and use  the notation $A \otimes B$ whenever it is justified by this lemma. 
Note that this lemma is false if we replace LM by LF. 
To see this, just consider $A=B=D=\bbC \oplus \bbC$. 
For a family $\{A_x\}_{x \in X}$ of unital C*-algebras, 
and unital subalgebras $D_x \subseteq A_x$, 
we sometimes denote by $\bigotimes_{x \in X} D_x$ the 
subalgebra of $\bigotimes_{x\in X} A_x$ generated by 
the mutually commuting family $\{D_x\}_{x \in X}$ 
of unital subalgebras of $\bigotimes_{x\in X} A_x$. 
In fact, this unital subalgebra is the image of 
the $*$-homomorphism from the tensor product $\bigotimes_{x \in X} D_x$ 
to $\bigotimes_{x\in X} A_x$, 
but no confusion should arise. 

We use the following well-known fact without mentioning.
We give its proof for the reader's convenience.

\begin{lemma}
Let $A$ and $B$ be unital C*-algebras,
and $A_0 \subseteq A$ and $B_0 \subseteq B$ be
unital subalgebras.
Then we have $(A_0\otimes B_0) \cap B = B_0$
in $A\otimes B$.
\end{lemma}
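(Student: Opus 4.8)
The plan is to prove the two inclusions separately; one is immediate, and the other rests on producing a suitable slice map $L_\phi\colon A\otimes B\to B$.

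The inclusion $B_0\subseteq (A_0\otimes B_0)\cap B$ is trivial: $B_0$ is a unital subalgebra of $B$, and it is one of the two generating subalgebras of $A_0\otimes B_0$, so $B_0\subseteq A_0\otimes B_0$ as well. For the reverse inclusion the key tool is a contractive linear map $L_\phi\colon A\otimes B\to B$ satisfying $L_\phi(ab)=\phi(a)b$ for $a\in A$ and $b\in B$, where $\phi$ is a fixed state on $A$ (such a state exists since $A$ is unital and nonzero). Conceptually $L_\phi$ is the map $\phi\otimes\id_B$, but to keep the argument self-contained and in the spirit of the preceding lemmas I would construct it by hand as follows.

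Fix a faithful unital representation $\sigma\colon B\to B(K)$ and the GNS representation $(\pi_\phi,H_\phi,\xi)$ of $A$ associated to $\phi$, so that $\pi_\phi$ is unital, $\|\xi\|=1$, and $\phi(a)=\langle\pi_\phi(a)\xi,\xi\rangle$. The representations $a\mapsto\pi_\phi(a)\otimes 1_K$ and $b\mapsto 1_{H_\phi}\otimes\sigma(b)$ are unital and have commuting ranges in $B(H_\phi\otimes K)$, so the universal property of the tensor product (Definition~\ref{Def:tensor}(2)) yields a unital $*$-homomorphism $\rho\colon A\otimes B\to B(H_\phi\otimes K)$ with $\rho(ab)=\pi_\phi(a)\otimes\sigma(b)$. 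Let $V\colon K\to H_\phi\otimes K$ be the isometry $V\eta=\xi\otimes\eta$. A direct computation gives $V^*\rho(ab)V=\phi(a)\sigma(b)$, so $u\mapsto V^*\rho(u)V$ is a contraction that sends each product $ab$, and hence every element of the dense $*$-subalgebra generated by $A\cup B$, into $\sigma(B)$; since $\sigma$ is isometric its image $\sigma(B)$ is closed, so by continuity the contraction sends all of $A\otimes B$ into $\sigma(B)$. Composing with $\sigma^{-1}$ produces the desired contractive $L_\phi$.

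With $L_\phi$ in hand the conclusion is short. On products $a_0 b_0$ with $a_0\in A_0$ and $b_0\in B_0$ we have $L_\phi(a_0 b_0)=\phi(a_0)b_0\in B_0$, so by linearity and continuity of $L_\phi$ together with closedness of $B_0$, the map $L_\phi$ carries $A_0\otimes B_0$ into $B_0$. Now take $x\in (A_0\otimes B_0)\cap B$ and regard it as an element $b\in B$. On the one hand $L_\phi(x)=L_\phi(b)=\phi(1)b=b$ since $\phi$ is a state; on the other hand $L_\phi(x)\in B_0$ because $x\in A_0\otimes B_0$. Hence $b\in B_0$, which gives the remaining inclusion. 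The main obstacle is precisely the construction and well-definedness of $L_\phi$ on the maximal tensor product: one must assemble the two commuting representations into a representation of $A\otimes B$ (exactly the universal property already available) and verify that the compression $V^*\rho(\cdot)V$ lands in $\sigma(B)$ rather than merely in $B(K)$; everything after that is a routine density-and-continuity argument.
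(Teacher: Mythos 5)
Your proof is correct and takes essentially the same route as the paper's: both arguments hinge on the slice map $E\colon A\otimes B\to B$ with $E(ab)=\varphi(a)b$ for a state $\varphi$ on $A$, combined with the observations $E(b)=b$ for $b\in B$ and $E(A_0\otimes B_0)\subseteq B_0$. The only difference is that the paper simply asserts this map exists, while you carefully justify its well-definedness and contractivity on the maximal tensor product via the universal property, the GNS construction, and compression by the isometry $V$ --- a worthwhile verification that the paper leaves implicit.
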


\begin{proof}
Take a state $\varphi$ of $A$.
Define a linear map $E\colon A\otimes B \to B$
by $E(ab)=\varphi(a)b$ for $a \in A$ and $b \in B$.
Since $E(b) = b$ for $b \in B$ and $E(A_0\otimes B_0) \subseteq B_0$,
we get $(A_0\otimes B_0) \cap B \subseteq B_0$.
The inverse inclusion is easy to see.
\end{proof}

For two families $\{A_x\}_{x \in X_1}$ and $\{A_x\}_{x \in X_2}$
of unital C*-algebras,
the tensor product $\bigotimes_{x \in X_1 \amalg X_2}A_x$ is
naturally isomorphic to
\[
\Big(\bigotimes_{x \in X_1}A_x\Big) \otimes
\Big(\bigotimes_{x \in X_2}A_x\Big).
\]
We identify these two tensor products. In particular, we can and will consider
$\bigotimes_{x \in Y}A_x$ as a unital subalgebra of $\bigotimes_{x \in X}A_x$
for a subset $Y$ of $X$.
We use the convention that $\bigotimes_{x \in Y}A_x = \bbC $
for $Y = \emptyset$.
We remark that the subalgebra $\bigotimes_{x \in Y}A_x$ 
coincides with $\bigotimes_{x \in X}A_x$ for a subset $Y$ of $X$ 
if and only if $A_x = \bbC$ for all $x \in X \setminus Y$. 

The following is easy to see.

\begin{lemma}\label{Lem:tensor}
Let $\{A_x\}_{x \in X}$ be an infinite family of unital C*-algebras, 
and set $A=\bigotimes_{x\in X} A_x$. 
Then $\{\bigotimes_{x \in \lambda}A_x\}_{\lambda \in [X]^{\aleph_0}}$ 
is a $\sigma$-complete directed system of subalgebras of
$A$ with dense union. \qed
\end{lemma}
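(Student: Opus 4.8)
The plan is to verify the three defining conditions in turn: that $\{\bigotimes_{x \in \lambda}A_x\}_{\lambda \in [X]^{\aleph_0}}$ is a directed family of subalgebras, that it is $\sigma$-complete, and that its union is dense in $A$. Throughout I would use the identification (recorded in the text preceding the lemma) of $\bigotimes_{x \in Y}A_x$ with a unital subalgebra of $\bigotimes_{x \in X}A_x$ for $Y \subseteq X$, under which $Y \subseteq Y'$ entails $\bigotimes_{x \in Y}A_x \subseteq \bigotimes_{x \in Y'}A_x$.

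For the directed-family structure, the order on $[X]^{\aleph_0}$ is inclusion, and for $\lambda, \mu \in [X]^{\aleph_0}$ the set $\lambda \cup \mu$ is again a countable infinite subset of $X$ dominating both; the corresponding subalgebra contains both $\bigotimes_{x \in \lambda}A_x$ and $\bigotimes_{x \in \mu}A_x$, so the family is directed. (To match the paper's convention that $\lambda \mapsto \bigotimes_{x \in \lambda}A_x$ be injective I would note that one may harmlessly assume each $A_x \neq \bbC$, whence $\lambda \subsetneq \mu$ yields a strict inclusion of subalgebras by the remark following the tensor-product identification.)

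The heart of the argument is $\sigma$-completeness. First, $[X]^{\aleph_0}$ is $\sigma$-complete: a countable directed $Z \subseteq [X]^{\aleph_0}$ has supremum $\mu := \bigcup Z$, which is countable as a countable union of countable sets and infinite since its members are. It then remains to check $\bigotimes_{x \in \mu}A_x = \overline{\bigcup_{\lambda \in Z}\bigotimes_{x \in \lambda}A_x}$. The inclusion $\supseteq$ is clear since each $\lambda \subseteq \mu$. For $\subseteq$, I would observe that $\bigcup_{\lambda \in Z}\bigotimes_{x \in \lambda}A_x$ is already a $*$-subalgebra, because $Z$ is directed and so any finitely many of its elements lie in a common $\bigotimes_{x \in \lambda}A_x$; its closure is therefore a C*-subalgebra. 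Since every $x \in \mu$ lies in some $\lambda \in Z$, this closure contains $\bigcup_{x \in \mu}A_x$, which by Definition~\ref{Def:tensor}(1) generates $\bigotimes_{x \in \mu}A_x$; hence the closure is all of $\bigotimes_{x \in \mu}A_x$.

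Finally, density of $\bigcup_{\lambda \in [X]^{\aleph_0}}\bigotimes_{x \in \lambda}A_x$ in $A$ follows in the same spirit: by Definition~\ref{Def:tensor}(1) the $*$-algebra generated by $\bigcup_{x \in X}A_x$ is dense in $A$, and each of its elements, being a finite sum of finite products of elements of the $A_x$, involves only finitely many coordinates, hence lies in $\bigotimes_{x \in F}A_x$ for some finite $F \subseteq X$; enlarging $F$ to some $\lambda \in [X]^{\aleph_0}$ (possible as $X$ is infinite) places it in the union. I do not expect a genuine obstacle; the only point requiring care is verifying that the closed span of the coordinate algebras over $\mu$ reconstructs $\bigotimes_{x \in \mu}A_x$ exactly, which is precisely the generation clause of the tensor-product definition applied to the countable index set $\mu$.
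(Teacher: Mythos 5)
Your proof is correct: the paper itself states this lemma without proof (``The following is easy to see,'' followed by \verb|\qed|), and your verification of directedness, $\sigma$-completeness of $[X]^{\aleph_0}$ together with the generation clause of Definition~\ref{Def:tensor}(1), and density via finitely supported elements is exactly the routine argument the authors intend the reader to supply. Your side remark on injectivity of $\lambda \mapsto \bigotimes_{x\in\lambda}A_x$ (reducing to $A_x \neq \bbC$, via the paper's observation that $\bigotimes_{x\in\lambda}A_x = \bigotimes_{x\in\mu}A_x$ forces the omitted factors to be $\bbC$) is a legitimate tidying of a convention the paper glosses over, and does not affect the substance.
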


\begin{lemma} \lbl{L.tensor.density}
If $A=\bigotimes_{x\in X} A_x$, $X$ is infinite,
and each $A_x$ is separable and not $\bbC$,
then the character density $\chi(A)$ of $A$ is equal to $|X|$.
\end{lemma}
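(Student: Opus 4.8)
The plan is to establish the two inequalities $\chi(A) \le |X|$ and $\chi(A) \ge |X|$ separately.

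For the upper bound I would first record that $A$ is the closure of the directed union $\bigcup_{Y \in [X]^{<\aleph_0}} \bigotimes_{x \in Y} A_x$, where $[X]^{<\aleph_0}$ denotes the finite subsets of $X$. Indeed, $\bigcup_{x \in X} A_x$ generates $A$ by Definition~\ref{Def:tensor}, and the $*$-algebra it generates consists of finite sums of finite products of elements (and adjoints) drawn from finitely many factors, so each such element lies in some $\bigotimes_{x \in Y}A_x$ with $Y$ finite. Since every factor $A_x$ is separable, each finite tensor product $\bigotimes_{x \in Y} A_x$ is again separable, so I may fix a countable dense subset $E_Y \subseteq \bigotimes_{x \in Y} A_x$. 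As $X$ is infinite we have $|[X]^{<\aleph_0}| = |X|$, whence $\bigcup_{Y}E_Y$ is a dense subset of $A$ of cardinality at most $|X|\cdot\aleph_0 = |X|$. This gives $\chi(A) \le |X|$.

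For the lower bound, the case $|X| = \aleph_0$ is immediate: since every $A_x \neq \bbC$, the algebra $A$ is infinite-dimensional, so $\chi(A) \ge \aleph_0 = |X|$. I would therefore assume $|X| > \aleph_0$ and argue by contradiction, supposing that $S \subseteq A$ is dense with $|S| < |X|$. The key step is a support-reflection argument. For each $s \in S$ and each $n$, the density just established produces a finite set $Y_{s,n} \subseteq X$ with $s \in_{1/n} \bigotimes_{x \in Y_{s,n}} A_x$; setting $Y_s := \bigcup_n Y_{s,n}$, a countable subset of $X$, each approximant lies in the closed subalgebra $\bigotimes_{x \in Y_s} A_x$, and hence so does $s$. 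Putting $Y := \bigcup_{s \in S} Y_s$, I obtain $|Y| \le \max(|S|,\aleph_0) < |X|$ (using $|X| > \aleph_0$), and all of $S$, hence $A = \overline{S}$, is contained in $\bigotimes_{x \in Y} A_x$. Thus $\bigotimes_{x \in Y} A_x = \bigotimes_{x \in X} A_x$.

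The contradiction then comes from the remark immediately preceding this lemma: the equality $\bigotimes_{x \in Y} A_x = \bigotimes_{x \in X} A_x$ forces $A_x = \bbC$ for every $x \in X \setminus Y$. But $|Y| < |X|$ gives $X \setminus Y \neq \emptyset$, while $A_x \neq \bbC$ for all $x$ by hypothesis, which is absurd. Hence no dense subset of cardinality $< |X|$ exists, and combined with the upper bound this yields $\chi(A) = |X|$. I expect the lower bound to be the main obstacle: within it, the genuinely C*-algebraic input is the proper-containment fact (the cited remark), which is ultimately proved by a slice map built from a state on the omitted factors; the rest is the cardinal bookkeeping that shows every element of $A$ has countable support.
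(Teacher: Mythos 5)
Your proof is correct and takes essentially the same route as the paper: the upper bound comes from countable dense subsets of the factors, and the lower bound from the fact that every element has countable support, so any set of size $<|X|$ sits inside a proper closed subalgebra $\bigotimes_{x\in Y}A_x$ with $Y\subsetneq X$, which is impossible since $A_x\neq\bbC$ off $Y$. The only differences are bookkeeping: the paper cites Lemma~\ref{Lem:tensor} for countable supports where you re-derive it by direct approximation, and your explicit case split at $|X|=\aleph_0$ cleanly handles the countable case, which the paper's cardinality count glosses over (for finite $Z$ the union $\bigcup_{z\in Z}\lambda_z$ is countably infinite, not of cardinality $<\aleph_0$, though the conclusion is trivial there).
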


\begin{proof}
Fix a countable dense $C_x\subseteq A_x$ for each $x$. 
Their union has cardinality $|X|$ and generates $A$. 
This shows $\chi(A) \leq |X|$. 
Take a subset $Z\subseteq A$ with cardinality less than $|X|$. 
For each $z \in Z$, there exists $\lambda_z \in [X]^{\aleph_0}$ 
with $z \in \bigotimes_{x \in \lambda_z}A_x$ by Lemma~\ref{Lem:tensor}. 
Since the set $\bigcup_{z \in Z}\lambda_z \subseteq X$ 
has cardinality less than $|X|$, 
we can find $x \in X$ outside of this set. 
Since $A_x$ is not $\bbC$, $Z$ is not dense in $A$. 
Hence $\chi(A) = |X|$.
\end{proof}

For a unitary $u$ of a unital C*-algebra $A$, 
an automorphism $\Ad u$ on $A$ is defined 
by $\Ad u (a) =uau^*$ for $a \in A$.
Let $\{A_x\}_{x \in X}$ be a family of unital C*-algebras. 
By the universality, a family $\{\alpha_x\}_{x \in X}$ of automorphisms
$\alpha_x$ on $A_x$ determines 
the automorphism $\alpha$ on $\bigotimes_{x \in X}A_x$ 
with $\alpha\rs_{A_x}=\alpha_x$ 
which we denote by $\bigotimes_{x \in X}\alpha_x$. 
For a subset $Y \subseteq X$ and a family $\{\alpha_x\}_{x \in Y}$ 
of automorphisms $\alpha_x$ on $A_x$, 
we denote by $\bigotimes_{x \in Y}\alpha_x$ 
the automorphism $\bigotimes_{x \in X}\alpha_x$
of $\bigotimes_{x\in X} A_x$ 
where $\alpha_x = \id_{A_x}$ for $x \in X\setminus Y$.
For unitaries $u_x \in A_x$ for $x \in Y$, 
we get an automorphism $\bigotimes_{x \in Y} \Ad u_x$ 
on $A = \bigotimes_{x\in X} A_x$. 
When $Y$ is finite, 
we get $\bigotimes_{x \in Y} \Ad u_x = \Ad u$ 
where $u = \prod_{x \in Y}u_x \in A$, 
but in general, $\bigotimes_{x \in Y} \Ad u_x$ 
is not in the form $\Ad u$ for a unitary $u$ of $A$.

\subsection{Relative commutants}

For a subset $A$ of a C*-algebra $B$,
we denote by $Z_B(A)$ the \emph{relative commutant}
(or \emph{centralizer}) of $A$ inside $B$;
\[
Z_B(A) := \{ b\in B : \text{$ab =ba$ for all $a \in A$}\}
\]
which is a subalgebra of $B$ if $A$ is closed under the $*$-operation 
(for example if $A$ is a subalgebra). 
We avoid the common notation $A'\cap B$ for
$Z_B(A)$ in order to increase the readability of certain formulas.
For a subset $A$ of a C*-algebra $B$, 
we denote by $C^*(A)$ the subalgebra generated by $A$. 
Note that $Z_B(C^*(A))=Z_B(A)$ for a subset $A$ closed under the
$*$-operation.
We also note that $Z_B(A_1 \cup A_2)=Z_B(A_1) \cap Z_B(A_2)$.

\begin{lemma}\label{Lem:A'=B}
Let $A$ and $D$ be unital C*-algebras.
If $A$ is LM, then $Z_{A\otimes D}(A)=D$.
\end{lemma}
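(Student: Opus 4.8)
The inclusion $D \subseteq Z_{A\otimes D}(A)$ is immediate, since $A$ and $D$ commute inside $A\otimes D$ (a tensor product that makes sense by Lemma~\ref{Lem:LM times B}). All the content is in the reverse inclusion, and the plan is to reduce it to the case of a full matrix algebra and then invoke an averaging argument.

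First I would settle the matrix case: if $M \cong M_n(\bbC)$ with matrix units $\{e_{i,j}\}$, then $Z_{M\otimes D}(M) = D$. Writing a general element of $M\otimes D$ as $\sum_{i,j} e_{i,j} d_{i,j}$ with $d_{i,j}\in D$ and imposing commutation with every $e_{k,l}$ forces, upon comparing coefficients of the matrix units, that $d_{i,j}=0$ for $i\neq j$ and that all the $d_{i,i}$ coincide; hence the element equals $\big(\sum_i e_{i,i}\big)d = d \in D$.

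The key device for the general case is the contractive projection onto a relative commutant obtained by averaging over the unitary group. For a unital full matrix subalgebra $M \subseteq A$, viewed inside $A\otimes D$, the group $U(M)\cong U(n)$ is compact, and I would define $P_M(z) := \int_{U(M)} uzu^* \, du$ with respect to normalized Haar measure, as a Bochner integral in the Banach space $A\otimes D$. Translation invariance of Haar measure shows that $P_M(z)$ commutes with every element of $U(M)$, hence with $M$, so $\|P_M(z)\|\le\|z\|$, $P_M$ fixes $Z_{A\otimes D}(M)$ pointwise, and, since each $u\in M\subseteq A$ commutes with $D$, the restriction of $P_M$ to $M\otimes D$ lands in $Z_{M\otimes D}(M)=D$ by the matrix case.

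Finally I would combine these. Let $x \in Z_{A\otimes D}(A)$ and fix $\e>0$. Since $A\cup D$ generates $A\otimes D$ and $A$ is LM, exactly as in the proof of Lemma~\ref{Lem:LM times B} I can perturb a finite approximant of $x$ to obtain $y \in M\otimes D$ with $\|x-y\|<\e$ for a suitable unital full matrix subalgebra $M\subseteq A$. As $x$ commutes with $A\supseteq M$ we have $P_M(x)=x$, while $P_M(y)\in D$; therefore $\|x-P_M(y)\|=\|P_M(x-y)\|\le\|x-y\|<\e$. Since $\e$ was arbitrary and $D$ is closed in $A\otimes D$, this gives $x\in D$. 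The step I expect to be the main obstacle is the construction of $P_M$: checking that the Haar average is a well-defined contraction, that it fixes $x$ while projecting $M\otimes D$ into $D$. Once $P_M$ is available, the reduction via LM and the matrix computation are routine.
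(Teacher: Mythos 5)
Your proposal is correct and takes essentially the same route as the paper: the paper's proof also uses the LM property to perturb a finite approximant of $x$ into $M\otimes D$ for a unital full matrix subalgebra $M\subseteq A$ and then averages over the unitary group of $M$, the only cosmetic difference being that the paper evaluates the average via the explicit formula $E(ad)=\tr(a)d$ for $a\in M$, $d\in D$, whereas you route through the matrix-case identity $Z_{M\otimes D}(M)=D$ --- the same computation. The one step you flag as delicate (well-definedness and contractivity of the Haar average $P_M$, and the unital perturbation to $M$) is treated at exactly the same level of detail in the paper's own proof.
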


\begin{proof}
It is clear from the definition of tensor products
that $Z_{A\otimes D}(A) \supset D$.
Take $x_0 \in Z_{A\otimes D}(A)$.
For any $\e>0$,
there exist elements $a_1,\ldots,a_n \in A$
and $d_1,\ldots,d_n \in D$ such that
$\|x_0 - \sum_{i=1}^n a_id_i\| < \e$.
Since $A$ is LM,
we may assume that $a_1,\ldots,a_n$ are
in a full matrix unital subalgebra $M$ of $A$.
Let $E\colon A\otimes D \to A\otimes D$
be a contractive linear map defined by
 $E(x) = \int_{U}uxu^* du$
for $x \in A\otimes D$
where $du$ is the normalized Haar measure
on the unitary group $U$ of $M$.
Since $x_0 \in Z_{A\otimes D}(A)$,
we have $E(x_0) = x_0$.
For $a \in M$ and $d \in D$,
we have $E(ad)=\tr(a)d$
where $\tr\colon M \to \bbC$ is
the normalized trace.
Hence we have
$\|x_0 - \sum_{i=1}^n \tr(a_i)b_i\| < \e$.
This means that $x_0 \in_\e D$.
Since $\e$ was arbitrary, $x_0 \in D$.
Thus we get $Z_{A\otimes D}(A) \subseteq D$,
and therefore $Z_{A\otimes D}(A) = D$.
We are done.
\end{proof}

By letting $D = \bbC$ in the lemma above, 
we see that the center $Z_{A}(A)$ 
of an LM algebra $A$ is $\bbC$. 
Thus one can write the conclusion of Lemma~\ref{Lem:A'=B} 
as $Z_{A\otimes D}(A) = Z_{A}(A) \otimes D$. 
The referee pointed out that $Z_{A\otimes D}(A)=Z_A(A)\otimes D$ holds 
for minimal tensor products by \cite[Theorem~1]{HaWa}. 
Since one can prove that LM algebras are nuclear and satisfy 
$Z_{A}(A) = \bbC$, 
this gives an indirect proof of Lemma~\ref{Lem:A'=B}.

To prove Proposition~\ref{P.B-X}~(3), 
we need some facts on nuclear C*-algebras 
(Lemma~\ref{Lem:comple} and Proposition~\ref{P.UHF-criterion}). 
When we apply Proposition~\ref{P.UHF-criterion} 
in the proof of Proposition~\ref{P.B-X}~(3), 
we use the fact that a UHF algebra is a tensor product 
of separable nuclear C*-algebras
because full matrix algebras are nuclear.
A nice exposition of nuclearity of
C*-algebras can be found e.g., in \cite{BrOz}.

\begin{lemma}\label{Lem:comple}
Let $A$ and $D$ be unital C*-algebras, and $A_0$ a unital subalgebra of $A$.
Suppose that $D$ is nuclear. 
Then $Z_{A\otimes D}(A_0)=Z_A(A_0)\otimes D$.
\end{lemma}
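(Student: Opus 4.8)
The plan is to establish the two inclusions of $Z_{A\otimes D}(A_0)=Z_A(A_0)\otimes D$ separately, the nontrivial one resting on the slice map theorem for nuclear C*-algebras. The inclusion $Z_A(A_0)\otimes D \subseteq Z_{A\otimes D}(A_0)$ is the formal direction: it is enough to check it on the generating elements $cd$ with $c\in Z_A(A_0)$ and $d\in D$, and for any $a_0\in A_0$ one computes $a_0(cd)=(ca_0)d=c(a_0d)=c(da_0)=(cd)a_0$, using $c\in Z_A(A_0)$ and the fact that every element of $D$ commutes with all of $A$ (hence with $A_0$) inside $A\otimes D$. Since the elements of $A\otimes D$ commuting with $A_0$ form a closed subalgebra, this inclusion follows.

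For the reverse inclusion I would introduce, for each state $\phi$ of $D$, the right slice map $R_\phi:=\id_A\otimes\phi\colon A\otimes D\to A$, determined by $R_\phi(ad)=\phi(d)a$. The one elementary property I need to record is that $R_\phi$ is an $A$-bimodule map, $R_\phi(axb)=aR_\phi(x)b$ for $a,b\in A$ and $x\in A\otimes D$, which is immediate on elementary tensors and extends by linearity and continuity. Now take $x\in Z_{A\otimes D}(A_0)$. For every $a_0\in A_0$ and every state $\phi$ of $D$, the bimodule property together with $a_0x=xa_0$ gives $a_0R_\phi(x)=R_\phi(a_0x)=R_\phi(xa_0)=R_\phi(x)a_0$, so $R_\phi(x)\in Z_A(A_0)$. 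Since every bounded functional on $D$ is a linear combination of states and $Z_A(A_0)$ is a linear subspace, in fact $R_\phi(x)\in Z_A(A_0)$ for all $\phi\in D^*$.

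At this stage the plan is to invoke the slice map theorem (see, e.g., \cite{BrOz}): because $D$ is nuclear, for any C*-algebra $A$ and any C*-subalgebra $C\subseteq A$ one has $C\otimes D=\{x\in A\otimes D: R_\phi(x)\in C \text{ for all }\phi\in D^*\}$. Here $C=Z_A(A_0)$ is a genuine C*-subalgebra because $A_0$, being a subalgebra, is self-adjoint. Applying the theorem together with the computation above yields $x\in Z_A(A_0)\otimes D$, finishing the reverse inclusion. I would also remark that nuclearity of $D$ gives $A\otimes_{\max}D=A\otimes_{\min}D$, so there is no clash with the maximal-tensor-product convention used in the paper, and the slice map theorem (usually phrased for the minimal tensor product) applies directly.

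The main obstacle is precisely this appeal to the slice map theorem; everything else is formal. If one preferred to avoid citing it, the same conclusion follows by hand from nuclearity: approximate $\id_D$ in the point-norm topology by completely positive contractions of the form $\rho\circ\theta$ with $\theta\colon D\to M_n(\bbC)$ and $\rho\colon M_n(\bbC)\to D$ completely positive. Then $\id_A\otimes\theta$ carries $x$ into $Z_A(A_0)\otimes M_n(\bbC)$ by the elementary matrix case (each matrix entry is a slice $R_{\psi\circ\theta}(x)$ with $\psi\in (M_n(\bbC))^*$, hence lies in $Z_A(A_0)$ by the computation above), and $\id_A\otimes\rho$ sends this back into $Z_A(A_0)\otimes D$; passing to the limit in the closed set $Z_A(A_0)\otimes D$ gives $x\in Z_A(A_0)\otimes D$. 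Either way, the only non-formal ingredient is this standard consequence of the nuclearity of $D$.
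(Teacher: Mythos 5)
Your proposal is correct and takes essentially the same route as the paper: both prove the easy inclusion, use the slice maps $\id\otimes\omega$ and their $A$-bimodule property to show that every element of $Z_{A\otimes D}(A_0)$ has all its slices in $Z_A(A_0)$, and then invoke the slice map property of the triple $(D,A,Z_A(A_0))$, which holds because $D$ is nuclear (with the same remark that nuclearity identifies the maximal and minimal tensor products). Your appended hands-on argument via completely positive approximations through $M_n(\bbC)$ is a correct sketch of the standard proof of that slice map theorem, but it does not change the substance of the argument.
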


\begin{proof}
Clearly we have $Z_A(A_0)\otimes D\subseteq Z_{A\otimes D}(A_0)$.
Let
\[
F := \{c\in A\otimes D:
\text{$(\id \otimes \omega)(c)\in Z_A(A_0)$ for all $\omega \in D^*$}\}.
\]
For $a \in A \subseteq A\otimes D$ and $c \in A\otimes D$,
we have
\[
(\id \otimes \omega)(ac)=a(\id\otimes \omega)(c),\quad
(\id \otimes \omega)(ca)=(\id\otimes \omega)(c)a
\]
for all $\omega\in D^*$.
Hence we get $Z_{A\otimes D}(A_0)\subseteq F$.
We claim that $F=Z_A(A_0)\otimes D$.
This equality is usually referred to as the \emph{slice map property}
of the triple $(D,A,Z_A(A_0))$ (see \cite[Definition~12.4.3]{BrOz}).
Here we remark that 
the (maximal) tensor product considered in this paper 
coincides with the minimal one because $D$ is nuclear. 
By \cite[Theorem~12.4.4~(2)]{BrOz}
(see \cite[Definition~12.4.1]{BrOz} and
note nuclear$\Leftrightarrow$CPAP$\Rightarrow$SOAP),
the triple
$(D,A,Z_A(A_0))$ has the slice map property
because $D$ is nuclear.
Thus we have $Z_A(A_0)\otimes D=Z_{A\otimes D}(A_0)$.
\end{proof}

\begin{definition}
Let $A$ be a unital C*-algebra,
and $A_0$ a unital subalgebra of $A$.
We say that $A_0$ is \emph{complemented} in $A$
if $C^*(A_0\cup Z_A(A_0))=A$.
\end{definition}

In a tensor product $A = \bigotimes_{x \in X} A_x$
of unital C*-algebras $A_x$,
a subalgebra $A_Y = \bigotimes_{x \in Y} A_x$ is complemented
for every subset $Y$ of $X$.

\begin{prop} \label{P.UHF-criterion}
Let $A$ be a unital C*-algebra.
Suppose that there exists a unital C*-algebra $D$
such that $A \otimes D$ is
a tensor product of separable nuclear C*-algebras.
Then for a $\sigma$-complete directed system
$\{A_\lambda\}_{\lambda \in \Lambda}$
of separable subalgebras of $A$ with dense union,
there exists a club $\Lambda_0 \subseteq \Lambda$
such that for each $\lambda \in \Lambda_0$,
$A_\lambda$ is complemented in $A$.
\end{prop}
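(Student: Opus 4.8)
Write the hypothesis as $A\otimes D=\bigotimes_{x\in X}B_x$ with each $B_x$ separable and nuclear. For $\mu\in[X]^{\aleph_0}$ put $B_\mu:=\bigotimes_{x\in\mu}B_x$ and $B_{X\setminus\mu}:=\bigotimes_{x\in X\setminus\mu}B_x$; by Lemma~\ref{Lem:tensor} the family $\{B_\mu\}_{\mu\in[X]^{\aleph_0}}$ is a $\sigma$-complete directed family of separable subalgebras of $A\otimes D$ with dense union, and $C^*(B_\mu\cup B_{X\setminus\mu})=A\otimes D$ with $B_\mu$ and $B_{X\setminus\mu}$ commuting. Since $\bigotimes_x B_x$ is nuclear (an inductive limit of finite tensor products of nuclear algebras) and a tensor factor of a nuclear algebra is nuclear, $D$ is nuclear; hence Lemma~\ref{Lem:comple} gives $Z_{A\otimes D}(A_0)=Z_A(A_0)\otimes D$ for every unital subalgebra $A_0\subseteq A$. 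The plan is to build a single club $\Lambda_0\subseteq\Lambda$ on which the tensor decomposition of $A\otimes D$ is \emph{aligned} with the family $\{A_\lambda\}$, and then to read off complementedness from this alignment.

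First I would discard the $\lambda$ for which $A_\lambda$ is not unital: the set $\{\lambda:\bbC 1\subseteq A_\lambda\}$ is a club (cofinal by Lemma~\ref{L:s-comp union} applied to $\bbC 1$, and closed since $A_{\sup Z}=\overline{\bigcup_{\lambda\in Z}A_\lambda}$), so after intersecting I may assume every $A_\lambda$ is unital. Next I would check that $\{A_\lambda\otimes D\}_{\lambda\in\Lambda}$ is again a $\sigma$-complete directed family of (now possibly non-separable) subalgebras of $A\otimes D$ with dense union; this is routine from $A_{\sup Z}\otimes D=C^*\big((\bigcup_{\lambda\in Z}A_\lambda)\cup D\big)=\overline{\bigcup_{\lambda\in Z}(A_\lambda\otimes D)}$ together with the density of $\bigcup_\lambda A_\lambda$ in $A$. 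The crucial point is that Lemma~\ref{L:s-comp union} requires separability only of the subalgebra being absorbed, not of the members of the family, so it still lets me capture any separable subalgebra of $A\otimes D$ inside some $A_\lambda\otimes D$.

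Now set
\[
\Lambda_0:=\{\lambda\in\Lambda:\text{there is }\mu\in[X]^{\aleph_0}\text{ with }A_\lambda\subseteq B_\mu\subseteq A_\lambda\otimes D\}.
\]
I would show $\Lambda_0$ is a club by a back-and-forth between $\Lambda$ and $[X]^{\aleph_0}$. For cofinality, starting from any $\lambda_*$ I alternately enlarge $\lambda$ and $\mu$: given $\lambda_n$ I use Lemma~\ref{L:s-comp union} for $\{B_\mu\}$ to find $\mu_n\supseteq\mu_{n-1}$ with $A_{\lambda_n}\subseteq B_{\mu_n}$, and given $\mu_n$ I use Lemma~\ref{L:s-comp union} for $\{A_\lambda\otimes D\}$ to find $\lambda_{n+1}\succeq\lambda_n$ with $B_{\mu_n}\subseteq A_{\lambda_{n+1}}\otimes D$; passing to $\lambda_\infty=\sup_n\lambda_n$ and $\mu_\infty=\bigcup_n\mu_n$ yields $A_{\lambda_\infty}\subseteq B_{\mu_\infty}\subseteq A_{\lambda_\infty}\otimes D$. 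For closedness, given a countable directed $Z\subseteq\Lambda_0$ with witnesses $\mu^{(\lambda)}$, I take $\mu_*:=\bigcup_{\lambda\in Z}\mu^{(\lambda)}$: then $A_{\sup Z}\subseteq B_{\mu_*}$ is immediate, while for $B_{\mu_*}\subseteq A_{\sup Z}\otimes D$ I use the inclusion in \emph{generator form}, namely that each $B_x$ with $x\in\mu_*$ lies in some $B_{\mu^{(\lambda)}}\subseteq A_\lambda\otimes D\subseteq A_{\sup Z}\otimes D$, and the $B_x$ generate $B_{\mu_*}$. This is the step I expect to be the main obstacle: a naive projection of a club from a product index set need not be closed, and it is exactly the fact that the condition $B_\mu\subseteq A_\lambda\otimes D$ is witnessed on the generators $B_x$ that repairs closedness here.

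Finally, for $\lambda\in\Lambda_0$ I would verify that $A_\lambda$ is complemented. Choosing $\mu$ as in the definition, $A_\lambda\subseteq B_\mu$ forces $B_{X\setminus\mu}$ to commute with $A_\lambda$, so $B_{X\setminus\mu}\subseteq Z_{A\otimes D}(A_\lambda)=Z_A(A_\lambda)\otimes D$ by Lemma~\ref{Lem:comple}, while $B_\mu\subseteq A_\lambda\otimes D$. Therefore
\[
A\otimes D=C^*(B_\mu\cup B_{X\setminus\mu})\subseteq C^*\big(A_\lambda\cup Z_A(A_\lambda)\cup D\big),
\]
so $C^*(A_\lambda\cup Z_A(A_\lambda)\cup D)=A\otimes D$. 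Applying the slice map $\id_A\otimes\omega\colon A\otimes D\to A$ for a state $\omega$ of $D$ (a contractive map fixing $A$ pointwise and sending $a'd\mapsto a'\omega(d)$, obtained exactly as the conditional expectation in the proof of Lemma~\ref{Lem:A'=B}), and noting that it carries $C^*(A_\lambda\cup Z_A(A_\lambda)\cup D)$ into $C^*(A_\lambda\cup Z_A(A_\lambda))$, I conclude $C^*(A_\lambda\cup Z_A(A_\lambda))=A$; that is, $A_\lambda$ is complemented in $A$. Intersecting $\Lambda_0$ with the unital club from the second paragraph gives the required club.
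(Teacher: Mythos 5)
Your proof is correct, and its endgame is the same as the paper's: both arguments hinge on sandwiching $A_\lambda$ between a complemented separable subalgebra of $A\otimes D$ and $A_\lambda\otimes D$, obtaining nuclearity of $D$ as a tensor factor, and then applying Lemma~\ref{Lem:comple} to convert $Z_{A\otimes D}(A_\lambda)$ into $Z_A(A_\lambda)\otimes D$ and slice off $D$ (the paper does the last step silently, via the unnamed lemma $(A_0\otimes B_0)\cap B=B_0$; your explicit use of $\id_A\otimes\omega$ is the same device). Where you genuinely diverge is the combinatorial construction of the club. The paper fixes dense sets $X\subseteq A$ and $Y\subseteq D$ and runs the matching machinery of Proposition~\ref{Prop:find_club} twice --- once in $A\otimes D$ against the tensor-factor family, once in $A$ against $\{A_\lambda\}$ --- with Lemma~\ref{Lem:project clubs} mediating the passage from $[X\amalg Y]^{\aleph_0}$ to $[X]^{\aleph_0}$. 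You instead introduce the auxiliary family $\{A_\lambda\otimes D\}_{\lambda\in\Lambda}$, whose members are typically nonseparable, and correctly exploit that Lemma~\ref{L:s-comp union} requires separability only of the algebra being absorbed; the back-and-forth then yields the club $\Lambda_0=\{\lambda:\exists\mu,\ A_\lambda\subseteq B_\mu\subseteq A_\lambda\otimes D\}$ directly, and your generator-witnessed verification of closedness is exactly what substitutes for Lemma~\ref{Lem:project clubs} (you rightly flag that a naive projection of a club need not be closed --- that is the issue the paper's lemma exists to repair). Your route is more self-contained and avoids the isomorphism-matching apparatus entirely, at the cost of verifying by hand that $\{A_\lambda\otimes D\}$ is a $\sigma$-complete directed family with dense union; for full compliance with the paper's convention you should also note that $\lambda\mapsto A_\lambda\otimes D$ is injective, which follows from $(A_\lambda\otimes D)\cap A=A_\lambda$ via the slice map (harmless, since Lemma~\ref{L:s-comp union} never uses injectivity). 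Two further small points in your favor: your reduction to unital $A_\lambda$ addresses a hypothesis of Lemma~\ref{Lem:comple} that the paper glosses over, and your explicit complement $B_{X\setminus\mu}$ makes the commutation step more concrete than the paper's $Z_{A\otimes D}(C^*(\mu))\subseteq Z_{A\otimes D}(A_\lambda)$. What the paper's approach buys in exchange is reusability: Proposition~\ref{Prop:find_club} and Lemma~\ref{Lem:project clubs} are general-purpose tools invoked elsewhere in the paper, whereas your argument is bespoke to this proposition.
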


\begin{proof}
Fix a dense $X\subseteq A$ and a dense $Y\subseteq D$. 
Then $\{C^*(\mu)\}_{\mu \in [X \amalg Y]^{\aleph_0}}$ 
is a $\sigma$-complete directed family of 
separable subalgebras of $A\otimes D$ 
with dense union. 
Since $A \otimes D$ is a tensor product of separable C*-algebras,
$A \otimes D$ has a $\sigma$-complete directed system
of separable complemented subalgebras with dense union
by Lemma~\ref{Lem:tensor}.
Hence by Proposition~\ref{Prop:find_club},
there exists a club $C \subseteq [X \amalg Y]^{\aleph_0}$ such that
$C^*(\mu)$ is complemented in $A\otimes D$ for all
$\mu \in C$.
By Lemma~\ref{Lem:project clubs} 
there exists a club $C_0 \subseteq [X]^{\aleph_0}$
such that for every $\mu_0 \in C_0$ 
there exists $\mu \in C$ with $\mu_0 = \mu \cap X$. 
By Lemma~\ref{L:restriction to club}, 
$\{C^*(\mu_0)\}_{\mu_0 \in C_0}$ 
is a $\sigma$-complete directed family of separable subalgebras of $A$ 
with dense union. 
Hence by Proposition~\ref{Prop:find_club} applied with $\id\colon A\to A$, 
we get a club $\Lambda_0 \subseteq \Lambda$
such that for each $\lambda \in \Lambda_0$ 
there exists $\mu_0 \in C_0$ with $A_\lambda = C^*(\mu_0)$. 

It remains to prove that $A_\lambda$ is complemented in $A$ 
for every $\lambda \in \Lambda_0$. 
Take $\lambda \in \Lambda_0$. 
Then by the arguments above, 
there exists $\mu \in C$ 
such that $A_\lambda = C^*(\mu \cap X)$. 
Then $C^*(\mu)$ is complemented in $A\otimes D$, 
and we have $A_\lambda \subseteq C^*(\mu) \subseteq 
A_\lambda \otimes D$. 
Since $A \otimes D$ is a tensor product of nuclear C*-algebras,
$D$ is nuclear by \cite[Proposition~10.1.7]{BrOz}.
Hence we get $Z_A(A_\lambda)\otimes D=Z_{A\otimes D}(A_\lambda)$ 
by Lemma~\ref{Lem:comple}. 
Therefore we have 
\begin{align*}
A\otimes D
&=C^*\big(C^*(\mu) \cup Z_{A\otimes D}(C^*(\mu))\big)\\
&\subseteq C^*\big((A_\lambda \otimes D) \cup Z_{A\otimes D}(A_\lambda)\big) \\
&= C^*\big((A_\lambda \otimes D) \cup (Z_{A}(A_\lambda)\otimes D)\big) \\
&= C^*(A_\lambda \cup Z_{A}(A_\lambda)) \otimes D.
\end{align*}
This shows that $A_\lambda$ is complemented in $A$ 
and finishes the proof. 
\end{proof}

\section{LM but not UHF}\label{Sec:LMnotUHF}

Proposition~\ref{P.example.Z} gives examples of unital LM algebras that are not
UHF, answering part of \cite[Problem~8.1]{Dix:Some}. Recall that $\cZ$ is the
\emph{Jiang--Su algebra}. We shall need the following properties of $\cZ$
proved in \cite{JiSu:On}:
\begin{itemize}
\item $\cZ$ is a unital, separable C*-algebra which is not UHF.
\item $\bigotimes_{\aleph_0}\cZ \cong \cZ$.
\item $A\otimes \cZ \cong A$
for any infinite-dimensional separable UHF algebra $A$.
\end{itemize}

\begin{definition}
The UHF algebra $\bigotimes_{\aleph_0}M_2(\bbC)$
is called the \emph{CAR algebra}.
\end{definition}

\begin{prop}\label{P.example.Z}
For two sets $X$ and $Y$,
define $A_{X,Y} := \bigotimes_{X} M_2(\bbC)\otimes\bigotimes_{Y}\cZ$.
Suppose that $X$ is infinite.
Then we have the following.
\begin{enumerate}
\item $A_{X,Y}$ is a unital LM algebra with $\chi(A_{X,Y})=|X|+|Y|$.
\item $A_{X,Y}$ is UHF if and only if $|X|\geq |Y|$.
\item $A_{X,Y}\otimes D$ is UHF
for any UHF algebra $D$ with $\chi(D)\geq |Y|$.
\end{enumerate}
\end{prop}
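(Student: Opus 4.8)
The three listed properties of $\cZ$ yield in particular $\cZ\otimes\cZ\cong\cZ$ (hence $\bigotimes_F\cZ\cong\cZ$ for every finite nonempty $F$) and the absorption $B\otimes\cZ\cong B$ for every infinite-dimensional separable UHF algebra $B$; these are the facts I would use throughout, and the whole argument rests on one mechanism: a countably infinite block of matrix factors can swallow a $\cZ$. For (1), unitality is automatic. For local matriciality, given a finite $\F\subseteq A_{X,Y}$ and $\e>0$, I would first approximate $\F$ within a finitely supported subalgebra $\bigotimes_{F_X}M_2(\bbC)\otimes\bigotimes_{F_Y}\cZ$ with $F_X\subseteq X$, $F_Y\subseteq Y$ finite. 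Since $X$ is infinite I choose a countably infinite $S\subseteq X\setminus F_X$ and pass to $B:=\bigotimes_{F_X\cup S}M_2(\bbC)\otimes\bigotimes_{F_Y}\cZ$, which $\e$-contains $\F$. By absorption $\bigotimes_S M_2(\bbC)\otimes\bigotimes_{F_Y}\cZ\cong\bigotimes_S M_2(\bbC)$, so $B\cong\bigotimes_{F_X\cup S}M_2(\bbC)$ is separable UHF, in particular LM, and a full matrix subalgebra of $B$ that $\e$-contains the approximants is the desired one in $A_{X,Y}$. The equality $\chi(A_{X,Y})=|X|+|Y|$ is immediate from Lemma~\ref{L.tensor.density}, applied to the family indexed by $X\amalg Y$ of separable factors none of which is $\bbC$.

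Parts (2)(if) and (3) are the same computation. For (2) with $|X|\geq|Y|$ I would partition a subset of $X$ into countably infinite blocks $\{S_y\}_{y\in Y}$ with remainder $X'$ — possible since $|X|\geq|Y|\cdot\aleph_0=|Y|$ — regroup $A_{X,Y}\cong\bigotimes_{X'}M_2(\bbC)\otimes\bigotimes_{y\in Y}\bigl(\bigotimes_{S_y}M_2(\bbC)\otimes\cZ\bigr)$, and absorb each $\cZ$ into its block. Assembling these blockwise isomorphisms through the universal property of Definition~\ref{Def:tensor} gives $A_{X,Y}\cong\bigotimes_X M_2(\bbC)$, which is UHF. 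Part (3) is identical with $D=\bigotimes_{i\in I}M_{n_i}$ supplying the matrix blocks: since $\chi(D)=|I|\geq|Y|$, I partition $I$ into countably infinite blocks $\{T_y\}_{y\in Y}$ plus a remainder and absorb each $\cZ$ into $\bigotimes_{T_y}M_{n_i}$, yielding $A_{X,Y}\otimes D\cong\bigotimes_X M_2(\bbC)\otimes D$, again UHF. (When $Y$ is finite in either part this collapses to the single absorption $\bigotimes_X M_2(\bbC)\otimes\cZ\cong\bigotimes_X M_2(\bbC)$.)

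The substance is the converse in (2): if $|X|<|Y|$ then $A_{X,Y}$ is not UHF. The obstruction is not $K$-theoretic — $A_{X,Y}$ and $\bigotimes_X M_2(\bbC)$ share the same Elliott invariant since $\cZ$ has the invariant of $\bbC$ — so I would argue structurally, isolating two facts. First, a nonseparable UHF algebra $A\cong\bigotimes_{j\in J}M_{m_j}$ has the property that every separable subalgebra $C$ is contained in $\bigotimes_{j\in\sigma}M_{m_j}$ for some countable $\sigma$ (Lemma~\ref{Lem:tensor} and Lemma~\ref{L:s-comp union}), whence $Z_A(C)\supseteq\bigotimes_{j\in J\setminus\sigma}M_{m_j}$ contains a unital $M_{m_j}(\bbC)$ with $m_j\geq 2$, because $|J|=\chi(A)\geq\aleph_1$ forces $J\setminus\sigma\neq\emptyset$. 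Second, $\bigotimes_Y\cZ$ contains no unital $M_n(\bbC)$ with $n\geq 2$: such a copy would be separable, hence unitally contained in some $\bigotimes_{\sigma}\cZ\cong\cZ$, contradicting that $[1_\cZ]$ generates $K_0(\cZ)\cong\bbZ$ and is not $n$-divisible. When $X$ is countable these combine at once: $C:=\bigotimes_X M_2(\bbC)$ is separable and, being LM, satisfies $Z_{A_{X,Y}}(C)=\bigotimes_Y\cZ$ by Lemma~\ref{Lem:A'=B}, which violates the first fact; so $A_{X,Y}$ is not UHF.

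The main obstacle is the case of uncountable $X$ with $|X|<|Y|$, where $\bigotimes_X M_2(\bbC)$ is no longer separable and the relative commutant of any separable subalgebra still inherits matrix factors from the leftover of $X$, so the first fact is not directly contradicted. Here I would invoke the reflection machinery: assuming $A_{X,Y}$ is UHF, Proposition~\ref{P.UHF-criterion} (with $D=\bbC$) together with Proposition~\ref{Prop:find_club} produces a club of $\mu\in[X\amalg Y]^{\aleph_0}$ along which $A_\mu=\bigotimes_{\mu\cap X}M_2(\bbC)\otimes\bigotimes_{\mu\cap Y}\cZ$ is complemented and, via Lemma~\ref{Lem:A'=B} and Lemma~\ref{Lem:comple}, satisfies $Z_{A_{X,Y}}(A_\mu)=A_{X\setminus\mu,\,Y\setminus\mu}$; matching against the coordinate decomposition of the UHF structure forces each such relative commutant to be UHF again. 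Converting this into a genuine contradiction requires engineering a descent to a configuration with only countably many surviving matrix coordinates, where the clean argument above applies, and arranging that controlled descent — rather than the two facts themselves — is where the real work lies.
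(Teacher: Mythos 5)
Your parts (1), the forward direction of (2), and (3) are correct and essentially the paper's own argument (regroup into countably infinite matrix blocks and absorb each $\cZ$), and your treatment of the converse in (2) when $X$ is countable is a valid alternative to the paper's route: Lemma~\ref{Lem:A'=B} gives $Z_{A_{X,Y}}(\bigotimes_X M_2(\bbC))=\bigotimes_Y\cZ$, and your two facts (modulo discarding trivial factors $M_1=\bbC$, harmless since the nontrivial factors of a UHF algebra of character density $\geq\aleph_1$ are uncountably many) do produce a contradiction via projectionlessness of $\cZ$. The genuine gap is the case you flag yourself: uncountable $X$ with $|X|<|Y|$, which you leave at the level of ``where the real work lies.'' Moreover, the descent you sketch cannot work as stated: Proposition~\ref{P.UHF-criterion} and Proposition~\ref{Prop:find_club} operate on clubs of \emph{countable} index sets, and for $\mu\in[X\amalg Y]^{\aleph_0}$ the commutant $Z_{A_{X,Y}}(A_\mu)=A_{X\setminus\mu,\,Y\setminus\mu}$ has exactly the same pair of cardinalities $(|X|,|Y|)$ as before, so countable reflection goes in circles; and complementedness of $A_\mu$ by itself does not force $Z_{A_{X,Y}}(A_\mu)$ to be UHF. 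The reduction you need is not countable but of size $|X|$.

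That size-$|X|$ closing-off is precisely the paper's missing ingredient. Assuming $\Phi\colon A_{X,Y}\to\bigotimes_{z\in Z}M_z$ is an isomorphism, one recursively builds increasing sets $Y_k\subseteq Y$ and $Z_k\subseteq Z$ with $|Y_k|\leq|Z_k|=|X|$ and
\[
\bigotimes_{z\in Z_k}M_z\subseteq
\Phi\Big[\bigotimes_X M_2(\bbC)\otimes\bigotimes_{Y_k}\cZ\Big]
\subseteq\bigotimes_{z\in Z_{k+1}}M_z,
\]
using that each factor on the left has countable (indeed finite for the $M_2$'s) support in $Z$ and vice versa; taking unions gives $Y'\subseteq Y$, $Z'\subseteq Z$ with the exact match $\Phi\big[\bigotimes_X M_2(\bbC)\otimes\bigotimes_{Y'}\cZ\big]=\bigotimes_{z\in Z'}M_z$. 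Both sides are LM (the left by your part (1), the right being UHF), so Lemma~\ref{Lem:A'=B} applies to both inclusions and yields $\Phi\big[\bigotimes_{Y\setminus Y'}\cZ\big]=\bigotimes_{z\in Z\setminus Z'}M_z$, where $Y\setminus Y'$ is infinite because $|Y'|\leq|X|<|Y|$. Only now does countable reflection finish the job: Proposition~\ref{Prop:find_club} together with Lemma~\ref{Lem:tensor} produces countable $C\subseteq Y\setminus Y'$ and $C'\subseteq Z\setminus Z'$ with $\Phi\big[\bigotimes_{C}\cZ\big]=\bigotimes_{z\in C'}M_z$, so $\cZ\cong\bigotimes_C\cZ$ would be a separable UHF algebra, a contradiction (at this point your projectionlessness argument would serve equally well). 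So your proposal is complete except for the heart of the converse: the back-and-forth of size $|X|$ that swallows the entire matrix part into an initial block of the putative UHF decomposition, leaving a pure tensor power of $\cZ$.
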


\begin{proof}
Since $X$ is infinite, we can identify $A_{X,Y}$ with 
$\bigotimes_{X} A\otimes\bigotimes_{Y}\cZ$ 
where $A$ is the CAR algebra. 
For each $\lambda \in [X]^{\aleph_0}$ 
and $\lambda' \in [Y]^{\aleph_0}$ we set
\[
D_{\lambda,\lambda'}
:=\bigotimes_{\lambda} A\otimes\bigotimes_{\lambda'}\cZ
\subseteq A_{X,Y}
\]
Then $D_{\lambda,\lambda'}$ is the CAR algebra
for all $\lambda$ and $\lambda'$.
Since $\{D_{\lambda,\lambda'}\}$
is a $\sigma$-complete directed system with dense union,
we see that $A_{X,Y}$ is LM.
By Lemma~\ref{L.tensor.density},
we have $\chi(A_{X,Y})=|X|+|Y|$.
This shows (1).

By rearranging the factors,
we see that
$A_{X,Y}$ is UHF if $|X|\geq |Y|$
and that $A_{X,Y}\otimes D$ is UHF
for a UHF algebra $D$ with $\chi(D)\geq |Y|$.
It remains to show that $A_{X,Y}$ is UHF only if $|X|\geq |Y|$.
For the sake of obtaining a contradiction,
assume that $|X| < |Y|$ and $A_{X,Y}$ is UHF.
Let us denote by $A_x = M_2(\bbC)$ for $x \in X$ 
and $A_y = \cZ$ for $y \in Y$ 
the unital subalgebra of $A_{X,Y}$ 
so that $A_{X,Y} = \bigotimes_{x \in X} A_x\otimes\bigotimes_{y \in Y}A_y$.
Let $\Phi \colon A_{X,Y} \to \bigotimes_{z \in Z}M_z$ be an isomorphism 
where $Z$ is a set 
and $\{M_z\}_{z \in Z}$ is a family of full matrix algebras. 
For each $x \in X$, 
there exists a finite $F_x \subseteq Z$ 
such that $\Phi[A_x] \subseteq \bigotimes_{z \in F_x}M_z$. 
If we set $Z_1= \bigcup_{x \in X} F_x \subseteq Z$, 
then we get $|Z_1|=|X|$ and 
$\Phi\big[\bigotimes_{x \in X} A_x\big] 
\subseteq \bigotimes_{z \in Z_1}M_z$. 
Similarly, 
for each $z \in Z_1$ 
there exists a finite $G_z \subseteq Y$ 
such that $M_z \subseteq \Phi\big[\bigotimes_{x \in X} A_x 
\otimes\bigotimes_{y \in G_z}A_y\big]$. 
If we set $Y_1= \bigcup_{z \in Z_1} G_z \subseteq Y$, 
then we get $|Y_1| \leq |Z_1| = |X|$ and 
\[
\bigotimes_{z \in Z_1}M_z
\subseteq 
\Phi\Big[\bigotimes_{x \in X} A_x 
\otimes\bigotimes_{y \in Y_1}A_y\Big]. 
\]
Next for each $y \in Y_1$ 
there exists a countable $C_y \subseteq Z$ 
such that $\Phi[A_y] \subseteq \bigotimes_{z \in C_y}M_z$. 
If we set $Z_2= Z_1 \cup \bigcup_{y \in Y_1} C_y \subseteq Z$, 
then we get $Z_1 \subseteq Z_2$, $|Z_2|=|X|$ and 
\[
\Phi\Big[\bigotimes_{x \in X} A_x 
\otimes\bigotimes_{y \in Y_1}A_y\Big] 
\subseteq 
\bigotimes_{z \in Z_2}M_z. 
\]
Recursively, 
we can find increasing sequences 
$\{Y_k\}_{k=1}^n$ and $\{Z_k\}_{k=1}^n$ 
of subsets of $Y$ and $Z$, respectively, such that 
$|X \amalg Y_k| = |Z_k| = |X|$ and 
\[
\bigotimes_{z \in Z_k}M_z
\subseteq 
\Phi\Big[\bigotimes_{x \in X} A_x 
\otimes\bigotimes_{y \in Y_k}A_y\Big]
\subseteq 
\bigotimes_{z \in Z_{k+1}}M_z
\]
for $k=1,2,\ldots$. 
We set $Y' := \bigcup_{k=1}^\infty Y_k \subseteq Y$ 
and $Z' := \bigcup_{k=1}^\infty Z_k \subseteq Z$. 
Then we have $|X \amalg Y'| = |Z'| = |X|$ and 
\[
\Phi\Big[\bigotimes_{x \in X} A_x 
\otimes\bigotimes_{y \in Y'}A_y\Big] = 
\bigotimes_{z \in Z'}M_z. 
\]
Since $\bigotimes_{z \in Z'}M_z$ is UHF and hence LM, 
we have 
\begin{align*}
Z_{A_{X,Y}}\Big(\bigotimes_{x \in X} A_x 
\otimes\bigotimes_{y \in Y'}A_y\Big) &= 
\bigotimes_{y \in Y \setminus Y'}A_y,\\ 
Z_{\bigotimes_{z \in Z}M_z}\Big(\bigotimes_{z \in Z'}M_z \Big)
&= \bigotimes_{z \in Z \setminus Z'}M_z. 
\end{align*}
by Lemma~\ref{Lem:A'=B}. 
Thus we get 
$\Phi \big[\bigotimes_{y \in Y \setminus Y'}A_y\big]
=\bigotimes_{z \in Z \setminus Z'}M_z$.
Since $|Y'|\leq |X| < |Y|$, 
we see that $Y \setminus Y'$ is infinite. 
Hence $Z \setminus Z'$ is also infinite. 
By Proposition~\ref{Prop:find_club} 
and Lemma~\ref{Lem:tensor}, 
there exist $C \in [Y \setminus Y']^{\aleph_0}$ 
and $C' \in [Z \setminus Z']^{\aleph_0}$ 
such that 
$\Phi \big[\bigotimes_{y \in C}A_y\big]
=\bigotimes_{z \in C'}M_z$.
This is a contradiction 
because $\bigotimes_{y \in C}A_y \cong \cZ$ 
is not UHF. 
\end{proof}

We thank the referee who pointed out an error of a proof of 
Proposition~\ref{P.example.Z}~(3) in an earlier draft. 
By Proposition~\ref{P.example.Z}, the unital C*-algebra $A_{X,Y}$ is LM but not
UHF if $|X| < |Y|$. When $|X|=\aleph_0$ and $|Y|=\aleph_1$, we see that
$A_{X,Y}$ is AM by Theorem~\ref{Thm1}~(1). 
In the other case, we do not know whether $A_{X,Y}$ is AM or not.

\begin{problem} \label{Prob.AM?}
Let $X,Y$ be sets such that $\aleph_0 \leq |X| < |Y|$ and $\aleph_1<|Y|$.
Is $A_{X,Y} = \bigotimes_{X} M_2(\bbC)\otimes\bigotimes_{Y}\cZ$ AM?
\end{problem}

\section{AM but not UHF}\label{Sec:AMnotUHF}

In this section,
for each infinite set $X$
we define a unital AM-algebra $B_X$
with $\chi(B_X) = |X|$,
and show that
$B_X$, or even $B_X \otimes D$
for a unital C*-algebra $D$, is not UHF
when $|X| \geq \aleph_1$.

\begin{lemma}\label{Lem:M2}
A C*-algebra generated by two self-adjoint unitaries $v, w$ with $vw = -wv$ is
always isomorphic to $M_2(\bbC)$. 
\end{lemma}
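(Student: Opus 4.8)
The plan is to prove Lemma~\ref{Lem:M2} by writing down an explicit $*$-isomorphism onto $M_2(\bbC)$ via a universal-property argument. Let $v,w$ be self-adjoint unitaries with $vw=-wv$. In $M_2(\bbC)$ consider the two Pauli-type matrices
\[
V=\left(\begin{matrix} 0 & 1 \\ 1 & 0 \end{matrix}\right), \qquad
W=\left(\begin{matrix} 1 & 0 \\ 0 & -1 \end{matrix}\right).
\]
These are self-adjoint unitaries satisfying $VW=-WV$, and a direct check shows they generate all of $M_2(\bbC)$ (for instance $VW$ together with $V,W$ and the identity spans the four matrix units). First I would verify these linear-algebra facts, which are routine.

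Next I would argue that the assignment $v\mapsto V$, $w\mapsto W$ extends to a surjective $*$-homomorphism $\pi\colon C^*(v,w)\to M_2(\bbC)$. The cleanest route is to show that the pair $(v,w)$ satisfies the \emph{universal} relations of the algebra presented by two self-adjoint unitary generators with anticommutation; since $(V,W)$ is another such pair, the universal property yields $*$-homomorphisms in both directions, hence $C^*(v,w)$ is a quotient of this universal C*-algebra. It therefore suffices to identify the universal C*-algebra on these relations, or more directly to bound its dimension. I would compute that the relations force every word in $v,w$ to reduce (up to sign) to one of $1,v,w,vw$: using $v^2=w^2=1$ and $wv=-vw$ one rewrites any product into the normal form $\pm v^a w^b$ with $a,b\in\{0,1\}$. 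Hence $C^*(v,w)$ is linearly spanned by $\{1,v,w,vw\}$ and has dimension at most $4$.

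The main obstacle is ruling out degeneration, i.e.\ showing the algebra is not of dimension strictly less than $4$ (it could a priori collapse if $v,w,vw,1$ were linearly dependent). This is exactly where the explicit representation $\pi$ earns its keep: since $V,W,VW,I$ are linearly independent in $M_2(\bbC)$ and $\pi$ sends $1,v,w,vw$ to them, the four elements $1,v,w,vw$ must themselves be linearly independent in $C^*(v,w)$. Combined with the spanning statement from the previous paragraph, this forces $\dim C^*(v,w)=4$ and shows $\pi$ is injective. A surjective $*$-homomorphism between C*-algebras of the same finite dimension is an isomorphism, so $C^*(v,w)\cong M_2(\bbC)$.

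I would then note for completeness that surjectivity of $\pi$ follows from the generation fact for $(V,W)$ established at the start, so no separate argument is needed there. The only genuinely substantive input is the linear independence of $1,v,w,vw$, and by routing it through the concrete matrices $V,W$ rather than through abstract state or trace constructions, the proof stays elementary and self-contained. In summary, the strategy is: exhibit $(V,W)$ in $M_2(\bbC)$, reduce words to the four-element spanning set via the relations, and use $\pi$ to transport the linear independence of the matrix images back to the abstract algebra, concluding that $\pi$ is a dimension-preserving surjection and hence an isomorphism.
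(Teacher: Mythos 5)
Your proof stands or falls with the existence of the $*$-homomorphism $\pi\colon C^*(v,w)\to M_2(\bbC)$ sending $v\mapsto V$, $w\mapsto W$, and the justification you give for it is circular. The universal property of the universal C*-algebra $U$ on your relations yields morphisms \emph{out of} $U$ --- namely surjections $U\to C^*(v,w)$ and $U\to M_2(\bbC)$ --- but not a morphism out of the concrete algebra $C^*(v,w)$. For $\pi$ to be well-defined you need the kernel of $U\to C^*(v,w)$ to be contained in the kernel of $U\to M_2(\bbC)$, i.e.\ that the concrete pair $(v,w)$ satisfies no relations beyond the universal ones; but that is exactly the nondegeneracy (linear independence of $1,v,w,vw$) which you then propose to deduce \emph{from} $\pi$. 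That such an ``extension'' can genuinely fail is shown by the commuting analogue: for two commuting self-adjoint unitaries the universal algebra is $\bbC^4$, yet for the degenerate pair $w=v$ there is no $*$-homomorphism $C^*(v,w)\to\bbC^4$ sending the generators to the universal ones, since it would have to send $v$ to two distinct elements. So the step ``the assignment extends'' is a genuine gap, not a routine verification.

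The repair is short and stays within your framework: run your word-reduction argument inside $U$ itself, giving $\dim U\le 4$; your surjection $U\to M_2(\bbC)$ forces $\dim U=4$, hence $U\cong M_2(\bbC)$; since $M_2(\bbC)$ is simple and $C^*(v,w)\neq 0$ (it contains the unit $v^2=1$), the surjection $U\to C^*(v,w)$ has trivial kernel and is an isomorphism. Alternatively, prove the independence of $1,v,w,vw$ directly in $C^*(v,w)$: if $\alpha+\beta v+\gamma w+\delta vw=0$, averaging this relation with its conjugates $a\mapsto vav$ and $a\mapsto waw$ yields $\alpha+\beta v=0$ and $\alpha+\gamma w=0$; if $\beta\neq 0$ then $v$ would be a scalar and hence central, contradicting $vw=-wv$ together with $vw\neq 0$ (a product of unitaries), so $\beta=\alpha=0$, and then $\gamma=\delta=0$ follow. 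Either fix takes a few lines, after which your dimension-preserving surjection argument goes through. For comparison, the paper takes a slicker route past this whole issue: after the same spanning observation it merely notes that the algebra is noncommutative (again because $vw=-wv$ with $vw\neq 0$) and invokes the fact that $M_2(\bbC)$ is the unique noncommutative C*-algebra of dimension at most $4$, thereby avoiding both the universal algebra and the independence question; the explicit matrices appear there only to record a concrete isomorphism, not as a logical ingredient.
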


\begin{proof}
A C*-algebra $A$ 
generated by two self-adjoint unitaries $v, w$ with $vw = -wv$ 
is spanned (as a vector space) by 4 elements $\{1,v,w,vw\}$, 
and it is noncommutative. 
Hence it is isomorphic to $M_2(\bbC)$ 
which is the unique noncommutative C*-algebra with dimension $\leq 4$. 
A concrete isomorphism from $A$ to $M_2(\bbC)$ is
given by sending $v$ and $w$ to the unitaries
\[
\left(\begin{matrix} 1 & 0 \\ 0 & -1 \end{matrix}\right)
\qquad \text{and} \qquad
\left(\begin{matrix} 0 & 1 \\ 1 & 0 \end{matrix}\right)
\]
in $M_2(\bbC)$.
\end{proof}

Let us take a set $X$.
For each $x \in X$,
let $A_x$ be a C*-algebra
generated by two self-adjoint unitaries $v_x, w_x$
with $v_x w_x = - w_x v_x$.
By Lemma~\ref{Lem:M2},
$A_x$ is isomorphic to $M_2(\bbC)$.
We define a UHF algebra $A_X$
by $A_X := \bigotimes_{x \in X} A_x \cong \bigotimes_X M_2(\bbC)$.
We define an automorphism $\alpha$ on $A_X$
by $\alpha := \bigotimes_{x \in X}\Ad v_x$.
Note that $\alpha^2 = \id$. 
Let $\{e_{i,j}\}_{i,j=1}^2$ be a system of matrix units of $M_2(\bbC)$, 
and define an embedding 
\[
\iota \colon 
A_X \ni a \mapsto ae_{1,1}+\alpha(a)e_{2,2} \in A_X\otimes M_2(\bbC).
\]
Let $u \in A_X\otimes M_2(\bbC)$ be a self-adjoint unitary 
defined by $u := e_{1,2} + e_{2,1}$. 
Set $B_X := C^*(\iota(A_X)\cup\{u\})$. 
We consider $A_X$ as a unital subalgebra of $B_X$ 
and omit $\iota$. 
Then we have $u a = \alpha(a) u$ for $a \in A_X$ 
and $B_X=\{au+a' : a,a'\in A_X\}$.

\begin{remark}\label{rem:crosspro}
The C*-algebra $B_X$ is nothing but 
the crossed product $A_X\rtimes_{\alpha} (\bbZ/2\bbZ)$.
\end{remark}

For $Y\subseteq X$,
we denote by $A_Y$ the subalgebra
$\bigotimes_{x \in Y} A_x \subseteq A_X \subseteq B_X$,
and define $B_Y := C^*(A_Y \cup \{u\}) \subseteq B_X$.
It is easy to see that $A_Y \subseteq A_X$
is globally invariant under $\alpha$,
and hence $B_Y=\{au+a' : a,a'\in A_Y\}$.

\begin{lemma} \lbl{Lem:A'}
If $Y$ is infinite
then $Z_{B_X}(A_Y)= A_{X\setminus Y}$.
\end{lemma}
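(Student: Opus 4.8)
The plan is to prove the two inclusions separately. The inclusion $A_{X\setminus Y}\subseteq Z_{B_X}(A_Y)$ is immediate: the tensor factors $\{A_x\}_{x\in X}$ mutually commute, so every element of $A_{X\setminus Y}\subseteq A_X\subseteq B_X$ commutes with $A_Y$. The work is in the reverse inclusion. First I would exploit the crossed product ($\bbZ/2\bbZ$-grading) structure recorded above: every $b\in B_X$ is of the form $b=au+a'$ with $a,a'\in A_X$, and this decomposition is unique. Uniqueness comes from the conditional expectation $E=\frac12(\id+\hat\alpha)\colon B_X\to A_X$, where $\hat\alpha$ is the dual automorphism fixing $A_X$ and sending $u\mapsto -u$; this $\hat\alpha$ exists by the universal property, since $-u$ is again a self-adjoint unitary with $(-u)a=\alpha(a)(-u)$. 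Then for $b\in Z_{B_X}(A_Y)$ and $c\in A_Y$, expanding $bc-cb=0$ via $uc=\alpha(c)u$ gives $bc-cb=(a\alpha(c)-ca)u+(a'c-ca')$; applying $E$ and $\id-E$ separately (and right-multiplying by the unitary $u$) yields the two coefficient equations $a\alpha(c)=ca$ and $a'c=ca'$ for all $c\in A_Y$.

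Next I would dispose of $a'$. The relation $a'c=ca'$ says exactly $a'\in Z_{A_X}(A_Y)$. Since $A_Y=\bigotimes_{x\in Y}A_x$ is UHF and hence LM, and $A_X=A_Y\otimes A_{X\setminus Y}$, Lemma~\ref{Lem:A'=B} gives $Z_{A_X}(A_Y)=A_{X\setminus Y}$, so $a'\in A_{X\setminus Y}$, which is the desired form.

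The crux is to show $a=0$. Here I would feed the elements $c=w_x$ for $x\in Y$ into $a\alpha(c)=ca$. Since $\alpha\rs_{A_x}=\Ad v_x$ sends $w_x\mapsto v_xw_xv_x=-w_x$, this reads $-aw_x=w_xa$, i.e.\ $\Ad w_x(a)=w_xaw_x^*=-a$. Let $E_x\colon A_X\to A_{X\setminus\{x\}}$ be the slice conditional expectation given by the normalized trace $\tau_x$ on the factor $A_x$; writing $A_X=A_x\otimes A_{X\setminus\{x\}}$ and $E_x=\tau_x\otimes\id$, the $\Ad w_x$-invariance of $\tau_x$ gives $E_x\circ\Ad w_x=E_x$, whence $E_x(a)=E_x(\Ad w_x(a))=-E_x(a)$ and therefore $E_x(a)=0$ for every $x\in Y$. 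Finally I would use that the subalgebras $\bigotimes_{x\in G}A_x$ over finite $G\subseteq X$ have dense union in $A_X$: given $\e>0$ choose such a $G$ and $a_G\in\bigotimes_{x\in G}A_x$ with $\|a-a_G\|<\e$, then pick $x\in Y\setminus G$ (possible precisely because $Y$ is infinite). As $x\notin G$ we have $E_x(a_G)=a_G$, so $\|a_G\|=\|E_x(a_G-a)\|\le\|a_G-a\|<\e$ and hence $\|a\|<2\e$; letting $\e\to 0$ gives $a=0$ and thus $b=a'\in A_{X\setminus Y}$.

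I expect the main obstacle to be this last step: identifying the correct conditional expectation $E_x$ and verifying that anticommutation with $w_x$ forces $E_x(a)=0$, together with the approximation argument in which the infinitude of $Y$ enters essentially (it is exactly what lets one choose a factor $x\in Y$ outside any prescribed finite support). The coefficient-extraction and the treatment of $a'$ are comparatively routine, relying respectively on the dual-action expectation and on the already-established Lemma~\ref{Lem:A'=B}.
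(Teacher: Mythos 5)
Your proposal is correct and takes essentially the same route as the paper's proof: decompose $b=au+a'$ with $a,a'\in A_X$, use uniqueness of this decomposition to extract the coefficient relations, kill $a$ by anticommutation with $w_x$ for $x\in Y$ chosen outside a finite approximate support of $a$ (exactly where the infinitude of $Y$ enters), and place $a'\in Z_{A_X}(A_Y)=A_{X\setminus Y}$ via Lemma~\ref{Lem:A'=B}. The only cosmetic differences are in bookkeeping: the paper obtains uniqueness of the decomposition directly from the concrete matrix picture and concludes $a=0$ from the elementary norm identity $\|a\|=\|w_ya-aw_y\|/2<\e$, whereas you route these through the dual-action expectation $E=\tfrac12(\id+\hat\alpha)$ and the slice expectation $E_x=\tau_x\otimes\id$, both of which are available (and indeed implicitly used elsewhere in the paper, e.g.\ in the proof of Lemma~\ref{Lem:A'=B}).
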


\begin{proof}
Since $Z_{A_X}(A_Y) = A_{X\setminus Y}$ by Lemma~\ref{Lem:A'=B},
it suffices to show that $Z_{B_X}(A_Y) \subseteq A_X$.
Take $au+a'\in Z_{B_X}(A_Y)$ with $a,a' \in A_X$,
and we will show $a=0$.

For any $\e>0$
there is a finite $F\subseteq X$
such that $a \in_\e A_F$.
Since $Y$ is infinite,
pick $y\in Y\setminus F$.
The unitary $w_y \in A_Y$ satisfies $uw_y=-w_yu$.
Hence $w_y(au+a')=(au+a')w_y$ yields
$(w_ya+aw_y)u+(w_ya'-a'w_y)=0$. 
Since $b u + b' = 0$ for $b,b'$ in $A_X$ implies $b=b'=0$,
we have $w_ya = - aw_y$. 
Thus we get
\[
\|a\| = \|w_ya\| = \| w_y a + w_y a \| /2
= \| w_y a - a w_y \| /2.
\]
Since $a \in_\e A_F$ and $w_y$ commutes with $A_F$,
we have $\|a\|= \| w_y a - a w_y \| /2 < \e$.
Since $\e$ was arbitrary, $a=0$.
We are done.
\end{proof}

\begin{lemma} \lbl{L.2}
If $Y\subsetneq X$ and $Y$ is infinite, then $B_Y$ is not
complemented in~$B_X$.
\end{lemma}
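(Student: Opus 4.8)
I need to show that $B_Y$ is not complemented in $B_X$ when $Y \subsetneq X$ is infinite; that is, I must show $C^*(B_Y \cup Z_{B_X}(B_Y)) \neq B_X$. The natural strategy is to identify $Z_{B_X}(B_Y)$ explicitly and then check that together with $B_Y$ it fails to generate all of $B_X$. The key point is that the unitary $u$ implements the automorphism $\alpha$, and the generators $v_x$ that define $\alpha$ do \emph{not} commute with $u$ whenever $x$ is ``active'' for $\alpha$.

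\medbreak\noindent\textbf{Computing the relative commutant of $B_Y$.} First I would compute $Z_{B_X}(B_Y)$. Since $B_Y = C^*(A_Y \cup \{u\})$, we have $Z_{B_X}(B_Y) = Z_{B_X}(A_Y) \cap Z_{B_X}(u)$. Lemma~\ref{Lem:A'} already gives $Z_{B_X}(A_Y) = A_{X\setminus Y}$, so the task reduces to intersecting with the commutant of $u$ inside $A_{X\setminus Y}$. An element $a \in A_{X\setminus Y}$ commutes with $u$ precisely when $ua = au$; but $ua = \alpha(a)u$, so this holds if and only if $\alpha(a) = a$, i.e.\ $a$ lies in the fixed-point algebra of $\alpha\rs_{A_{X\setminus Y}}$. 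Writing $\alpha\rs_{A_{X\setminus Y}} = \bigotimes_{x \in X\setminus Y}\Ad v_x$, I expect the fixed-point algebra to be a proper subalgebra of $A_{X\setminus Y}$ (for instance $w_x$ is not fixed, since $\Ad v_x(w_x) = v_x w_x v_x = -w_x$). Thus $Z_{B_X}(B_Y)$ is the $\alpha$-fixed subalgebra of $A_{X\setminus Y}$, which I will call $A_{X\setminus Y}^\alpha$.

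\medbreak\noindent\textbf{Showing the generated algebra is proper.} Now I would examine $C^*(B_Y \cup A_{X\setminus Y}^\alpha)$. Both $A_Y$ and $A_{X\setminus Y}^\alpha$ are $\alpha$-invariant (the former globally, the latter pointwise), so the subalgebra $C^*(A_Y \cup A_{X\setminus Y}^\alpha)$ is an $\alpha$-invariant subalgebra of $A_X$, and adjoining $u$ keeps us inside the crossed-product structure: every element has the form $au + a'$ with $a, a' \in C^*(A_Y \cup A_{X\setminus Y}^\alpha)$. The crucial observation is that $C^*(A_Y \cup A_{X\setminus Y}^\alpha) \neq A_X$, because the generator $w_x$ for any fixed $x \in X\setminus Y$ is not in this algebra: it is not in $A_Y$, and it is not $\alpha$-fixed. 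To make this rigorous I would use a conditional expectation or a slice-type argument: pick $x_0 \in X \setminus Y$, and use the state/expectation that reads off the $w_{x_0}$-coefficient to separate $w_{x_0}$ from $C^*(A_Y \cup A_{X\setminus Y}^\alpha)$, and hence $w_{x_0} \notin C^*(B_Y \cup Z_{B_X}(B_Y))$.

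\medbreak\noindent\textbf{Main obstacle.} The main technical obstacle is the last separation step: rigorously verifying that $w_{x_0}$ escapes $C^*(A_Y \cup A_{X\setminus Y}^\alpha)$, since this algebra is generated by an infinite tensor mess of $\alpha$-fixed elements together with $A_Y$. The cleanest route is to build an explicit $*$-homomorphism or conditional expectation of $B_X$ that annihilates the troublesome part: for example, use the canonical conditional expectation $B_X \to A_X$ killing the $u$-component, followed by a slice map on the $x_0$ tensor factor that projects $A_{x_0} \cong M_2(\bbC)$ onto the $\alpha$-fixed diagonal (the trace on the off-diagonal), and check that this map fixes all of $C^*(A_Y \cup A_{X\setminus Y}^\alpha)$ while sending $w_{x_0} \mapsto 0$. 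Producing this expectation and confirming it is well-defined and multiplicative on the relevant subalgebra is where the real work lies; everything else is bookkeeping with the relations $ua = \alpha(a)u$ and $\alpha^2 = \id$.
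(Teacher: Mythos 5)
Your setup is exactly the paper's: you compute $Z_{B_X}(B_Y)=Z_{B_X}(A_Y)\cap Z_{B_X}(\{u\})=A_{X\setminus Y}\cap Z_{B_X}(\{u\})$ via Lemma~\ref{Lem:A'}, identify this with the fixed-point algebra $A^\alpha_{X\setminus Y}$ of $\alpha\rs_{A_{X\setminus Y}}$ using $ua=\alpha(a)u$, note that $C^*(B_Y\cup Z_{B_X}(B_Y))=\{au+a' : a,a'\in C^*(A_Y\cup A^\alpha_{X\setminus Y})\}$, and reduce everything to showing $w_{x_0}\notin C^*(A_Y\cup A^\alpha_{X\setminus Y})$ for some $x_0\in X\setminus Y$. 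All of that is correct. The gap is in the separation map you propose at the end. The fixed-point algebra $A^\alpha_{X\setminus Y}$ is \emph{not} built sitewise: an element can be globally $\alpha$-fixed without being fixed at each tensor factor. Concretely, pick distinct $x_0,x_1\in X\setminus Y$ (possible whenever $|X\setminus Y|\geq 2$). Then $w_{x_0}w_{x_1}$ satisfies $\alpha(w_{x_0}w_{x_1})=(-w_{x_0})(-w_{x_1})=w_{x_0}w_{x_1}$, commutes with $A_Y$ and with $u$, so it lies in $Z_{B_X}(B_Y)$; yet your slice at the $x_0$ factor, $\tfrac12(\id+\Ad v_{x_0})$, sends it to $\tfrac12(w_{x_0}w_{x_1}-w_{x_0}w_{x_1})=0$. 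So your composite expectation annihilates, rather than fixes, part of the very subalgebra you need it to fix, and the claimed verification ("fixes all of $C^*(A_Y\cup A^\alpha_{X\setminus Y})$ while killing $w_{x_0}$") is false except in the degenerate case $|X\setminus Y|=1$.

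The repair is to slice away the $A_Y$ leg with a state instead of averaging at the site $x_0$, which is what the paper does implicitly. Since $A_Y$ is LM, Lemma~\ref{Lem:LM times B} identifies $C^*(A_Y\cup A^\alpha_{X\setminus Y})$ with $A_Y\otimes A^\alpha_{X\setminus Y}$ inside $A_Y\otimes A_{X\setminus Y}$; the unlabeled lemma in \S\ref{Sec:Pre} (proved by the slice map $E(ab)=\varphi(a)b$ for a state $\varphi$ on $A_Y$) then gives $(A_Y\otimes A^\alpha_{X\setminus Y})\cap A_{X\setminus Y}=A^\alpha_{X\setminus Y}$, and since $\alpha(w_{x_0})=-w_{x_0}$ we conclude $w_{x_0}\notin A_Y\otimes A^\alpha_{X\setminus Y}$. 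Combining this with the uniqueness of the decomposition $au+a'$ (the fact, established in the proof of Lemma~\ref{Lem:A'}, that $bu+b'=0$ with $b,b'\in A_X$ forces $b=b'=0$) yields $w_{x_0}\notin C^*(B_Y\cup Z_{B_X}(B_Y))$, completing the proof. Note also that the paper never needs to describe $A^\alpha_{X\setminus Y}$ explicitly; your identification of the commutant with the fixed-point algebra is correct but optional, and trying to exploit its (nonexistent) sitewise structure is precisely what led the proposed expectation astray.
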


\begin{proof}
Since $B_Y=C^*(A_Y\cup\{u\})$,
we have
\[
Z_{B_X}(B_Y) = Z_{B_X}(A_Y) \cap Z_{B_X}(\{u\})
= A_{X\setminus Y} \cap Z_{B_X}(\{u\})
\]
by Lemma~\ref{Lem:A'}.
Hence
\[
C^*(B_Y \cup Z_{B_X}(B_Y))
= \big\{au+a' :
a,a'\in A_Y \otimes (A_{X\setminus Y} \cap Z_{B_X}(\{u\}))\big\}
\]
which does not contain $w_y \in A_{X\setminus Y}$
for $y\in X\setminus Y$.
\end{proof}

\begin{prop}\label{P.B-X}
\begin{enumerate}
\item \lbl{T.1.1} If $X$ is infinite,
then $B_X$ is a unital AM algebra with $\chi(B_X)=|X|$.
\item \lbl{T.1.2} If $X$ is uncountable,
then $B_X$ is not UHF.
\item \lbl{T.1.3} If $X$ is uncountable,
then $B_X\otimes D$ 
is not UHF for any unital C*-algebra $D$.
\end{enumerate}
\end{prop}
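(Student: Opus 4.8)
The plan is to handle (1) by an explicit Jordan--Wigner construction of \emph{full} matrix subalgebras, and then to obtain (2) and (3) uniformly from Proposition~\ref{P.UHF-criterion} and Lemma~\ref{L.2}. For (1) the density claim is immediate: since $A_X\subseteq B_X\subseteq A_X\otimes M_2(\bbC)$ and $\chi(A_X)=\chi(A_X\otimes M_2(\bbC))=|X|$ by Lemma~\ref{L.tensor.density}, we get $\chi(B_X)=|X|$. The real content is to produce a directed family of full matrix subalgebras with dense union. First I would record the commutation data: $\alpha(v_x)=v_x$ and $\alpha(w_x)=-w_x$, so $u$ commutes with every $v_x$ and anticommutes with every $w_x$; moreover $\tilde w_x:=iv_xw_x$ is again a self-adjoint unitary, anticommuting with $w_x$ and with $u$. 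Thus at each site the pair $w_x,\tilde w_x$ are two anticommuting self-adjoint unitaries (``Majoranas''), commuting with those at other sites, and all of them anticommute with $u$.

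Next I would fix a linear order $\prec$ on $X$ with no greatest element (e.g.\ a well-order of order type $|X|$). For a finite $S$ with $\prec$-greatest element $s^*$, consider the self-adjoint unitaries
\[
u,\qquad \Big(\prod_{z\prec x,\,z\in S} v_z\Big)w_x,\quad \Big(\prod_{z\prec x,\,z\in S} v_z\Big)\tilde w_x\ \ (x\in S\setminus\{s^*\}),\qquad \Big(\prod_{z\prec s^*,\,z\in S} v_z\Big)w_{s^*}.
\]
A direct check (the strings $\prod v_z$ supply the sign flips, and $u$ anticommutes with every $w$ and $\tilde w$) shows these $2|S|$ elements pairwise anticommute; since an \emph{even} number of pairwise anticommuting self-adjoint unitaries generates a copy of $M_{2^{|S|}}(\bbC)$ (the even complex Clifford algebra, which is simple --- the strengthening of Lemma~\ref{Lem:M2}), the generated algebra $N_S$ is a full matrix algebra, and by construction $N_S\supseteq C^*(A_{S\setminus\{s^*\}}\cup\{u\})=B_{S\setminus\{s^*\}}\supseteq A_{S\setminus\{s^*\}}$. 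For dense union, given finite $F$ I would take $p\succ\max_\prec F$ and $S=F\cup\{p\}$, so $s^*=p$ and $A_F\subseteq N_S$; hence $\bigcup_S N_S\supseteq A_X$ and contains $u$, so it is dense in $B_X=\{au+a':a,a'\in A_X\}$. For directedness I would show $N_S\subseteq N_{S''}$ whenever $S\subseteq S''$ and $\max_\prec S\prec\max_\prec S''$ (then $s^*\in S''\setminus\{\max S''\}$, so both $B_{S\setminus\{s^*\}}$ and the top generator lie in $C^*(A_{S''\setminus\{\max S''\}}\cup\{u\})\subseteq N_{S''}$); given $S,S'$, choosing $S''=S\cup S'\cup\{p\}$ with $p\succ\max_\prec(S\cup S')$ yields $N_S,N_{S'}\subseteq N_{S''}$. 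This makes $B_X$ AM.

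For (2) and (3) I would note that (2) is the special case $D=\bbC$ of (3), so it suffices to argue (3). The family $\{B_Y\}_{Y\in[X]^{\aleph_0}}$ is a $\sigma$-complete directed family of separable subalgebras of $B_X$ with dense union (directedness from $B_Y=\{au+a':a,a'\in A_Y\}$, $\sigma$-completeness because a countable union of countable sets is countable). If $B_X\otimes D$ were UHF it would be a tensor product of full matrix algebras, which are separable and nuclear, so the hypothesis of Proposition~\ref{P.UHF-criterion} holds with this $A=B_X$ and $D$; that proposition then yields a club $C\subseteq[X]^{\aleph_0}$ with $B_Y$ complemented in $B_X$ for every $Y\in C$. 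But $C$ is nonempty and, as $X$ is uncountable, every $Y\in C$ is a proper infinite subset of $X$, so Lemma~\ref{L.2} says $B_Y$ is \emph{not} complemented --- a contradiction. Hence $B_X\otimes D$ is not UHF, and in particular (taking $D=\bbC$) $B_X$ is not UHF.

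\textbf{Main obstacle.} The genuinely delicate step is (1). The naive finite subalgebras $B_F=C^*(A_F\cup\{u\})$ are \emph{not} full matrix algebras but blocks of the form $M_{2^{|F|}}(\bbC)\oplus M_{2^{|F|}}(\bbC)$, precisely because $\{u\}$ together with the site Majoranas of $F$ is an \emph{odd} anticommuting family (whose Clifford algebra has a nontrivial center). The trick is to restore an even count by using only the single $w$-Majorana at the top site $s^*$, which simultaneously absorbs $u$ and keeps the family nested as $S$ grows; making the parity bookkeeping and the choice of order (no greatest element) cooperate with both dense union and directedness is the crux. Once that is in place, (2) and (3) are routine consequences of Proposition~\ref{P.UHF-criterion} and Lemma~\ref{L.2}.
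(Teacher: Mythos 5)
Your proof is correct, and for part (3) it is the paper's argument verbatim: apply Proposition~\ref{P.UHF-criterion} to the $\sigma$-complete directed family $\{B_Y\}_{Y\in[X]^{\aleph_0}}$ and contradict Lemma~\ref{L.2} on the resulting club. Two divergences are worth recording. For (1), your algebras are in fact the paper's: $N_S=C^*\big(B_{S\setminus\{s^*\}}\cup\{w_{s^*}\}\big)$ is exactly $D_{(F,y)}=C^*(B_F\cup\{w_y\})$ with $F=S\setminus\{s^*\}$, $y=s^*$, so you have rediscovered the same cofinal family; but where you dress each site with a Jordan--Wigner string and invoke simplicity of the even complex Clifford algebra on $2|S|$ generators, the paper uses a single global string: with $v=\prod_{x\in F}v_x$ the unitary $uv$ commutes with $A_F$ and anticommutes with $w_y$, so $C^*(uv,w_y)\cong M_2(\bbC)$ by Lemma~\ref{Lem:M2} and $D_{(F,y)}\cong A_F\otimes M_2(\bbC)\cong M_{2^{|F|+1}}(\bbC)$. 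This is the same parity trick --- your ``restore an even count with one extra Majorana'' is the paper's extra $M_2$ tensor factor --- but packaged so that directedness is immediate over all pairs $(F,y)$ with $y\notin F$, with no auxiliary linear order needed. For (2), the paper deliberately does \emph{not} route through Proposition~\ref{P.UHF-criterion}: it compares $\{B_Y\}$ with the $\sigma$-complete family of complemented separable subalgebras that a UHF decomposition supplies (Lemma~\ref{Lem:tensor}) directly via the club-comparison machinery of Proposition~\ref{Prop:find_club}, precisely so that part (2) stays free of nuclearity --- the paper flags that nuclearity and the slice-map property enter only through Proposition~\ref{P.UHF-criterion}, which is reserved for (3). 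Your reduction of (2) to the $D=\bbC$ case of (3) is logically valid but imports that machinery where an elementary comparison suffices; each approach buys something (yours: one argument instead of two; the paper's: a nuclearity-free proof of (2)). One cosmetic slip: $\bigcup_S N_S$ contains the dense local subalgebra $\bigcup_F A_F$ together with $u$, not $A_X$ itself; since a directed union of subalgebras is a $*$-algebra and $B_X=\{au+a':a,a'\in A_X\}$, density follows as you intend.
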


\begin{proof}
\eqref{T.1.1} 
Suppose $X$ is infinite.

Let us set
\[
\Lambda=\{(F,y) : \text{$F \subseteq X$ finite,
and $y\in X \setminus F$}\}
\]
and define
\[
D_{(F,y)} = C^*\big(B_F \cup \{w_y\}\big) \subseteq B_X.
\]
for $(F,y) \in \Lambda$.
Then it is clear that
$\{D_{(F,y)}\}_{(F,y) \in \Lambda}$
is a directed family with dense union.
It remains to show that $D_{(F,y)}$ is a full matrix algebra
for $(F,y) \in \Lambda$.
Take $(F,y) \in \Lambda$ with $|F| = n \in \bbN$.
Then we have $A_F \cong M_{2^n}(\bbC)$,
and the restriction of $\alpha$ to $A_F$ coincides
with $\Ad v$ where
\[
v=\prod_{x \in F}v_x\in A_F.
\]
Then the two self-adjoint unitaries $uv$ and $w_y$ in $D_{(F,y)}$
satisfy $w_y(uv) = -(uv)w_y$ and commute with $A_F$.
By Lemma~\ref{Lem:M2},
the subalgebra of $D_{(F,y)}$ generated by $uv$ and $w_y$
is isomorphic to $M_2(\bbC)$, and commute with $A_F$.
Since $D_{(F,y)}$ is generated by $A_F$ and this subalgebra,
$D_{(F,y)}$ is isomorphic to $M_{2^{n+1}}(\bbC)$.
We are done.

\eqref{T.1.2} 
Suppose $X$ is uncountable. 
Then $\{B_Y\}_{Y \in [X]^{\aleph_0}}$ 
is a $\sigma$-complete directed family of separable
subalgebras of $B_X$ with dense union. 
By Lemma~\ref{L.2}, neither one of these subalgebras is complemented. 
By Lemma~\ref{Lem:tensor},
a UHF algebra has a $\sigma$-complete directed system 
of separable complemented subalgebras with dense union. 
Hence $B_X$ cannot be UHF by Lemma~\ref{Lem:project clubs}. 

\eqref{T.1.3} 
As in (2), 
$B_X$ has a $\sigma$-complete directed system 
of separable subalgebras with dense union 
neither one of which is complemented. 
By Proposition~\ref{P.UHF-criterion}, 
$B_X\otimes D$ cannot be UHF for any unital C*-algebra $D$
because every UHF algebra is a tensor product 
of separable nuclear C*-algebras.
\end{proof}

Note that an example of a unital LM algebra $A$
that is not UHF given in Proposition~\ref{P.example.Z}
has the property that $A \otimes D$ is UHF
for some UHF algebra $D$,
but the one given in Proposition~\ref{P.B-X}
does not have this property.

The following answers \cite[Problem~8.3]{Dix:Some} negatively 
although it was certainly known.

\begin{corollary} \label{Cor:DixProb8.3}
There is a proper subalgebra $A$ of the CAR algebra $B$ such that $A$ is also
CAR algebra and $Z_{B}(A)=\bbC 1$. 
In particular, $B\neq A\otimes Z_{B}(A)$.
\end{corollary}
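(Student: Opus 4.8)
The plan is to specialize the entire construction of this section to a countably infinite index set $X$ and then to take $B := B_X$ and $A := A_X$; all three required properties will follow from results already established, so the corollary is essentially a reading-off of the countable case.

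First I would identify the two algebras. Since $X$ is countably infinite, $A_X = \bigotimes_{x \in X} A_x \cong \bigotimes_{\aleph_0} M_2(\bbC)$ is by definition the CAR algebra, giving the required $A$. For $B_X$, Proposition~\ref{P.B-X}~\eqref{T.1.1} shows that $B_X$ is a unital separable AM algebra; since AM implies LM and Glimm's theorem (quoted in the introduction) says that a unital separable LM algebra is UHF, $B_X$ is UHF. To see that it is again the CAR algebra, I would extract from the directed family $\{D_{(F,y)}\}$ in the proof of Proposition~\ref{P.B-X}~\eqref{T.1.1} an increasing chain: choosing finite sets $F_1\subseteq F_2\subseteq\cdots$ with $\bigcup_n F_n = X$ and points $y_n \in F_{n+1}\setminus F_n$, one gets $D_{(F_n,y_n)}\subseteq D_{(F_{n+1},y_{n+1})}$ with each $D_{(F_n,y_n)}\cong M_{2^{|F_n|+1}}(\bbC)$ and dense union. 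Thus $B_X \cong \bigotimes_{\aleph_0} M_2(\bbC)$ is the CAR algebra, so $A$ and $B$ are both CAR algebras as required.

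Next I would check that $A_X$ is a \emph{proper} subalgebra of $B_X$. Recall that $B_X = \{au + a' : a,a' \in A_X\}$ and that $au + a' = 0$ forces $a = a' = 0$. Writing $u = 1\cdot u + 0$, the nonzero ``$a$-part'' shows $u \notin A_X$, whence $A_X \subsetneq B_X$. For the relative commutant, I would apply Lemma~\ref{Lem:A'} with $Y = X$ (which is infinite), obtaining $Z_{B_X}(A_X) = A_{X\setminus X} = A_\emptyset = \bbC 1$ by the convention $\bigotimes_{x \in \emptyset} A_x = \bbC$. Finally the ``in particular'' clause is immediate: as $Z_B(A) = \bbC 1$ we have $A \otimes Z_B(A) = A \otimes \bbC \cong A \subsetneq B$, so $B \neq A \otimes Z_B(A)$.

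I do not expect any genuine obstacle here, since every ingredient is already in place. The only point needing a little care is confirming that $B_X$ is the CAR algebra rather than some other UHF algebra; this is handled by reading off the full matrix subalgebras $M_{2^{|F|+1}}(\bbC)$ from the proof of Proposition~\ref{P.B-X}~\eqref{T.1.1} and noting that they assemble into an increasing sequence of $2$-power matrix algebras with dense union.
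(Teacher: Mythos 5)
Your proposal is correct and follows essentially the same route as the paper: take $X=\bbN$, so $A_X$ is the CAR algebra, identify $B_X$ as the CAR algebra via the full matrix subalgebras $D_{(F,y)}\cong M_{2^{|F|+1}}(\bbC)$ from the proof of Proposition~\ref{P.B-X}~(1), and obtain $Z_{B_X}(A_X)=\bbC 1$ from Lemma~\ref{Lem:A'} with $Y=X$. Your additional explicit verifications (properness of $A_X\subseteq B_X$ via the uniqueness of the decomposition $au+a'$, and the nesting of the chain $D_{(F_n,y_n)}$) are correct details that the paper leaves implicit.
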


\begin{proof} 
Use Proposition~\ref{P.B-X} with $X=\bbN$. 
Then $A_X$ is the CAR algebra. 
The C*-algebra $B_X$ is also the CAR algebra 
because it is a separable unital LM algebra 
obtained as a direct limit of algebras of the form $M_{2^n}(\bbC)$ 
by the proof of Proposition~\ref{P.B-X}.
By Lemma~\ref{Lem:A'}, we have $Z_{B_X}(A_X)=\bbC 1$.
\end{proof}

\section{AM = LM and AF = LF for character density $\leq \aleph_1$}\label{Sec:AM=LM}

We first show AM $=$ LM $+$ AF.
We use the following well-known result repeatedly.
Recall that a finite-dimensional C*-algebra $D$
is isomorphic to a direct sum of finitely many
full matrix algebras (e.g., \cite[Theorem~III.1.1]{Dav:C*}),
and the cardinality $|F|$ of a system $F$ of matrix units of $D$
as defined after \cite[Theorem~III.1.1]{Dav:C*}
coincides with the dimension of $D$.

\begin{lemma}[{\cite[Corollary~III.3.3]{Dav:C*}}]\label{L.Davidson}
Given $d\in\bbN$, there exists $\delta>0$
so that
if $D$ is a finite-dimensional subalgebra of a C*-algebra $A$
with a system $F$ of matrix units such that $|F| = d$
and $B$ is a subalgebra of $A$
such that $F\subseteq_\delta B$,
there exists a unitary $u$ in the unitization of $A$
satisfying $uD u^* \subseteq B$
and commuting with $D \cap B$.

Moreover, for a previously given $\e>0$ in addition to $d$,
there exists $\delta>0$ such that
one can choose $u$ as above so that $\|u-1\|<\e$. \qed
\end{lemma}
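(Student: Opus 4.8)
The plan is to treat this as the standard stability (\emph{uniqueness up to a small unitary}) theorem for finite systems of matrix units: approximate matrix units lying inside a subalgebra can be polished to an exact copy nearby, and two uniformly close systems of matrix units are conjugate by a unitary close to $1$. Throughout, $d=|F|=\dim D$ is fixed, and every estimate I produce will have the form $(\text{error})\le K(d)\,\delta$ with $K(d)$ depending only on $d$; this uniformity is exactly what lets a single $\delta=\delta(d)$ (resp.\ $\delta(d,\e)$) work, and it is why I postpone the choice of $\delta$ to the very end.

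Write $D=\bigoplus_{k} M_{n_k}(\bbC)$ with $F=\{e^{(k)}_{ij}\}$, so $\sum_k n_k^2=d$. From $F\subseteq_\delta B$ choose $b^{(k)}_{ij}\in B$ with $\|b^{(k)}_{ij}-e^{(k)}_{ij}\|<\delta$; after symmetrizing the diagonal entries these form an \emph{approximate} system of matrix units in $B$, obeying the matrix-unit relations up to $O(\delta)$. First I would polish the diagonal: each $b^{(k)}_{ii}$ is an $O(\delta)$-almost projection, so its spectral projection (functional calculus inside $B$) gives a genuine projection $O(\delta)$-close to $e^{(k)}_{ii}$, and an almost-orthogonal almost-complete family of projections is corrected to an honestly orthogonal family $\{q^{(k)}_i\}\subseteq B$ by a further $O(\delta)$ perturbation. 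Then $q^{(k)}_i b^{(k)}_{ij} q^{(k)}_j$ is an $O(\delta)$-almost partial isometry from $q^{(k)}_j$ to $q^{(k)}_i$, and its polar part (taken in the corner $q^{(k)}_iBq^{(k)}_j$) yields exact matrix units $f^{(k)}_{ij}\in B$ with $\|f^{(k)}_{ij}-e^{(k)}_{ij}\|\le K(d)\delta$. These generate a copy $D'\subseteq B$ of $D$ via the isomorphism $e^{(k)}_{ij}\mapsto f^{(k)}_{ij}$.

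Next I would invoke the standard sub-lemma that two uniformly close systems of matrix units are conjugate by a unitary near $1$: since $\|f^{(k)}_{ij}-e^{(k)}_{ij}\|$ is $O(\delta)$, there is a unitary $u$ in the unitization of $A$ with $u e^{(k)}_{ij} u^*=f^{(k)}_{ij}$ for all $i,j,k$ and $\|u-1\|\le K'(d)\delta$; in particular $uDu^*=D'\subseteq B$. The one genuinely delicate requirement — the step I expect to be the main obstacle — is that $u$ commute with $C:=D\cap B$. Here I would \emph{not} simply average over the unitary group of $C$: the canonical expectation onto $Z_A(C)$ recovers only $C\vee Z_D(C)=Z_D(Z(C))$, which can be a proper subalgebra of $D$ (already for $C$ the diagonal of $M_2$). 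Instead I would choose $F$ \emph{adapted} to the inclusion $C\subseteq D$ (a Bratteli-type choice, so the matrix units of $C$ are prescribed sums of the $e^{(k)}_{ij}$), keep those matrix units of $C$ unchanged — they already lie in $B$ — and run the polishing of the previous paragraph only on the complementary multiplicity data, inside the corners cut out by the projections of $C\subseteq B$. Assembling $u$ corner-by-corner relative to $C$ — using that a unitary commuting with $C$ is precisely one that is ``diagonal'' with respect to $C$'s corners, and that two close partial isometries with the same $C$-source and $C$-range projections differ by corner unitaries near $1$ — produces a $u$ that fixes $C$ pointwise, hence commutes with $D\cap B$, while still conjugating $D$ onto a copy inside $B$.

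Finally, the ``moreover'' clause is free: every bound above is $\le K(d)\delta$ or $\le K'(d)\delta$ with constants depending only on $d$, so given $\e$ it suffices to take $\delta$ below $\e/K'(d)$ (and below the thresholds that make the functional calculus and the polar decompositions legal) to force $\|u-1\|<\e$. The dependence of $\delta$ on $d$ alone — not on $D$, $A$, or $B$ — is because all the perturbation estimates are intrinsic to the matrix-unit relations and uniform over the (compact) set of possible structure constants of a $d$-dimensional system.
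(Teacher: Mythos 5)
The paper contains no proof of this lemma: it is quoted from Davidson \cite[Corollary~III.3.3]{Dav:C*} with only the citation in place of an argument, so the meaningful comparison is with the standard proof in Davidson's book --- and your proposal is essentially a reconstruction of it. The polishing step (spectral projections for the almost-projections, orthogonalization, polar decomposition in the corners) is Davidson's Lemma~III.3.2, and the conjugation of two uniformly close systems of matrix units by a unitary near $1$ is his Lemma~III.3.1 via the intertwiner $w=\sum_{k,i} f^{(k)}_{i1}e^{(k)}_{1i}$ (plus the tail term $(1-q)(1-p)$, unitarized as $u=w(w^*w)^{-1/2}$). Your treatment of the commutation clause is also the right one, and you correctly identify the trap that sinks the naive averaging argument: $C\vee Z_D(C)=Z_D(Z(C))$ can be a proper subalgebra of $D$, already for the diagonal of $M_2(\bbC)$. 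Choosing matrix units adapted to $C=D\cap B$ and polishing only the complementary multiplicity data works for exactly the reason you indicate: once the polished copy $\sigma\colon D\to B$ fixes $C$ pointwise, the intertwiner satisfies $wx=\sigma(x)w=xw$ for all $x\in C$, hence $w^*$, $w^*w$, and $u$ all commute with $C$ as well.

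Two points in your sketch deserve to be made explicit. First, the lemma's hypothesis is $\delta$-closeness to $B$ of the \emph{given} system $F$, whereas your argument re-chooses a system adapted to the inclusion $C\subseteq D$; this is harmless, but only because every element of the unit ball of $D$ lies within $d\delta$ of $B$ (its coordinates with respect to the span of $F$ have modulus at most $1$, and there are at most $d$ of them), so the adapted system is still $d\delta$-close to $B$ --- without this one-line observation your construction has no input to work with. Second, ``inside the corners cut out by the projections of $C$'' must be read as including the off-diagonal corners $g_{ss}Bg_{s's'}$, $s\neq s'$, between distinct $C$-columns: your own example shows that $C$ together with its diagonal corners of $D$ need not generate $D$, so the polished data must include partial isometries connecting distinguished columns of different $C$-blocks within each block of $D$. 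Your closing phrase about partial isometries with prescribed $C$-source and $C$-range projections indicates you intend exactly this, and with these two clarifications the outline is a correct, if compressed, version of the cited proof, including the uniform $K(d)\delta$ bounds that yield the ``moreover'' clause.
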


\begin{prop} \label{P.AM=LM+AF}
A C*-algebra is AM if and only if it is LM and AF.
\end{prop}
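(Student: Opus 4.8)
The forward implication is immediate: a full matrix algebra is finite-dimensional, so an AM algebra is automatically AF, and AM implies LM as already observed in the introduction. All the content is in the converse, that LM and AF together imply AM, and I would organize it as a separable core together with a reflection argument.

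\emph{Separable case.} Suppose first that $A$ is separable, LM and AF. By Bratteli's theorem \cite{Bratt:Inductive}, AF yields a nested sequence $D_1 \subseteq D_2 \subseteq \cdots$ of finite-dimensional subalgebras with dense union. The plan is to run an Elliott-type two-sided intertwining between this Bratteli sequence and a sequence of full matrix subalgebras $N_1, N_2, \dots$ built by hand. At each stage one has a finite system of matrix units (for the current $D_k$, or for the current $N_n$), which is a finite subset of $A$; since $A$ is LM one can find a full matrix subalgebra approximating this finite set to within any prescribed $\delta$, and then Lemma~\ref{L.Davidson}---in particular its ``moreover'' refinement, which produces the conjugating unitary within $\e$ of $1$---conjugates the given finite-dimensional algebra into that full matrix subalgebra by a unitary close to the identity. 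Choosing the tolerances to sum (say smaller than $2^{-n}$ at stage $n$) makes the accumulated unitaries converge, so that after absorbing them one obtains genuine full matrix subalgebras $N_1 \subseteq N_2 \subseteq \cdots$ of $A$ whose union is dense, since it contains every $D_k$ up to arbitrarily small error. Hence separable LM and AF algebras are AM. (When $A$ is unital one may instead quote Glimm \cite{Glimm:On}.)

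\emph{Reduction to the separable case.} For general $A$, fix by Lemma~\ref{L:s-comp family} a $\sigma$-complete directed family $\{A_\sigma\}_{\sigma\in\Sigma}$ of separable subalgebras of $A$ with dense union. As in Lemma~\ref{L.AF-reflection} and the remark following it, the set of $\sigma$ for which $A_\sigma$ is simultaneously LM and AF is a club; restricting to it via Lemma~\ref{L:restriction to club} one may assume every $A_\sigma$ is LM and AF, hence separable AM by the previous step. It then remains to manufacture from these one global directed family of full matrix subalgebras of $A$ with dense union.

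\emph{Assembly, and the main obstacle.} This gluing is the crux, and is where I expect the real work to lie. One cannot simply take the union of the full matrix subalgebras furnished by the separable pieces: a full matrix subalgebra of $A_\sigma$ need only be \emph{approximately} contained in---not contained in---a full matrix subalgebra of a larger $A_\tau$, so the resulting family is not directed. The plan is to build the full matrix subalgebras coherently by recursion along the $\sigma$-complete family. When passing from $A_\sigma$ to a larger separable piece, one uses LM to approximate the matrix units of the finitely many full matrix subalgebras already assigned to $A_\sigma$ together with those of a new finite-dimensional approximant, and then applies Lemma~\ref{L.Davidson} with tolerances small enough that the previously chosen full matrix subalgebras are conjugated \emph{into} the new one; at $\sigma$-complete limits one passes to closures of unions, using $\sigma$-completeness and the convergence of the accumulated perturbations (this is where the techniques behind Proposition~\ref{Prop:find_club} and Lemma~\ref{Lem:phi=id} enter). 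Maintaining the nesting consistently through this recursion, so that the totality of the chosen full matrix subalgebras is directed by inclusion with dense union, is the genuine difficulty; once it is in place, that directed family exhibits $A$ as AM.
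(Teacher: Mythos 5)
There is a genuine gap, and it is located exactly where you placed it: the ``assembly'' step is not a proof but a plan, and as stated it cannot be completed from the data your reduction retains. After restricting to the club of $\sigma$ with $A_\sigma$ separable, LM and AF (hence AM), the only hypothesis still in play is that $A$ has a $\sigma$-complete directed family of separable AM subalgebras with dense union. By the remark following Lemma~\ref{L.AF-reflection}, that condition is \emph{equivalent} to $A$ being LM --- your reduction has silently discarded the global AF hypothesis. So if the proposed gluing recursion worked from this data alone, it would prove that every LM algebra is AM, which the paper itself refutes: $B_{[X]^2}$ with $|X|>\aleph_1$ is LM but not AF, hence not AM (Propositions~\ref{Prop:B=limCAR} and~\ref{P:notAF}). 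The two mechanisms you invoke also fail technically beyond the $\aleph_1$ setting: a recursion ``along the $\sigma$-complete family'' requires a well-ordered (or at least cofinal chain-like) index, but $[X]^{\aleph_0}$ has no cofinal chain when $X$ is uncountable; and choosing tolerances $2^{-n}$ so that the accumulated unitaries converge only makes sense along a sequence, not along an uncountable directed family. Your outline is in substance the paper's proof of Proposition~\ref{P.AF-aleph-1} (AF $=$ LF for character density $\leq\aleph_1$), which is precisely why that argument is confined to $\aleph_1$, whereas Proposition~\ref{P.AM=LM+AF} holds with no cardinality restriction.

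The paper's actual proof keeps the global AF structure and needs no separable reduction at all. Let $\{D_\lambda\}_{\lambda\in\Lambda}$ be a directed family of finite-dimensional subalgebras of $A$ with dense union. Fix $\lambda$ and a system $F$ of matrix units of $D_\lambda$; by LM choose a full matrix subalgebra $M_0$ of $A$ with $F\subseteq_\delta M_0$, and by Lemma~\ref{L.Davidson} a unitary $u$ with $uD_\lambda u^*\subseteq M_0$, i.e.\ $D_\lambda\subseteq u^*M_0u$. Now approximate a system of matrix units of $u^*M_0u$ by some $D_{\lambda'}$ with $\lambda'\succeq\lambda$ (dense union), and apply Lemma~\ref{L.Davidson} again to get $u'$ with $u'(u^*M_0u){u'}^*\subseteq D_{\lambda'}$ and --- this is the clause that replaces all your convergence bookkeeping --- $u'$ commuting with $(u^*M_0u)\cap D_{\lambda'}\supseteq D_\lambda$. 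Hence $M:=u'(u^*M_0u){u'}^*$ is a full matrix algebra with $D_\lambda\subseteq M\subseteq D_{\lambda'}$. The collection of all full matrix algebras sandwiched between members of $\{D_\lambda\}$ in this way is automatically directed by inclusion (pass to a $D_\mu$ above the two outer indices and sandwich again) and has dense union, so $A$ is AM. In short: the correct proof conjugates each finite-dimensional $D_\lambda$ \emph{into} a full matrix algebra and that full matrix algebra back \emph{into} the AF family, rather than trying to build a coherent global net of full matrix algebras by recursion; the second Davidson application fixing $D_\lambda$ pointwise is the idea your sketch is missing.
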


\begin{proof}
We only need to prove that
if a C*-algebra $A$ is LM and AF, then it is AM.
Take a directed family $\{D_\lambda\}_{\lambda\in\Lambda}$
of finite-dimensional subalgebras of $A$ with dense union.
To show that $A$ is AM,
it suffices to show that for any $\lambda\in\Lambda$
there exists a full matrix subalgebra $M$ containing $D_\lambda$
and contained in $D_{\lambda'}$ for some $\lambda' \succeq \lambda$.
Then the set of such full matrix subalgebras is directed
and has dense union.

Take $\lambda\in\Lambda$.
Let $F$ be a system of matrix units of $D_\lambda$.
Let $\delta>0$ be as in Lemma~\ref{L.Davidson} for $d=|F|\in\bbN$.
Since $A$ is LM,
it has a full matrix subalgebra $M_0$ such that $F\subseteq_\delta M_0$.
By Lemma~\ref{L.Davidson},
there exists a unitary $u$ in the unitization of $A$
satisfying $u D_\lambda u^* \subseteq M_0$.
Let $F'$ be a system of matrix units of $u^* M_0 u$.
Let $\delta'>0$ be as in Lemma~\ref{L.Davidson}
for $d=|F'|$.
Since $\{D_\lambda\}_{\lambda\in\Lambda}$ has dense union,
there exists $\lambda' \in \Lambda$ such that
$\lambda' \succeq \lambda$ and
$F' \subseteq_{\delta'} D_{\lambda'}$.
By Lemma~\ref{L.Davidson},
there exists a unitary $u'$ in the unitization of $A$
satisfying $u'(u^* M_0 u) {u'}^* \subseteq D_{\lambda'}$
and commuting with $(u^* M_0 u) \cap D_{\lambda'}$.
Since $D_\lambda \subseteq (u^* M_0 u) \cap D_{\lambda'}$,
the full matrix subalgebra
$M := u' (u^*M_0u){u'}^*$ of $A$
satisfies $D_\lambda \subseteq M \subseteq D_{\lambda'}$.
This completes the proof.
\end{proof}

By Proposition~\ref{P.AM=LM+AF},
the statement AM = LM is reduced to AF = LF
because LM implies LF.
Thus we only show that
AF = LF for character density at most $\aleph_1$
although the same argument as below works
for showing AM = LM directly
by just changing ``F'' to ``M'' and
``finite-dimensional'' to ``full matrix''
in all statements and proofs.

\begin{lemma} \label{L:well-known}
Let $A$ be a separable AF algebra.
For an increasing sequence $\{D_n\}_{n \in \bbN}$ of
finite-dimensional subalgebras of $A$
there exists an increasing sequence $\{D'_n\}_{n \in \bbN}$ of
finite-dimensional subalgebras with dense union
such that $\bigcup_{n \in \bbN}D_n \subseteq \bigcup_{n \in \bbN}D'_n$.
\end{lemma}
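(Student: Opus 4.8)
The plan is to replace the given sequence by a conjugate of a fixed generating one, using a single unitary that simultaneously straightens out every $D_n$. Since $A$ is separable and AF, I would first fix an increasing sequence $\{E_m\}_{m\in\bbN}$ of finite-dimensional subalgebras of $A$ with dense union (extract it from a directed family witnessing AF by running along a countable dense subset of $A$ and using directedness). The goal is then to produce a unitary $u$ in the unitization of $A$ together with indices $m_1 \leq m_2 \leq \cdots$ such that $u D_n u^* \subseteq E_{m_n}$ for every $n$. Granting this, I would set $D'_m := u^* E_m u$: these are finite-dimensional, increasing, and have dense union (since $\Ad(u^*)$ is an automorphism of $A$, as $A$ is an ideal in its unitization, and the union of the $E_m$ is dense), and they satisfy $D_n = u^*(uD_nu^*)u \subseteq u^* E_{m_n} u = D'_{m_n}$, so that $\bigcup_n D_n \subseteq \bigcup_m D'_m$, exactly as required.

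The unitary $u$ will be built as a norm-convergent infinite product $u = \lim_k w_k w_{k-1}\cdots w_1$ of unitaries $w_k$ close to $1$, defined by induction so that $D_1,\ldots,D_k$ are placed inside the tower after stage $k$. Writing $u_{k-1} := w_{k-1}\cdots w_1$ and assuming $u_{k-1}D_j u_{k-1}^* \subseteq E_{m_j}$ for $j<k$, I would put $\tilde D_k := u_{k-1}D_k u_{k-1}^*$, let $\delta$ be as in Lemma~\ref{L.Davidson} for $d = \dim D_k$ and $\e = 2^{-k}$, choose $m_k \geq m_{k-1}$ so large that a system $F$ of matrix units of $\tilde D_k$ satisfies $F \subseteq_\delta E_{m_k}$ (possible since the union of the $E_m$ is dense), and apply Lemma~\ref{L.Davidson} to get a unitary $w_k$ with $\|w_k - 1\| < 2^{-k}$, $w_k \tilde D_k w_k^* \subseteq E_{m_k}$, and $w_k$ commuting with $\tilde D_k \cap E_{m_k}$. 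The bound $\sum_k \|w_k - 1\| < \infty$ guarantees that $u_k := w_k u_{k-1}$ converges in norm to a unitary $u$.

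The heart of the matter, and the step I expect to be the main obstacle, is ensuring that $w_k$ does not undo the placements already achieved for $D_1,\ldots,D_{k-1}$, so that all the relations $u D_n u^* \subseteq E_{m_n}$ persist in the limit; this is precisely where the ``commuting with $D\cap B$'' clause of Lemma~\ref{L.Davidson} is indispensable. Since $D_{k-1}\subseteq D_k$ and $m_{k-1}\leq m_k$, the algebra $u_{k-1}D_{k-1}u_{k-1}^*$ lies in both $\tilde D_k$ and $E_{m_{k-1}}\subseteq E_{m_k}$, hence in $\tilde D_k \cap E_{m_k}$, so $w_k$ commutes with it and therefore fixes $u_{k-1}D_j u_{k-1}^*$ for every $j<k$ (these all sit inside $u_{k-1}D_{k-1}u_{k-1}^*$). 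Thus $u_k D_j u_k^* = u_{k-1}D_j u_{k-1}^* \subseteq E_{m_j}$ for $j<k$ and $u_k D_k u_k^* \subseteq E_{m_k}$, closing the induction; and since each $E_{m_j}$ is closed with $u_k D_j u_k^* \subseteq E_{m_j}$ for all $k\geq j$, letting $u_k \to u$ gives $u D_j u^* \subseteq E_{m_j}$ for every $j$, which is all that the first paragraph needs.
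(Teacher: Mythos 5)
Your proof is correct and takes essentially the same route as the paper's: both construct a single unitary $u$ as a norm-convergent product of unitaries supplied by Lemma~\ref{L.Davidson} (using the quantitative clause $\|u-1\|<\e$ for convergence and the commutation with $D\cap B$ to keep the earlier placements $u_{k-1}D_ju_{k-1}^*\subseteq E_{m_j}$ intact), and both then define $D'_n$ as the $u^*$-conjugates of a fixed increasing generating sequence of finite-dimensional subalgebras. The only difference is bookkeeping: you maintain the invariant that each new $w_k$ fixes the already-placed algebras elementwise, whereas the paper records the equivalent fact that each conjugated $D_n$ commutes with all later unitaries and passes to the limit.
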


\begin{proof}
This is well-known to specialists, and can be shown in a similar way
to \cite[Theorem~III.3.5]{Dav:C*}. For the reader's convenience, we
give a proof.

Let $\{B_k\}_{k \in \bbN}$ be an increasing sequence of
finite-dimensional subalgebras of $A$ with dense union.
We construct inductively an increasing sequence
$\{k_n\}_{n \in \bbN}$ in $\bbN$ and
a sequence $\{u_n\}_{n \in \bbN}$
of unitaries in the unitization of $A$
with $\|u_n-1\|<2^{-n}$
such that for each $n\in \bbN$
the finite-dimensional algebra
\[
u_{n} \cdots u_2 u_1 D_n u_1^* u_2^* \cdots u_{n}^*
\]
is contained in $B_{k_n}$ and commutes with $u_{n+1}$.
We first construct $k_1$ and $u_1$.
Choose $k_1 \in \bbN$ such that $F \subseteq_{\delta} B_{k_1}$
where $F$ is a system of matrix units of $D_1$
and $\delta>0$ be as in
the latter statement of Lemma~\ref{L.Davidson}
for $d=|F|$ and $\e=2^{-1}$.
By Lemma~\ref{L.Davidson},
there exists a unitary $u_1$ in the unitization of $A$
satisfying $u_1D_1 u_1^* \subseteq B_{k_1}$
and $\|u_1-1\|<1/2$.
Suppose that $k_1,\ldots,k_{n-1} \in \bbN$ and
unitaries $u_1,\ldots,u_{n-1}$ were chosen.
Choose $k_n \in \bbN$ such that $k_n > k_{n-1}$ and
$F'\subseteq_{\delta'} B_{k_n}$
where $F'$ is a system of matrix units of
\[
u_{n-1}\cdots u_2 u_1 D_n u_1^* u_2^* \cdots u_{n-1}^*
\]
and $\delta'>0$ be as in
the latter statement of Lemma~\ref{L.Davidson}
for $d=|F'|$ and $\e=2^{-n}$.
Lemma~\ref{L.Davidson} gives us
a unitary $u_n$ in the unitization of $A$
satisfying
\[
u_{n} u_{n-1} \cdots u_2 u_1 D_n u_1^* u_2^* \cdots u_{n-1}^* u_{n}^*
\subseteq B_{k_n},
\]
commuting with
\[
u_{n-1}\cdots u_2 u_1 D_{n-1} u_1^* u_2^* \cdots u_{n-1}^*
\subseteq u_{n-1}\cdots u_2 u_1 D_{n} u_1^* u_2^* \cdots u_{n-1}^*
\cap B_{k_n}
\]
and satisfying $\|u_n-1\|<2^{-n}$.
Thus we get the desired sequences
$\{k_n\}_{n \in \bbN}$ and $\{u_n\}_{n \in \bbN}$.

Since $\|u_n-1\|<2^{-n}$ for all $n \in \bbN$
and $\sum_{n\in\bbN}2^{-n}=1<\infty$,
the sequence $\{u_{n} \cdots u_2 u_1\}_{n\in\bbN}$
converges to a unitary $u$ in the unitization of $A$.
Since
\begin{align*}
u_{n} \cdots u_2 u_1 D_n u_1^* u_2^* \cdots u_{n}^*
&=u_{n+1}
u_{n} \cdots u_2 u_1 D_n u_1^* u_2^* \cdots u_{n}^*u_{n+1}^*\\
&\subseteq
u_{n+1}
u_{n} \cdots u_2 u_1 D_{n+1} u_1^* u_2^* \cdots u_{n}^*u_{n+1}^*,
\end{align*}
$u_{n} \cdots u_2 u_1 D_n u_1^* u_2^* \cdots u_{n}^*$
commutes with $u_{n+2}$.
By repeating this argument,
one can see that
$u_{n} \cdots u_2 u_1 D_n u_1^* u_2^* \cdots u_{n}^*$
commutes with $u_{m}$ for all $m>n$.
Hence we get $uD_nu^*=u_{n} u_{n-1} \cdots u_2 u_1 D_n u_1^* u_2^* \cdots u_{n-1}^* u_{n}^*\subseteq B_{k_n}$.
We set $D'_n := u^* B_{k_n}u$ for $n \in \bbN$.
Then $\{D'_n\}_{n \in \bbN}$ is an increasing sequence of
finite-dimensional subalgebras with dense union
such that $\bigcup_{n \in \bbN}D_n \subseteq \bigcup_{n \in \bbN}D'_n$.
\end{proof}

In the next lemma,
for two families
$\Upsilon = \{D_\lambda\}_{\lambda \in \Lambda}$ and
$\Upsilon' = \{D'_\lambda\}_{\lambda \in \Lambda'}$
of subalgebras,
$\Upsilon \subseteq \Upsilon'$ means that
$\Lambda \subseteq \Lambda'$ and $D_\lambda=D'_\lambda$
for each $\lambda \in \Lambda$.

\begin{lemma} \label{L.LF-ctble}
Let $A$ be a separable AF algebra contained
in a separable AF algebra $A'$.
For a countable directed family $\Upsilon$
of finite-dimensional subalgebras of $A$ with dense union,
there exists a countable directed family
$\Upsilon'$ of finite-dimensional subalgebras of $A'$ with dense union
such that $\Upsilon \subseteq \Upsilon'$.
\end{lemma}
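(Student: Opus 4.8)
The plan is to reduce the countable directed family $\Upsilon$ to a cofinal increasing sequence, apply Lemma~\ref{L:well-known} inside the larger algebra $A'$, and then reassemble the result into a single countable directed family. First I would use that a countable directed set has a cofinal increasing sequence: enumerating the index set $\Lambda=\{\lambda_1,\lambda_2,\ldots\}$ and repeatedly invoking directedness, I build $\mu_1\preceq\mu_2\preceq\cdots$ in $\Lambda$ that is cofinal. Writing $E_n$ for the subalgebra of $\Upsilon$ indexed by $\mu_n$, the sequence $\{E_n\}_{n\in\bbN}$ is an increasing sequence of finite-dimensional subalgebras of $A$, and by cofinality $\bigcup_n E_n=\bigcup_{\lambda\in\Lambda}D_\lambda$, which is dense in $A$.

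Next, viewing each $E_n$ as a finite-dimensional subalgebra of the separable AF algebra $A'$, I would apply Lemma~\ref{L:well-known} to the increasing sequence $\{E_n\}$ \emph{inside} $A'$. This yields an increasing sequence $\{E'_n\}_{n\in\bbN}$ of finite-dimensional subalgebras of $A'$ with dense union in $A'$ and with $\bigcup_n E_n\subseteq\bigcup_n E'_n$. This is the step that does the real work: the sequence $\{E_n\}$ alone has dense union only in the proper subalgebra $A$, and Lemma~\ref{L:well-known} thickens it to a sequence that is dense in all of $A'$ while keeping every old algebra inside its union.

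Finally I would let $\Upsilon'$ be the family consisting of all $D_\lambda$ ($\lambda\in\Lambda$) together with all $E'_n$ ($n\in\bbN$), a countable collection of finite-dimensional subalgebras of $A'$ ordered by inclusion. It contains $\Upsilon$ by construction and has dense union in $A'$ because the $E'_n$ already do. The remaining point is directedness. Two members of $\Upsilon$ have an upper bound in $\Upsilon$, and two of the $E'_n$ have the larger of them as an upper bound; the interesting case is a mixed pair $D_\lambda,E'_n$. Here I would argue that $D_\lambda\subseteq E_m$ for some $m$ (by cofinality of $\{\mu_n\}$), and since $E_m$ is finite-dimensional and $\bigcup_k E'_k$ is an \emph{increasing} union, $E_m$, and hence $D_\lambda$, lies inside a single $E'_k$; then $E'_{\max(k,n)}$ is a common upper bound of $D_\lambda$ and $E'_n$ belonging to $\Upsilon'$.

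I expect the only genuine obstacle to be producing a family whose union is dense in $A'$ rather than merely in $A$, which is exactly what Lemma~\ref{L:well-known} supplies; everything else is the bookkeeping of cofinality together with the finite-dimensionality argument that places each old algebra inside a single new one. One minor formal point is the injectivity of the index-to-subalgebra assignment required of a directed family: should some $E'_n$ happen to coincide with an existing $D_\lambda$, I simply omit the duplicate index, so that $\Upsilon'$ remains a legitimate directed family with $\Lambda\subseteq\Lambda'$ and $D_\lambda=D'_\lambda$ for $\lambda\in\Lambda$.
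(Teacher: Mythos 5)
Your proposal is correct and follows essentially the same route as the paper's proof: both reduce the countable directed family to a cofinal increasing sequence, apply Lemma~\ref{L:well-known} inside $A'$ to obtain an increasing sequence $\{E'_n\}$ with dense union absorbing the old one, and then adjoin it to $\Upsilon$, using finite-dimensionality to place each $D_\lambda$ inside a single $E'_k$ for directedness. Your remark about omitting duplicate indices to preserve the injectivity required of a directed family is a minor point the paper leaves implicit.
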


\begin{proof}
Let us write $\Upsilon = \{D_\lambda\}_{\lambda \in \Lambda}$.
Since $\Lambda$ is countable,
we can choose a subsequence $\{\lambda_n\}_{n \in \bbN}$
of $\Lambda$ such that
$\bigcup_{\lambda \in \Lambda}D_\lambda=\bigcup_{n \in \bbN}
D_{\lambda_n}$.
By Lemma~\ref{L:well-known},
there exists an increasing sequence $\{D'_n\}_{n \in \bbN}$
of finite-dimensional subalgebras of $A'$ with dense union
such that $\bigcup_{n \in \bbN}D_{\lambda_n}
\subseteq \bigcup_{n \in \bbN} D'_n$.
For each $\lambda \in \Lambda$,
there exists $n \in \bbN$ such that $D_\lambda \subseteq D'_n$
because $D_\lambda$ is finite-dimensional.
Let $\Lambda' := \Lambda \amalg \bbN$, ordered by
requiring that $\Lambda$ and $\bbN$ have their natural orderings and
$\lambda \preceq n$ if $D_\lambda \subseteq D'_n$.
Then the family $\Upsilon' := \{D'_\lambda\}_{\lambda \in \Lambda'}$
defined by $D'_\lambda := D_\lambda$ for $\lambda \in \Lambda$
satisfies the desired properties.
\end{proof}

\begin{lemma} \label{L.LF-aleph1}
Each LF algebra of character density at most $\aleph_1$
has a $\sigma$-complete directed family of
separable AF subalgebras with dense union indexed
by the ordinal $\omega_1$.
\end{lemma}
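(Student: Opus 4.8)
The plan is to build the desired $\sigma$-complete family indexed by $\omega_1$ by a transfinite recursion of length $\omega_1$, at each step enlarging a separable subalgebra to a separable \emph{AF} subalgebra using the reflection machinery already set up in the excerpt. The starting point is Lemma~\ref{L.AF-reflection}: an LF algebra $A$ has a $\sigma$-complete directed family $\{A_\lambda\}_{\lambda \in \Lambda}$ of separable \emph{AF} subalgebras with dense union (indexed by some $\sigma$-complete directed set $\Lambda$). Since $\chi(A) \le \aleph_1$, I may also fix an enumeration $\{a_\xi : \xi < \omega_1\}$ of a dense subset of $A$. The goal is to produce a $\subseteq$-increasing, continuous chain $\{E_\xi\}_{\xi < \omega_1}$ of separable AF subalgebras whose union is dense; the indexing by the \emph{ordinal} $\omega_1$ (rather than an abstract club) is exactly what must be arranged.

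First I would set up the recursion. At successor stages $\xi + 1$, given the separable AF algebra $E_\xi$, I want $E_{\xi+1}$ to be a separable AF algebra that contains $E_\xi$, contains $a_\xi$, and is one of the $A_\lambda$'s (or at least AF). By Lemma~\ref{L:s-comp union}, applied to the separable subalgebra $C^*(E_\xi \cup \{a_\xi\})$, there is some $\lambda \in \Lambda$ with $C^*(E_\xi \cup \{a_\xi\}) \subseteq A_\lambda$; since this $A_\lambda$ is separable and AF, I set $E_{\xi+1} := A_\lambda$. At limit stages $\xi$ of countable cofinality, I set $E_\xi := \overline{\bigcup_{\eta < \xi} E_\eta}$. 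The crucial point is that this union is again a member of the family $\{A_\lambda\}$: because the chain $\{E_\eta : \eta < \xi\}$ is a countable directed subset whose elements lie in the $\sigma$-complete family (each $E_{\eta+1}$ is some $A_\lambda$), its closure is again in the family by $\sigma$-completeness, hence separable and AF. So continuity is maintained and every $E_\xi$ is separable AF.

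The properties then fall out: the chain is increasing and continuous by construction, so $\{E_\xi\}_{\xi < \omega_1}$ is a $\sigma$-complete directed family indexed by $\omega_1$ (here I use that a countable directed $Z \subseteq \omega_1$ has a supremum $< \omega_1$ of countable cofinality, at which continuity was arranged). Its union contains every $a_\xi$ by the successor step, hence is dense in $A$. The only care needed is to guarantee that \emph{each} $E_\xi$ is genuinely an element of the original $\sigma$-complete family so that $\sigma$-completeness of that family can be invoked at limits; this is handled uniformly by always choosing successors from among the $A_\lambda$ and closing up at limits.

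\textbf{The main obstacle} I expect is the continuity at limit stages of countable cofinality: one must verify that $\overline{\bigcup_{\eta<\xi} E_\eta}$ is itself separable and AF rather than merely LF. This is precisely where $\sigma$-completeness of the family $\{A_\lambda\}_{\lambda\in\Lambda}$ from Lemma~\ref{L.AF-reflection} is essential --- it guarantees the closure of a countable directed union of members is again a member (and AF), so no appeal to Bratteli's theorem is needed at the limit. A minor bookkeeping subtlety is that $\Lambda$ is an abstract $\sigma$-complete directed set, not a well-order, so the recursion must be driven by the ordinal $\omega_1$ and the dense enumeration $\{a_\xi\}$, using $\Lambda$ only as a reservoir of separable AF subalgebras accessed through Lemma~\ref{L:s-comp union}; this reindexing is what converts the abstractly-indexed family into one indexed by $\omega_1$.
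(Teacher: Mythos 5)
Your proof is correct, but it takes a different route from the paper's. The paper first builds a canonical $\omega_1$-filtration $A_\lambda := C^*(\{x_\gamma : \gamma < \lambda\})$ from a dense $\aleph_1$-enumeration, takes the $\sigma$-complete family of separable AF subalgebras from Lemma~\ref{L.AF-reflection}, and then invokes Proposition~\ref{Prop:find_club} (with $\Phi = \id$) to extract a club $\Lambda \subseteq \omega_1$ on which the filtration members coincide with members of the AF family, hence are AF; since a club in $\omega_1$ is order-isomorphic to $\omega_1$, reindexing finishes the proof. You instead run the back-and-forth by hand: a transfinite recursion of length $\omega_1$ in which successors are chosen inside the AF family via Lemma~\ref{L:s-comp union} and limits are closures of unions, with the inductive invariant that every $E_\xi$ is literally a member of the $\sigma$-complete family --- which is exactly what makes the limit stages yield AF (not merely LF) algebras without any appeal to Bratteli's theorem, as you correctly flag. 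In effect your recursion inlines the cofinality argument hidden inside the proof of Proposition~\ref{Prop:find_club}, so your argument is more self-contained (it bypasses Proposition~\ref{Prop:find_club} and Lemma~\ref{Lem:phi=id} entirely), while the paper's version is shorter given the club machinery already developed and reuses a tool that serves elsewhere (e.g., in Proposition~\ref{P.UHF-criterion}). Two small points you should patch for completeness: specify the base case (take $E_0$ to be any member $A_\lambda$ of the family), and if you want the indexing $\xi \mapsto E_\xi$ to be injective as the paper's convention for directed families demands, enforce strict growth at successors by additionally absorbing the least $a_{\xi'} \notin E_\xi$ (possible whenever $\chi(A) = \aleph_1$, since then each separable $E_\xi$ is a proper closed subalgebra and the enumeration is dense; for separable $A$ the lemma is anyway subsumed by Bratteli's theorem, a caveat the paper's own proof shares).
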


\begin{proof}
Let $A$ be an LF algebra with $\chi(A) \leq \aleph_1$.
Fix a dense subset $\{x_\gamma: \gamma \in \omega_1\}$ of $A$,
and define $A_\lambda := C^*(\{x_\gamma: \gamma<\lambda\})$
for each $\lambda \in \omega_1$.
Then $\{A_\lambda\}_{\lambda \in \omega_1}$ is
a $\sigma$-complete directed family of separable subalgebras of $A$.
By Lemma~\ref{L.AF-reflection},
$A$ also has a $\sigma$-complete direct family of
separable AF subalgebras with dense union.
By Proposition~\ref{Prop:find_club} applied with $\id\colon A\to A$,
there is a club $\Lambda\subseteq\omega_1$
such that $A_\lambda$ is AF for $\lambda\in \Lambda$.
As ordered sets, $\Lambda$ is isomorphic to $\omega_1$,
and $\{A_\lambda\}_{\lambda \in \Lambda}$ is
the desired family.
\end{proof}

\begin{prop} \lbl{P.AF-aleph-1}
Each LF algebra of character density at most
$\aleph_1$ is an AF algebra.
\end{prop}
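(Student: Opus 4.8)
The plan is to feed the $\sigma$-complete chain supplied by Lemma~\ref{L.LF-aleph1} into the coherent extension mechanism of Lemma~\ref{L.LF-ctble}, building a single directed family of finite-dimensional subalgebras of $A$ by transfinite recursion along $\omega_1$. Concretely, let $A$ be an LF algebra with $\chi(A)\le\aleph_1$. By Lemma~\ref{L.LF-aleph1} I would fix a $\sigma$-complete directed family $\{A_\lambda\}_{\lambda\in\omega_1}$ of separable AF subalgebras of $A$ with dense union, indexed by the ordinal $\omega_1$; thus $A_\lambda\subseteq A_\mu$ for $\lambda\le\mu$ and, by $\sigma$-completeness, $A_\mu=\overline{\bigcup_{\lambda<\mu}A_\lambda}$ for every limit ordinal $\mu<\omega_1$. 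I would then construct, by recursion on $\lambda<\omega_1$, a countable directed family $\Upsilon_\lambda$ of finite-dimensional subalgebras of $A_\lambda$ with dense union, subject to the coherence requirement $\Upsilon_\lambda\subseteq\Upsilon_\mu$ (in the sense explained just before Lemma~\ref{L.LF-ctble}) whenever $\lambda\le\mu$.

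The recursion has three cases. At the base, $A_0$ is a separable AF algebra, so it carries some countable directed family $\Upsilon_0$ of finite-dimensional subalgebras with dense union. At a successor step I would apply Lemma~\ref{L.LF-ctble} to the inclusion $A_\lambda\subseteq A_{\lambda+1}$ of separable AF algebras and to the family $\Upsilon_\lambda$, obtaining a countable directed family $\Upsilon_{\lambda+1}$ of finite-dimensional subalgebras of $A_{\lambda+1}$ with dense union and with $\Upsilon_\lambda\subseteq\Upsilon_{\lambda+1}$. At a limit ordinal $\mu<\omega_1$ I would simply set $\Upsilon_\mu:=\bigcup_{\lambda<\mu}\Upsilon_\lambda$; since $\mu$ is countable this is a countable family, its members are finite-dimensional subalgebras of $A_\mu$, and it is directed because any two of its members already lie in a common $\Upsilon_\lambda$. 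Its union is dense in $A_\mu$ because it contains a dense subset of each $A_\lambda$ for $\lambda<\mu$ and $A_\mu=\overline{\bigcup_{\lambda<\mu}A_\lambda}$. These verifications restore the recursion hypotheses, so the construction proceeds through all $\lambda<\omega_1$.

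Finally I would take $\Upsilon:=\bigcup_{\lambda<\omega_1}\Upsilon_\lambda$, viewed as a family ordered by inclusion. Every member of $\Upsilon$ is a finite-dimensional subalgebra of $A$; the family is directed since any two of its members lie in some common $\Upsilon_\lambda$, which is directed; and its union is dense in $A$ because it contains a dense subset of each $A_\lambda$ while $\bigcup_{\lambda<\omega_1}A_\lambda$ is dense in $A$. Hence $A$ is AF.

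I expect the substantive difficulty to be concentrated entirely in the successor step, that is, in the coherent enlargement of a finite-dimensional family when passing from $A_\lambda$ to the larger separable AF algebra $A_{\lambda+1}$; but this is exactly the content of Lemma~\ref{L.LF-ctble} (which in turn rests on the perturbation estimate of Lemma~\ref{L.Davidson} via Lemma~\ref{L:well-known}), so once that lemma is invoked the remaining obstacles are only the bookkeeping at limit stages and the routine directedness and density checks described above. The use of $\omega_1$ rather than a general $\sigma$-complete directed set is what makes each limit stage a union of only countably many countable families and keeps every $\Upsilon_\lambda$ separable, so that Lemma~\ref{L.LF-ctble} applies at every successor.
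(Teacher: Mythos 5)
Your proposal is correct and follows essentially the same route as the paper's proof: a transfinite recursion along the $\omega_1$-indexed family from Lemma~\ref{L.LF-aleph1}, invoking Lemma~\ref{L.LF-ctble} at successor stages, taking unions at limit stages (where $\sigma$-completeness guarantees density in $A_\eta$), and finally setting $\Upsilon=\bigcup_{\xi\in\omega_1}\Upsilon_\xi$. Your limit-stage and directedness checks merely spell out verifications the paper leaves implicit.
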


\begin{proof}
Let $A$ be an LF algebra with $\chi(A) \leq \aleph_1$.
Let $\{A_\xi\}_{\xi \in \omega_1}$ be
a $\sigma$-complete directed family of
separable AF subalgebras of $A$
with dense union as in Lemma~\ref{L.LF-aleph1}.
Using transfinite recursion,
we are going to construct an increasing family of
countable directed families
$\Upsilon_\xi$ of finite-dimensional subalgebras
whose union is dense in $A_\xi$ for each $\xi \in \omega_1$.
For $\xi =0$,
choose an increasing sequence of
finite-dimensional subalgebras of $A_0$ with dense union,
and set it $\Upsilon_0$.
If $\Upsilon_\xi$ has been defined,
then $\Upsilon_{\xi+1}$ is defined using Lemma~\ref{L.LF-ctble}.
If $\eta$ is a limit ordinal and $\Upsilon_\xi$ has been defined
for all $\xi<\eta$,
let $\Upsilon_\eta=\bigcup_{\xi<\eta} \Upsilon_\xi$.
Since $A_\eta$ is the closure of the union of $\{A_\xi\}_{\xi<\eta}$,
$\Upsilon_\eta$ is as required.

Finally let $\Upsilon=\bigcup_{\xi \in \omega_1}\Upsilon_\xi$.
Then this is a directed family of finite-dimensional subalgebras of $A$
with dense union.
Thus $A$ is an AF algebra.
\end{proof}

The example of the following section easily shows that the version of
Lemma~\ref{L.LF-ctble} for nonseparable algebras is false.

\section{AM $\neq$ LM and AF $\neq$ LF for character density $> \aleph_1$}
\label{Sec:AM neq LM}

In this section, we construct an LM algebra which is not AF. This
C*-algebra shows the difference between the classes of AM and LM
algebras as well as between the classes of AF and LF algebras. To
show that a given C*-algebra is not AF, we use the following
criterion.

The converse direction in the following lemma was proved 
by George Elliott, 
following a remark  by Tamas Matrai, during the first author's talk at a set theory seminar in Toronto
in April 2009.  

\begin{lemma} \label{L.LM-criterion}
A C*-algebra  $A$ is AF if and only if 
there exists a map $\rho \colon A \to A$ 
such that $\|a - \rho(a) \| < 1$ for every $a \in A$ 
and $C^*(\{\rho(a)\}_{a\in F})$ is finite-dimensional
for every finite subset $F$ of $A$.
\end{lemma}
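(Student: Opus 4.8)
The plan is to prove both directions of the equivalence. The forward direction is easy: if $A$ is AF, fix a directed family $\{D_\lambda\}_{\lambda \in \Lambda}$ of finite-dimensional subalgebras with dense union. Given $a \in A$, choose some $\lambda$ with $a \in_{1/2} D_\lambda$ and let $\rho(a)$ be a nearest point (or any point within $1/2$) of $D_\lambda$; but to make $C^*(\{\rho(a)\}_{a \in F})$ finite-dimensional for \emph{every} finite $F$ simultaneously, I would instead arrange that all values $\rho(a)$ land inside a single finite-dimensional algebra once $F$ is fixed. The clean way is: for each $a$, pick $\lambda(a)$ with $a \in_1 D_{\lambda(a)}$ and set $\rho(a) \in D_{\lambda(a)}$ with $\|a - \rho(a)\| < 1$. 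Given finite $F$, by directedness there is a single $\mu$ with $D_{\lambda(a)} \subseteq D_\mu$ for all $a \in F$, so $C^*(\{\rho(a)\}_{a \in F}) \subseteq D_\mu$ is finite-dimensional. This handles the ``only if'' direction.

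The substantial direction is the converse: assume such a $\rho$ exists and deduce $A$ is AF. By Bratteli's theorem (\cite[Theorem~2.2]{Bratt:Inductive}) and Proposition~\ref{P.AF-aleph-1}, it suffices to verify the local condition LF, since the existence of $\rho$ is a local/separable property and the machinery of $\sigma$-complete directed families (Lemma~\ref{L.AF-reflection}) will then reflect AF-ness down to separable subalgebras. Concretely, I would aim to show $A$ is LF directly: given a finite $F \subseteq A$ and $\e > 0$, I must produce a finite-dimensional subalgebra $D$ with $F \subseteq_\e D$. The idea is to iterate $\rho$ and rescale. Starting from $F$, consider the finite-dimensional algebra $D_0 = C^*(\{\rho(a)\}_{a \in F})$, which approximates each $a \in F$ to within $1$. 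The constant $1$ is not yet smaller than $\e$, so the main work is an amplification argument to boost the approximation.

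The hard part will be bootstrapping the fixed error bound $1$ down to an arbitrarily small $\e$. The natural device is matrix amplification: apply the hypothesis not to $A$ itself but to $M_k(A)$, or equivalently exploit that the condition with constant $1$ in a suitable ``scaled'' copy yields constant $1/k$ back in $A$. Specifically, I expect one applies $\rho$ (or an induced map) inside $A \otimes M_k(\bbC)$, where an element like $a \otimes 1$ sits at distance $<1$ from a finite-dimensional algebra; decomposing and using that the map respects the matrix structure forces the genuine approximation of $a$ by a finite-dimensional subalgebra of $A$ to within roughly $1/k$. Choosing $k > 1/\e$ then gives $F \subseteq_\e D$ for a finite-dimensional $D$, establishing LF and hence, via Bratteli together with the reflection arguments of this paper, that $A$ is AF. Verifying that the finite-dimensional algebra produced in the amplified setting can be pushed back to a genuine finite-dimensional subalgebra of $A$ generating the required approximations — and that finitely many elements are handled at once — is the delicate bookkeeping I would need to carry out carefully.
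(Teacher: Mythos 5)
Your forward direction matches the paper's proof exactly, but the converse has two genuine gaps. First, the amplification device cannot get off the ground: the hypothesis provides only a bare map $\rho\colon A\to A$, with no linearity, continuity, or multiplicativity assumed, so there is no induced map on $A\otimes M_k(\bbC)$ and no sense in which $\rho$ could ``respect the matrix structure.'' What actually does the work is a much simpler \emph{scalar} rescaling, exploiting that $\rho$ is defined on all of $A$ with a uniform additive error bound: given $a\in A$ and $\e>0$, apply $\rho$ to $a/\e$; from $\|a/\e-\rho(a/\e)\|<1$ you get $\|a-\e\,\rho(a/\e)\|<\e$, and $\e\,\rho(a/\e)$ lies in the finite-dimensional algebra $C^*(\rho(a/\e))$. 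That single observation replaces your entire amplification scheme.

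Second, and more seriously, your proposed route --- prove LF, then upgrade to AF via Bratteli and the reflection machinery --- is false in the generality required, and circular in context. Bratteli's theorem applies only to separable algebras; Lemma~\ref{L.AF-reflection} upgrades LF only to the existence of a $\sigma$-complete directed family of separable AF subalgebras, which is \emph{equivalent} to LF, not to AF; and Proposition~\ref{P.AF-aleph-1} requires character density at most $\aleph_1$. For character density greater than $\aleph_1$ the paper shows LF does \emph{not} imply AF, and the proof of that very fact (Proposition~\ref{P:notAF}) is the place where the present lemma is applied --- to a nonseparable algebra --- so your reduction would render the paper's main argument circular. The paper's proof avoids LF entirely and produces AF directly: index by the finite subsets $\lambda$ of $A$ and set $A_\lambda=C^*(\{\rho(a)\}_{a\in\lambda})$; these form a directed family of finite-dimensional subalgebras, and the rescaling above shows their union is dense, so $A$ is AF with no detour through local finite-dimensionality.
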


\begin{proof} 
Assume $A$ is AF and 
let $\{A_\lambda\}_{\lambda \in \Lambda}$ be 
a directed family of finite-dimensional subalgebras of $A$ 
with dense union.
For each $a\in A$
there exists $\lambda_a \in \Lambda$ 
such that there exists $\rho(a)\in A_{\lambda_a}$ 
with $\|a - \rho(a)\| < 1$.
For every finite subset $F$ of $A$
there exists $\lambda \in \Lambda$ such that 
$\lambda \succeq \lambda_a$ for all $a \in F$.
Then $C^*(\{\rho(a)\}_{a\in F}) \subseteq A_\lambda$ is finite-dimensional.

Now assume that $\rho \colon A \to A$ is 
as in the statement of the lemma. 
If $\Lambda$ is the family of all finite subsets of $A$ 
then $A_\lambda=C^*(\{\rho(a)\}_{a\in \lambda})$ 
form a directed family of finite-dimensional subalgebras of $A$. 
Fix $a\in A$ and $\e>0$. 
Let $\lambda=\{a/\e\}$. 
Then $\e \rho(a/\e)\in A_{\lambda}$ and $\|a- \e \rho(a/\e)\|<\e$. 
Since $a$ and $\e$ were arbitrary, we conclude $A$ is AF. 
\end{proof}

We also use the following lemma 
(for the case when $A$ is the CAR algebra) 
in the proof of Proposition~\ref{P:notAF}

\begin{lemma}\label{Lem:lin.indep}
Let $A$ be a unital LM subalgebra of a unital C*-algebra $B$. 
Take $a_1,a_2,\ldots,a_n \in A$ 
and $b_1,b_2,\ldots,b_n \in Z_B(A)$. 
If $(a_i)_{i=1}^n$ is linearly independent in $A$ 
and $\sum_{i=1}^na_ib_i=0$ in $B$, then
we have $b_i=0$ for all $i$.
\end{lemma}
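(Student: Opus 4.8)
The plan is to reduce the statement to a standard linear-independence fact about tensor products and then to extract each $b_j$ by a slice map. First I would set $C := Z_B(A)$ and note that $A$ and $C$ are unital subalgebras of $B$ commuting with each other (the former by hypothesis, the latter by definition of the relative commutant, and both contain $1_B$). Applying Lemma~\ref{Lem:LM times B} with this commuting pair inside $D := B$, the LM hypothesis on $A$ gives that the natural map $A\otimes C \to C^*(A\cup C)$ is an isomorphism. Under this identification the hypothesis $\sum_{i=1}^n a_i b_i = 0$ in $B$ becomes the relation $\sum_{i=1}^n a_i\otimes b_i = 0$ in $A\otimes C$. It is exactly the injectivity supplied by Lemma~\ref{Lem:LM times B} that is used here; as the remark after that lemma shows (with $A=B=D=\bbC\oplus\bbC$), this step genuinely requires the LM hypothesis.

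Next I would use the linear independence of $(a_i)_{i=1}^n$ in the Banach space $A$ to produce a biorthogonal system of functionals. The span of $a_1,\dots,a_n$ is an $n$-dimensional subspace of $A$, on which the dual basis furnishes linear functionals with $\phi_j(a_i)=\delta_{ij}$; by the Hahn--Banach theorem each extends to some $\phi_j\in A^*$ satisfying $\phi_j(a_i)=\delta_{ij}$ for all $i,j$. Then, for each fixed $j$, I would apply the slice map $\phi_j\otimes\id_C\colon A\otimes C\to C$, characterized by $(\phi_j\otimes\id_C)(a\otimes b)=\phi_j(a)\,b$, to the relation $\sum_i a_i\otimes b_i=0$. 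This immediately yields $b_j=\sum_i \phi_j(a_i)\,b_i=0$, which is the desired conclusion.

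The one point requiring care, and the main obstacle, is the existence and boundedness of these slice maps on the (maximal) tensor product used throughout the paper. For a \emph{state} $\phi$ on $A$, the functional $\phi\colon A\to\bbC$ is completely positive, so by functoriality of the maximal tensor product with respect to completely positive maps the pair $(\phi,\id_C)$ induces a well-defined completely positive map $\phi\otimes\id_C\colon A\otimes C\to \bbC\otimes C = C$ with the stated action on elementary tensors. Writing an arbitrary $\phi_j\in A^*$ as a linear combination of at most four states then shows that $\phi_j\otimes\id_C$ is well-defined and bounded as a linear combination of such slice maps. This argument stays entirely within the maximal tensor product and avoids any appeal to nuclearity or to the distinction between maximal and minimal tensor products.
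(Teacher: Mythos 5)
Your proof is correct, and its first half is identical to the paper's: both arguments invoke Lemma~\ref{Lem:LM times B} for the commuting pair $A$ and $Z_B(A)$ inside $B$ to convert the relation $\sum_{i=1}^n a_ib_i=0$ in $B$ into $\sum_{i=1}^n a_i\otimes b_i=0$ in the maximal tensor product $A\otimes Z_B(A)$. Where you genuinely diverge is the second half. The paper finishes by citing the fact that the algebraic tensor product of $A$ and $Z_B(A)$ embeds injectively into the maximal tensor product (\cite[II.9.1.3]{Black:Operator}), after which the conclusion is pure multilinear algebra about elementary tensors. You instead extract each $b_j$ directly: Hahn--Banach supplies functionals $\phi_j\in A^*$ biorthogonal to the $a_i$, and the slice map $\phi_j\otimes\id$ applied to $\sum_i a_i\otimes b_i=0$ yields $b_j=0$. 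Your justification of these slice maps on the \emph{maximal} tensor product is the right one: a state is completely positive, the maximal tensor product is functorial for completely positive maps, and an arbitrary bounded functional is a linear combination of at most four states, which makes $\phi_j\otimes\id$ well defined and bounded (independent of the chosen decomposition, since all these maps agree on the dense span of elementary tensors). In effect you re-prove exactly the instance of \cite[II.9.1.3]{Black:Operator} that the lemma needs, and this is consistent with the paper's own toolkit: the unlabelled lemma preceding Lemma~\ref{Lem:tensor} already constructs the state slice map $E(ab)=\varphi(a)b$ on a maximal tensor product without further comment. What the paper's route buys is brevity via a standard reference; what yours buys is a self-contained argument that stays inside the maximal tensor product, avoids any nuclearity considerations, and makes transparent, through the biorthogonal system, why linear independence of $(a_i)_{i=1}^n$ is precisely the hypothesis required.
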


\begin{proof}
Since $A$ is LM, 
the natural map from $A \otimes Z_B(A)$ to $B$ 
is injective by Lemma~\ref{Lem:LM times B}. 
It is well known that the inclusion map 
from the algebraic tensor product of $A$ and $Z_B(A)$ 
to the (maximal) tensor product $A \otimes Z_B(A)$ 
is injective (see \cite[II.9.1.3]{Black:Operator}). 
The conclusion follows from these lemmas. 
\end{proof}

\begin{definition}
We say that a pair $(v_1, v_2)$
of self-adjoint unitaries $v_1, v_2$
in a unital C*-algebra is \emph{generic} if the family
\begin{align*}
&\big((v_1v_2)^n, (v_1v_2)^nv_1\big)_{n \in \bbZ}\\
&=(1,v_1,v_2, v_1v_2, v_2v_1,
v_1v_2v_1, v_2v_1v_2, v_1v_2v_1v_2, v_2v_1v_2v_1,
v_1v_2v_1v_2v_1, \ldots )
\end{align*}
is linearly independent.
\end{definition}

In other words, $(v_1, v_2)$ is generic 
if and only if the map sending the natural generators of 
the group algebra $\bbC((\bbZ/2\bbZ) * (\bbZ/2\bbZ))$ 
to $v_1, v_2$ is injective.

\begin{lemma}\label{Lem:twopairs}
Let $v_1,v_2,w_1,w_2$ be the four self-adjoint unitaries
in the C*-algebra $C([0,1], M_2(\bbC))$ defined by
\begin{align*}
v_1(t)&=\left(\begin{matrix} 1 & 0 \\ 0 & -1 \end{matrix}\right), &
v_2(t)&=\left(\begin{matrix} \cos(\pi t) & \sin(\pi t) \\
\sin(\pi t) & -\cos(\pi t) \end{matrix}\right), \\
w_1(t)&=\left(\begin{matrix} 0 & 1 \\ 1 & 0 \end{matrix}\right), &
w_2(t)&=\left(\begin{matrix} -\sin(\pi t) & \cos(\pi t) \\
\cos(\pi t) & \sin(\pi t) \end{matrix}\right)
\end{align*}
for $t \in [0,1]$.
Then $v_1,v_2,w_1,w_2$ satisfy
$v_1 w_1 = - w_1 v_1$,
$v_2 w_2 = - w_2 v_2$
and the pair $(v_1,v_2)$ is generic.
\end{lemma}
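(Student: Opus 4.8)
The plan is to split the verification into the two (routine) anticommutation relations and the one substantive claim, genericity. Since the four matrices are given as self-adjoint unitaries, I would first note, as a sanity check, that each is Hermitian and, using $\cos^2(\pi t)+\sin^2(\pi t)=1$, squares to the identity at every $t$, so each is indeed a self-adjoint unitary in $C([0,1],M_2(\bbC))$. The relations $v_1w_1=-w_1v_1$ and $v_2w_2=-w_2v_2$ then follow from a pointwise $2\times2$ computation: one checks directly that
\[
v_1(t)w_1(t)=\begin{pmatrix}0&1\\-1&0\end{pmatrix}=-w_1(t)v_1(t),
\qquad
v_2(t)w_2(t)=\begin{pmatrix}0&1\\-1&0\end{pmatrix}=-w_2(t)v_2(t)
\]
for all $t$. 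These are the routine calculations I would not dwell on.

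The crux is genericity. The key observation is that $v_1(t)v_2(t)$ is a \emph{rotation matrix},
\[
(v_1v_2)(t)=\begin{pmatrix}\cos(\pi t)&\sin(\pi t)\\-\sin(\pi t)&\cos(\pi t)\end{pmatrix},
\]
so for every $n\in\bbZ$ its $n$-th power is again a rotation, and right-multiplication by $v_1(t)=\begin{pmatrix}1&0\\0&-1\end{pmatrix}$ is transparent:
\[
(v_1v_2)^n(t)=\begin{pmatrix}\cos(n\pi t)&\sin(n\pi t)\\-\sin(n\pi t)&\cos(n\pi t)\end{pmatrix},
\qquad
\big((v_1v_2)^nv_1\big)(t)=\begin{pmatrix}\cos(n\pi t)&-\sin(n\pi t)\\-\sin(n\pi t)&-\cos(n\pi t)\end{pmatrix}.
\]
Now suppose $\sum_n a_n(v_1v_2)^n+\sum_n b_n(v_1v_2)^nv_1=0$ is a finite vanishing combination with $a_n,b_n\in\bbC$. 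Reading off the four matrix entries at each $t\in[0,1]$ and setting $c_n:=a_n+b_n$ and $d_n:=a_n-b_n$, the $(1,1)$ and $(2,1)$ entries give $\sum_n c_n\cos(n\pi t)=0$ and $\sum_n c_n\sin(n\pi t)=0$, while the $(2,2)$ and $(1,2)$ entries give the same two identities with $d_n$ in place of $c_n$. Combining the cosine and sine identities into one complex identity yields
\[
\sum_n c_n e^{in\pi t}=0
\quad\text{and}\quad
\sum_n d_n e^{in\pi t}=0
\qquad(t\in[0,1]).
\]

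It then remains to invoke the linear independence, on the interval $[0,1]$, of the exponentials $t\mapsto e^{in\pi t}$ for distinct $n\in\bbZ$. This is classical: these are restrictions of entire functions with distinct frequencies $n\pi$, so a finite vanishing combination on an interval has vanishing coefficients (for instance via the nonvanishing Vandermonde/Wronskian determinant, or by noting that a finite exponential sum is real-analytic and so its vanishing on $[0,1]$ forces vanishing on $[0,2]$, where the $e^{in\pi t}$ are orthogonal). Hence $c_n=d_n=0$ for all $n$, and therefore $a_n=b_n=0$, which is exactly the linear independence of the family $\big((v_1v_2)^n,(v_1v_2)^nv_1\big)_{n\in\bbZ}$; that is, $(v_1,v_2)$ is generic. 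The only genuine content lies in this last independence step, together with the observation that $v_1v_2$ is exactly a rotation (and not a rotation composed with a reflection), which is what makes the powers computable in closed form and reduces genericity to the elementary independence of characters of $\bbR$. The part most prone to error is simply tracking the four matrix entries and the signs in the $c_n,d_n$ substitution.
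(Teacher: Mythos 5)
Your proof is correct and takes essentially the same approach as the paper: the paper likewise checks the two anticommutation relations by a routine pointwise computation and reduces genericity to the linear independence of $\{\cos(n\pi t)+\sqrt{-1}\sin(n\pi t)\}_{n\in\bbZ}=\{e^{in\pi t}\}_{n\in\bbZ}$ in $C([0,1])$, leaving the details to the reader. Your explicit computation of the powers of the rotation $v_1v_2$, the entrywise reduction to the two exponential identities in $c_n=a_n+b_n$ and $d_n=a_n-b_n$, and the justification of the independence of the exponentials simply supply, correctly, the details the paper omits.
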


\begin{proof}
It is routine to check the two equalities $v_1 w_1 = - w_1 v_1$ and
$v_2 w_2 = - w_2 v_2$. That the pair $(v_1,v_2)$ is generic comes
from the fact that $\{\cos(n\pi t)+\sqrt{-1}\sin(n\pi t)\}_{n \in
\bbZ}$ is linearly independent in $C([0,1])$. We leave the details to
the readers.
\end{proof}

Let $X$ be an infinite set, 
and $[X]^2$ be the set of all subsets of $X$ with cardinality $2$. 
For $\xi=\{x,y\}\in [X]^2$ let $A_\xi$ be the CAR algebra. 
We fix four self-adjoint unitaries 
$v_{x,y},v_{y,x},w_{x,y},w_{y,x}$ in $A_\xi$ 
such that $v_{x,y} w_{x,y} = - w_{x,y} v_{x,y}$, 
$v_{y,x} w_{y,x} = - w_{y,x} v_{y,x}$ 
and the pair $(v_{x,y}, v_{y,x})$ is generic. 
Such unitaries exist by Lemma~\ref{Lem:twopairs} 
because there exists a unital embedding 
from $C([0,1], M_2(\bbC))$ to the CAR algebra.

We define a UHF algebra $A_{[X]^2}$
by $A_{[X]^2}=\bigotimes_{\xi \in [X]^{2}} A_\xi \cong
\bigotimes_{[X]^{2}\times \aleph_0} M_2(\bbC)$.
For a subset $Y$ of $X$, we set
$A_{[Y]^2} = \bigotimes_{\xi \in [Y]^2} A_\xi \subseteq A_{[X]^2}$.

\begin{definition}\label{Def:group}
For a set $X$,
we denote by $G_X$ the abelian group consisting of
all finite subsets of $X$
where the operation is the symmetric difference $\Delta$.
\end{definition}

We often identify an element $x$ of $X$ with a subset $\{x\}$ of $X$.
Thus the group $G_X$ is generated by the family $\{x\}_{x\in X}$ of mutually commuting involutions.
Hence $G_X$ is isomorphic to the group $\bigoplus_{X}(\bbZ/2\bbZ)$
of the direct sum of $|X|$ copies of $\bbZ/2\bbZ$.

For $g \in G_X$ 
we define an automorphism $\alpha_g$ on $A_{[X]^2}$ by
\[
\alpha_g = \bigotimes_{\text{$x \in g$ and $y \notin g$}} \Ad v_{x,y}.
\]
If $x\notin g$ define unitaries $V_{g;x}$ and  $V_{x;g}$ in $A_{[X]^2}$ via
\[
V_{g;x}=\prod_{y\in g} v_{y,x} v_{x,y}\qquad\text{and} 
\qquad V_{x;g}=\prod_{y\in g} v_{x,y} v_{y,x}.
\]

\begin{lemma}\label{L.Vgx}
If $x\notin g$ then 
$\alpha_g \circ \alpha_x = \Ad (V_{g;x}) \circ \alpha_{g\cup \{x\}}$ and 
$\alpha_{g\cup \{x\}} \circ \alpha_x = \Ad (V_{x;g}) \circ \alpha_g$.
\end{lemma}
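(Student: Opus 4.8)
The plan is to verify both identities factor by factor on the tensor product $A_{[X]^2}=\bigotimes_{\xi\in[X]^2}A_\xi$. Each of $\alpha_g$, $\alpha_x$ and $\alpha_{g\cup\{x\}}$ is by definition a tensor product of automorphisms of the individual factors $A_\xi$, so its restriction to $A_{\{a,b\}}$ is governed solely by which of $a,b$ lie in the relevant set: for a finite $h\subseteq X$, the automorphism $\alpha_h$ acts on $A_{\{a,b\}}$ as the identity when $\{a,b\}\subseteq h$ or $\{a,b\}\cap h=\emptyset$, as $\Ad v_{a,b}$ when $a\in h$ and $b\notin h$, and as $\Ad v_{b,a}$ when $b\in h$ and $a\notin h$. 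Since a tensor product of factorwise automorphisms is determined by its restrictions to the factors (two such automorphisms agreeing on every $A_\xi$ agree on the dense subalgebra they generate, hence everywhere), it suffices to check each identity on every $A_{\{a,b\}}$.

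First I would record that $\Ad V_{g;x}$ is factorwise as well. Indeed $V_{g;x}=\prod_{y\in g}v_{y,x}v_{x,y}$ is a product of unitaries $v_{y,x}v_{x,y}\in A_{\{x,y\}}$ lying in pairwise distinct, hence commuting, tensor factors; therefore $\Ad V_{g;x}=\prod_{y\in g}\Ad(v_{y,x}v_{x,y})$ acts on $A_{\{x,y\}}$ as $\Ad(v_{y,x}v_{x,y})$ for $y\in g$ and trivially on all other factors.

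With these descriptions in hand, the first identity reduces to a finite case analysis on the position of $\{a,b\}$ relative to $g$ and $x$ (recall $x\notin g$). When $x\notin\{a,b\}$, the factor $A_{\{a,b\}}$ is untouched by $\alpha_x$ and by $\Ad V_{g;x}$, and both sides collapse to the common action of $\alpha_g=\alpha_{g\cup\{x\}}$ on $A_{\{a,b\}}$. The only interesting case is $x\in\{a,b\}$, say $b=x$: if $a\in g$ then the left side gives $\Ad v_{a,x}\circ\Ad v_{x,a}=\Ad(v_{a,x}v_{x,a})$, while on the right $\alpha_{g\cup\{x\}}$ is trivial on $A_{\{a,x\}}$ and $\Ad V_{g;x}$ contributes exactly the $y=a$ term $\Ad(v_{a,x}v_{x,a})$; if $a\notin g$ then both sides give $\Ad v_{x,a}$. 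This verifies $\alpha_g\circ\alpha_x=\Ad(V_{g;x})\circ\alpha_{g\cup\{x\}}$.

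Finally, the second identity follows formally from the first. Since each $v_{a,b}$ is a self-adjoint unitary, $\alpha_x$ acts factorwise as the identity or as $\Ad$ of such a unitary, whence $\alpha_x\circ\alpha_x=\id$. Composing the first identity on the right with $\alpha_x$ gives $\alpha_g=\Ad(V_{g;x})\circ\alpha_{g\cup\{x\}}\circ\alpha_x$, and composing on the left with $\Ad(V_{g;x})^{-1}=\Ad(V_{g;x}^{*})$ yields $\alpha_{g\cup\{x\}}\circ\alpha_x=\Ad(V_{g;x}^{*})\circ\alpha_g$. Since $V_{g;x}^{*}=\prod_{y\in g}(v_{y,x}v_{x,y})^{*}=\prod_{y\in g}v_{x,y}v_{y,x}=V_{x;g}$, this is exactly the desired $\alpha_{g\cup\{x\}}\circ\alpha_x=\Ad(V_{x;g})\circ\alpha_g$. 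The only genuine labor is the bookkeeping of the case analysis in the third paragraph; the reduction to single factors and the derivation of the second identity from the first are the conceptual shortcuts that keep the argument short, and I expect no real obstacle beyond keeping the index conventions straight.
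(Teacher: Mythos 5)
Your proof is correct and follows essentially the same route as the paper: both arguments rest on the fact that $\alpha_g$, $\alpha_x$, $\alpha_{g\cup\{x\}}$ and $\Ad V_{g;x}$ act factorwise and that unitaries in distinct tensor factors commute (the paper regroups the tensor-product expressions globally, while you check the same commutation bookkeeping factor by factor, which is just a different presentation of the identical computation). Your derivation of the second identity from the first, using that $\alpha_x$ is an involution and $V_{x;g}=V_{g;x}^{*}$, is exactly the paper's.
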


\begin{proof} 
Note that $v_{x,y}$ and $v_{z,t}$ commute unless $z=y$ and $x=t$. 
Using $x\notin g$ we have 
\begin{align*}
\alpha_g \circ \alpha_x & =
\Big(\bigotimes_{y\in g\text{ and } z\notin g} \Ad v_{y,z}\Big)
\circ 
\Big(\bigotimes_{z\neq x} \Ad v_{x,z}\Big)\\
&=\Big(\bigotimes_{y\in g} \Ad v_{y,x} 
\circ \!\!\!\! \bigotimes_{y\in g\text{ and } z\notin g\cup \{x\}} 
\!\!\!\!\!\!\!
\Ad v_{y,z}\Big)
\circ 
\Big(\bigotimes_{z\in g} \Ad v_{x,z}
\circ 
\!\bigotimes_{z\notin g\cup \{x\}}\!\!\Ad  v_{x,z}\Big)\\
&=\Ad (V_{g;x})\circ\alpha_{g\cup \{x\}}
\end{align*}
This proves the first equality. 
Since $\alpha_x$ is an involution and $V_{x;g}=V_{g;x}^*$, 
the first equality implies the second equality. 
\end{proof} 

Let us choose a faithful representation 
$A_{[X]^2} \subseteq B(H)$ on some Hilbert space $H$ 
(see Section~\ref{S.rd} for one construction of such a representation).
Let $\ell^2(G_X,H)$ be the Hilbert space consisting 
of functions $\xi: G_X \to H$ with 
$\sum_{g \in G_X}\|\xi(g)\|^2 < \infty$. 
We embed $A_{[X]^2}$ into $B(\ell^2(G_X,H))$ by 
\[
(a \xi)(g)=\alpha_g(a)\xi(g) \in H
\]
for $a \in A_{[X]^2}$, $\xi \in \ell^2(G_X,H)$ and $g\in G_X$. 
For each $x \in X$, 
we define $u_x \in B(\ell^2(G_X,H))$ by 
\begin{align*}
(u_x \xi)(g) 
&= V_{g;x}\,\xi(g\cup\{x\}) \in H \\
(u_x \xi)(g\cup\{x\}) 
&= V_{x;g} \,\xi(g) \in H 
\end{align*}
for $\xi \in \ell^2(G_X,H)$ and $g\in G_X$ with $x\notin g$. 

\begin{lemma} \label{L.u_xsau}
For each $x \in X$, 
$u_x$ is a self-adjoint unitary 
such that $\Ad u_x$ and $\alpha_x$ 
agree on $A_{[X]^2} \subseteq B(\ell^2(G_X,H))$. 
\end{lemma}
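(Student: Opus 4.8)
The plan is to exploit the fact that $u_x$ acts fibrewise within each coset of the order-two subgroup $\{0,\{x\}\}$ of $G_X$; that is, it mixes only the pair of fibres indexed by $g$ and $g\cup\{x\}$ for $x\notin g$. On such a pair, $u_x$ is literally the block operator $\left(\begin{matrix} 0 & V_{g;x}\\ V_{x;g} & 0\end{matrix}\right)$ on $H\oplus H$, so all three assertions reduce to elementary $2\times 2$ manipulations together with the two identities supplied by Lemma~\ref{L.Vgx}.

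First I would record the formal properties of the twisting unitaries: $V_{g;x}=\prod_{y\in g}v_{y,x}v_{x,y}$ is a product of self-adjoint unitaries and hence a unitary in $A_{[X]^2}\subseteq B(H)$, while by definition $V_{x;g}=V_{g;x}^*$. Self-adjointness of $u_x$ then follows by computing $\langle u_x\xi,\eta\rangle$: grouping the sum over $G_X$ into the cosets $\{g,g\cup\{x\}\}$ and moving each $V_{g;x}$ and $V_{x;g}$ across the inner product (using $V_{g;x}^*=V_{x;g}$ and $V_{x;g}^*=V_{g;x}$) reproduces $\langle\xi,u_x\eta\rangle$ term by term. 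For unitarity, since $u_x$ is self-adjoint it suffices to check $u_x^2=1$; indeed $(u_x^2\xi)(g)=V_{g;x}V_{x;g}\,\xi(g)=V_{g;x}V_{g;x}^*\,\xi(g)=\xi(g)$, with the companion identity on the fibre $g\cup\{x\}$.

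For the conjugation identity $\Ad u_x=\alpha_x$ on $A_{[X]^2}$, I would compute $(u_x a u_x\xi)(g)$ directly, using $u_x=u_x^*$, $u_x^2=1$, and the twisted representation $(a\xi)(g)=\alpha_g(a)\xi(g)$. On the fibre $g$ (with $x\notin g$) this yields $V_{g;x}\,\alpha_{g\cup\{x\}}(a)\,V_{x;g}=\big(\Ad V_{g;x}\circ\alpha_{g\cup\{x\}}\big)(a)$, which equals $(\alpha_g\circ\alpha_x)(a)=\alpha_g(\alpha_x(a))$ by the first identity of Lemma~\ref{L.Vgx}; this is exactly $(\alpha_x(a)\xi)(g)$. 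Symmetrically, on the fibre $g\cup\{x\}$ one obtains $V_{x;g}\,\alpha_g(a)\,V_{g;x}=\big(\Ad V_{x;g}\circ\alpha_g\big)(a)=(\alpha_{g\cup\{x\}}\circ\alpha_x)(a)$ by the second identity of Lemma~\ref{L.Vgx}, matching $(\alpha_x(a)\xi)(g\cup\{x\})$.

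The only real care needed---what I would classify as a bookkeeping obstacle rather than a genuine difficulty---is keeping the two fibre formulas straight and invoking the correct half of Lemma~\ref{L.Vgx} for each: the fibre $g$ calls on $\alpha_g\circ\alpha_x=\Ad(V_{g;x})\circ\alpha_{g\cup\{x\}}$, whereas the fibre $g\cup\{x\}$ calls on $\alpha_{g\cup\{x\}}\circ\alpha_x=\Ad(V_{x;g})\circ\alpha_g$. Once these are matched to the correct cosets, every claim is immediate, and no approximation or density argument enters.
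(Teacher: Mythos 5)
Your proof is correct and takes essentially the same route as the paper: both decompose $\ell^2(G_X,H)$ into the $u_x$-invariant two-dimensional pieces $\ell^2(\{g,g\cup\{x\}\},H)$, obtain self-adjointness and unitarity from the off-diagonal block form together with $V_{x;g}=V_{g;x}^*$, and reduce $\Ad u_x=\alpha_x$ fibrewise to exactly the two identities of Lemma~\ref{L.Vgx}, matched to the same cosets. The only cosmetic difference is that you verify self-adjointness by an inner-product computation rather than by inspecting the $2\times 2$ block matrix, which is the same content.
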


\begin{proof} 
For $g\in G_X$ such that $x\notin g$ 
the subspace $\ell^2(\{g, g\cup\{x\}\},H) \subseteq \ell^2(G_X,H)$ 
is invariant for $u_x$, and $u_x$ is represented on it as 
\[
u_x = 
\left(\begin{matrix} 0 & V_{g;x} \\ 
V_{x;g} & 0 \end{matrix}\right).
\]
This shows that $u_x$ is a self-adjoint unitary. 
To show that $\Ad u_x$ and $\alpha_x$ 
agree on $A_{[X]^2} \subseteq B(\ell^2(G_X,H))$, 
it suffices to see 
\begin{align*}
\Ad (V_{x;g}) \circ \alpha_g &=\alpha_{g\cup\{x\}}\circ\alpha_x \\ 
\Ad (V_{g;x}) \circ\alpha_{g\cup \{x\}} &= \alpha_g\circ \alpha_x
\end{align*}
which is Lemma~\ref{L.Vgx}. 
\end{proof}

By Lemma~\ref{L.u_xsau} we see that 
for $\{x,y\} \in [X]^2$ and $z\in X$
we have $\Ad u_x \rs_{A_{\{x,y\}}} = \Ad v_{x,y}$,
and $\Ad u_z\rs_{A_{\{x,y\}}}=\id$ if $z\notin \{x,y\}$.
In particular, 
$u_z$ commutes with $v_{x,y}$ unless $y=z$. 

\begin{lemma}\label{L.u_xv_xy}
For $\{x,y\} \in [X]^2$ 
the two self-adjoint unitaries $u_xv_{x,y}$ and $u_yv_{y,x}$ 
commute. 
\end{lemma}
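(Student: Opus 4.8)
The plan is to show that $p:=u_xv_{x,y}$ and $q:=u_yv_{y,x}$ are commuting self-adjoint unitaries. First I would record that each is a self-adjoint unitary. By the remark following Lemma~\ref{L.u_xsau}, $u_z$ commutes with $v_{x,y}$ unless $z=y$; applying this (and its analogue with the roles of $x$ and $y$ interchanged) shows that $u_x$ commutes with $v_{x,y}$ and $u_y$ commutes with $v_{y,x}$. Hence $(u_xv_{x,y})^*=v_{x,y}u_x=u_xv_{x,y}$ and $(u_xv_{x,y})^2=u_x^2v_{x,y}^2=1$, and similarly for $u_yv_{y,x}$. Since two self-adjoint unitaries commute if and only if their product is self-adjoint, it suffices to prove $pq=qp$.

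To compute the products I would work in the representation on $\ell^2(G_X,H)$ used to define the $u_x$. From $\Ad u_x\rs_{A_{\{x,y\}}}=\Ad v_{x,y}$ and $\Ad u_y\rs_{A_{\{x,y\}}}=\Ad v_{y,x}$ one obtains $v_{x,y}u_y=u_yv_{y,x}v_{x,y}v_{y,x}$ and $v_{y,x}u_x=u_xv_{x,y}v_{y,x}v_{x,y}$, which simplify the two products to $pq=u_xu_y\,v_{y,x}v_{x,y}$ and $qp=u_yu_x\,v_{x,y}v_{y,x}$. Each side is a weighted shift $g\mapsto g\,\Delta\,\{x,y\}$ on $\ell^2(G_X,H)$, so writing the coefficient of $u_x$ as $W_x(g)$ (equal to $V_{g;x}$ if $x\notin g$ and to $V_{x;g\setminus\{x\}}$ if $x\in g$), the equality $pq=qp$ reduces to the pointwise identity
\[
W_x(g)\,W_y(g\Delta\{x\})\,\alpha_{g\Delta\{x,y\}}(v_{y,x}v_{x,y})= W_y(g)\,W_x(g\Delta\{y\})\,\alpha_{g\Delta\{x,y\}}(v_{x,y}v_{y,x})
\]
for every $g\in G_X$.

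The next step is a bookkeeping reduction. Each $W$ is a product, over $z$ in a subset of $g$, of factors lying in the mutually commuting subalgebras $A_{\{x,z\}}$ or $A_{\{y,z\}}$, and the only subalgebra common to a $W_x$-factor and a $W_y$-factor is $A_{\{x,y\}}$ (forced by $\{x,z\}=\{y,z'\}$). I would check that for each $z\notin\{x,y\}$ the contributions to $A_{\{x,z\}}$ and to $A_{\{y,z\}}$ agree on the two sides, since they depend only on whether $x\in g$ and $y\in g$, conditions unchanged by replacing $g$ with $g\Delta\{y\}$ or $g\Delta\{x\}$. These factors commute with everything in $A_{\{x,y\}}$ and cancel, collapsing the identity to an equality of the $A_{\{x,y\}}$-components. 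Since $(v_{x,y},v_{y,x})$ is generic, $C^*(v_{x,y},v_{y,x})\subseteq A_{\{x,y\}}$ is a copy of the group algebra of $(\bbZ/2\bbZ)*(\bbZ/2\bbZ)$, so this residual equality is a word identity in the two involutions $a=v_{x,y}$, $b=v_{y,x}$. The main obstacle, and really the only substantive computation, is verifying this word identity: I would split into the four cases determined by whether $x\in g$ and $y\in g$, evaluate the $A_{\{x,y\}}$-parts of $V_{g;x},V_{x;g}$ together with the twists $\alpha_h(v_{x,y})$ and $\alpha_h(v_{y,x})$ (which equal $a$ and $b$ except that $\alpha_h(v_{x,y})=bab$ when $y\in h$ and $x\notin h$, and $\alpha_h(v_{y,x})=aba$ when $x\in h$ and $y\notin h$), and confirm that both sides reduce to the same reduced word in $a,b$ in each case.
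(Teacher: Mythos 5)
Your proposal is correct and is essentially the paper's own proof: both verify the commutation by direct computation in the representation on $\ell^2(G_X,H)$, split according to the four cases of membership of $x$ and $y$ in $g$ (the paper packages exactly these four cases as a $4\times 4$ operator-matrix computation on the invariant subspace $H_g$ for $g$ with $x,y\notin g$), and both rest on the same inputs, namely the twist rule for $\alpha_h(v_{x,y})$ and $\alpha_h(v_{y,x})$, relations such as $V_{x;g\cup\{y\}}=V_{x;g}v_{x,y}v_{y,x}$, and the fact that all the unitaries involved commute with one another except $v_{x,y}$ with $v_{y,x}$ (I checked your residual word identity in all four cases, e.g.\ for $x\in g$, $y\notin g$ both sides collapse to $V_{x;g'}V_{g';y}\,v_{x,y}v_{y,x}$, so the plan goes through). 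The only cosmetic differences are your preliminary algebraic reduction of the two products to $u_xu_y\,v_{y,x}v_{x,y}$ and $u_yu_x\,v_{x,y}v_{y,x}$ and your framing of the residue as a word identity in two involutions, for which the appeal to genericity is superfluous: to prove equality you only need that the reduced words coincide, which holds for an arbitrary pair of self-adjoint unitaries.
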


\begin{proof}
Take $\{x,y\} \in [X]^2$. 
First note that for $h\in G_X$ 
we have $\alpha_h (v_{x,y})=v_{y,x} v_{x,y} v_{y,x}$ 
if $y\in h$ and $x\notin h$, and 
$\alpha_h (v_{x,y})=v_{x,y}$ otherwise.

Fix $g\in G_X$ such that $x\notin g$ and $y\notin g$. 
The subspace 
 \[
 H_g=\ell^2\big(\{g, g\cup\{x\}, g\cup\{y\},g\cup\{x,y\}\},H\big)
 \subseteq \ell^2(G_X,H)
 \]
is invariant for each of  $u_x$, $u_y$, $v_{x,y}$ and $v_{y,x}$. 
Using the observation in the beginning of this proof, 
we see that $u_x$, $u_y$, $v_{x,y}$ and $v_{y,x}$ 
are represented on $H_g$ by 
\begin{align*}
u_x&= 
\left (\begin{matrix}
0 & \! V_{g;x} \! \! & 0 & 0 \\
V_{x;g} \! & 0 & 0 & 0 \\
0 & 0 & 0 & \! \! V_{g\cup\{y\};x} \\
0 & 0 & \! \! V_{x;g\cup\{y\}} \! & 0 
\end{matrix}
\right),&
v_{x,y}&=\left(
\begin{matrix}
v_{x,y} \! & 0 & 0 & 0 \\
0 & \! v_{x,y} \! & 0 & 0 \\
0 & 0 & \!\! v_{y,x} v_{x,y} v_{y,x} \!\! & 0 \\
0 & 0 & 0 & \! v_{x,y}
\end{matrix}
\right),\\
u_y &= 
\left(
\begin{matrix}
0 & 0 & \! \! V_{g;y} \! & 0 \\
0 & 0 & 0 & \! V_{g\cup\{x\}; y}\\
V_{y;g} \! & 0 & 0 & 0 \\
0 & \! V_{y;g\cup\{x\}} \! \! & 0 & 0 
\end{matrix}
\right),&
v_{y,x}&= 
\left(
\begin{matrix}
v_{y,x} \! & 0 & 0 & 0 \\
0 & \!\! v_{x,y} v_{y,x} v_{x,y} \!\! & 0 & 0 \\
0 & 0 & \! v_{y,x} \! & 0 \\
0 & 0 & 0 & \! v_{y,x}
\end{matrix}
\right).
\end{align*}
Using the computations such as $V_{x;g\cup\{y\}}=V_{x;g}v_{x,y}v_{y,x}$, 
we see that $u_x v_{x,y}$ and $u_y v_{y,x}$ 
are represented on $H_g$ by 
\begin{align*}
u_x v_{x,y} &=\left (\begin{matrix}
0 & \!\!\! V_{g;x}v_{x,y} \!\!\! & 0 & 0 \\
V_{x;g}v_{x,y} \!\!\! & 0 & 0 & 0 \\
0 & 0 & 0 & \!\!\! V_{g;x}v_{y,x} \\
0 & 0 & \!\!\! V_{x;g}v_{y,x} \!\!\! & 0 
\end{matrix}
\right), \\
u_yv_{y,x} &= \left(
\begin{matrix}
0 & 0 & \!\!\! V_{g;y}v_{y,x} \!\!\! & 0 \\
0 & 0 & 0 & \!\!\! V_{g; y}v_{x,y}\\
V_{y;g}v_{y,x} \!\!\! & 0 & 0 & 0 \\
0 & \!\!\! V_{y;g}v_{x,y} \!\!\! & 0 & 0 
\end{matrix}
\right).
\end{align*}
The unitaries $V_{g;x},V_{x;g},V_{g;y},V_{y;g},v_{x,y}$ and $v_{y,x}$
occurring in entries of these two matrices 
commute with each others except that 
$v_{x,y}$ does not commute with $v_{y,x}$. 
Using this fact, 
one can show that both $(u_x v_{x,y})(u_y v_{y,x})$ 
and $(u_y v_{y,x})(u_x v_{x,y})$ are equal to 
\[
\left(
\begin{matrix}
0 & 0 & 0 & \!\!\! V_{g;x}V_{g;y} \\
0 & 0 & \!\!\! V_{x;g}V_{g;y}v_{x,y} v_{y,x} \!\!\! & 0 \\
0 & \!\!\! V_{g;x}V_{y;g}v_{y,x} v_{x,y} \!\!\! & 0 & 0 \\
V_{x;g}V_{y;g} \!\!\! & 0 & 0 & 0 
\end{matrix}
\right).
\]
Therefore $u_x v_{x,y}$ and $u_y v_{y,x}$ commute. 
\end{proof}

Let 
\[
B_{[X]^2} := C^*(A_{[X]^2}\cup\{u_x\}_{x \in X}) 
\subseteq B \big( \ell^2(G_X,H) \big).
\]
For a subset $Y \subseteq X$, we define
\[
B_{[Y]^2} := C^*(A_{[Y]^{2}}\cup\{u_x\}_{x \in Y}) 
\subseteq B_{[X]^2}.
\]

\begin{remark}\label{rem:cocycle_cross_pro}
The C*-algebra $B_{[X]^2}$ does not depend on 
the choices of embeddings $A_{[X]^2} \subseteq B(H)$, 
and is isomorphic to a cocycle crossed product 
$A_{[X]^2} \rtimes_{(\alpha,c)} G_X$ 
for an appropriate cocycle action $(\alpha,c)$ 
(see \cite{PaRae:Twisted} for definitions of cocycle actions 
and cocycle crossed products). 
In fact, the proof of Proposition~\ref{Prop:B=limCAR} 
shows that any C*-algebra generated by 
$A_{[X]^2}\cup\{u_x\}_{x \in X}$ with 
the relations in Lemma~\ref{L.u_xsau} and Lemma~\ref{L.u_xv_xy} 
is isomorphic to $B_{[X]^2}$. 
\end{remark}

\begin{prop}\label{Prop:B=limCAR}
The C*-algebra $B_{[X]^2}$ is 
a unital LM algebra with $\chi(B_{[X]^2}) = |X|$. 
\end{prop}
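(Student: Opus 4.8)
The plan is to verify the three assertions separately, disposing of unitality and the character density quickly and concentrating on the LM property, which I will establish directly from the local criterion in the definition of LM. Unitality is immediate, since $1 \in A_{[X]^2} \subseteq B_{[X]^2}$. For the character density, the upper bound $\chi(B_{[X]^2}) \le |X|$ comes from the remark after Definition~\ref{D.cd} that $\chi$ equals the least cardinality of an infinite generating set: a dense subset of $A_{[X]^2}$ of size $|X| = |[X]^2|$ (which exists by Lemma~\ref{L.tensor.density}, as $|[X]^2| = |X|$ for infinite $X$), together with $\{u_x : x \in X\}$, generates $B_{[X]^2}$ and has cardinality $|X|$. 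For the lower bound I would invoke the standard fact that the density character of a metric space is monotone under passage to subspaces, giving $\chi(B_{[X]^2}) \ge \chi(A_{[X]^2}) = |X|$ by Lemma~\ref{L.tensor.density}.

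For LM the first move is to localize. Since $B_{[X]^2} = C^*(A_{[X]^2}\cup\{u_x\}_{x\in X})$ and $\bigcup_{Y} A_{[Y]^2}$ (over finite $Y\subseteq X$) is dense in $A_{[X]^2}$, the union $\bigcup_{Y} B_{[Y]^2}$ over finite $Y$ is dense in $B_{[X]^2}$. Hence, given a finite set $\mathcal F$ and $\e>0$, I may fix a finite $Y\subseteq X$ with $\mathcal F\subseteq_{\e/2} B_{[Y]^2}$, and then choose a full matrix subalgebra $M_0\subseteq A_{[Y]^2}$ (possible because $A_{[Y]^2}$ is a CAR algebra) that contains every $v_{x,y}$ with $x,y\in Y$ and approximates, to within $\e/2$, the finitely many elements of $A_{[Y]^2}$ occurring in a representation of the $\e/2$-approximants of $\mathcal F$. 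It then suffices to exhibit a full matrix subalgebra of $B_{[X]^2}$ containing $C^*(M_0\cup\{u_x : x\in Y\})$.

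The key device is to push the $u_x$ into the relative commutant of $M_0$ and then pair them with fresh coordinates. For $x\in Y$ set $V_x := \prod_{y\in Y\setminus\{x\}} v_{x,y}\in M_0$; since $\Ad u_x$ restricts on $A_{[Y]^2}$ to $\alpha_x = \Ad V_x$ (Lemma~\ref{L.u_xsau}) and $u_x$ commutes with $V_x$, the element $P_x := u_x V_x$ is a self-adjoint unitary commuting with $A_{[Y]^2}\supseteq M_0$, and $u_x = P_x V_x \in C^*(M_0\cup\{P_x\})$. Using that $X$ is infinite, choose pairwise distinct fresh points $z_x \in X\setminus Y$ and put $w_x := w_{x,z_x}$. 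Because $\alpha_x$ acts on $A_{\{x,z_x\}}$ by $\Ad v_{x,z_x}$ and $v_{x,z_x} w_x = -w_x v_{x,z_x}$, the unitary $u_x$, hence $P_x$, anticommutes with $w_x$, so $N_x := C^*(P_x,w_x)\cong M_2(\bbC)$ by Lemma~\ref{Lem:M2}; moreover $N_x$ commutes with $M_0$, as $w_x$ lies in a tensor factor disjoint from $[Y]^2$ and $P_x$ commutes with $M_0$. If the $N_x$ mutually commute, then $N := C^*\big(\bigcup_{x\in Y} N_x\big)\cong M_{2^{|Y|}}(\bbC)$ is a full matrix algebra (a quotient of the simple algebra $\bigotimes_{x} N_x$) commuting with $M_0$, whence $C^*(M_0\cup N)\cong M_0\otimes N$ is again a full matrix algebra containing $M_0$ and every $P_x$, hence every $u_x$, as required.

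The hard part is checking that $N_x$ and $N_{x'}$ commute for $x\neq x'$, which amounts to the three commutations $[P_x,P_{x'}]=0$, $[P_x,w_{x'}]=0$ and $[w_x,w_{x'}]=0$. Everything except $[P_x,P_{x'}]$ is forced by the freshness and distinctness of the $z_x$ (disjoint tensor factors, together with the observation recorded after Lemma~\ref{L.u_xsau} that $u_z$ commutes with $v_{x,y}$ unless $z=y$). The genuine point is $[P_x,P_{x'}]=0$: here I would split $P_x=(u_x v_{x,x'})R_x$ with $R_x:=\prod_{y\in Y\setminus\{x,x'\}} v_{x,y}$, and likewise $P_{x'}=(u_{x'} v_{x',x})R_{x'}$. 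Then $u_x v_{x,x'}$ and $u_{x'} v_{x',x}$ commute by Lemma~\ref{L.u_xv_xy}, while $R_x$ and $R_{x'}$ lie in disjoint tensor factors of $A_{[Y]^2}$ and are fixed by $\alpha_{x'}$ and $\alpha_x$ respectively, so all remaining cross terms commute; multiplying out gives $P_x P_{x'}=P_{x'} P_x$. This computation is the technical heart, and it is exactly where the cocycle relations of Lemma~\ref{L.Vgx} and Lemma~\ref{L.u_xv_xy} enter; once it is in place, the full matrix subalgebra $M_0\otimes N$ witnesses $\mathcal F\subseteq_\e M_0\otimes N$ and the LM property follows.
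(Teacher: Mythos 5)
Your verification of unitality and of $\chi(B_{[X]^2})=|X|$ is fine, and most of the LM machinery you set up is sound; but the argument has one genuine gap, and it sits at the load-bearing step. You choose a \emph{full matrix} subalgebra $M_0\subseteq A_{[Y]^2}$ ``that contains every $v_{x,y}$ with $x,y\in Y$.'' No such $M_0$ exists once $|Y|\geq 2$: for $x\neq y$ in $Y$ the pair $(v_{x,y},v_{y,x})$ was deliberately chosen \emph{generic}, i.e.\ the family $\big((v_{x,y}v_{y,x})^n,(v_{x,y}v_{y,x})^nv_{x,y}\big)_{n\in\bbZ}$ is linearly independent, so $C^*(v_{x,y},v_{y,x})$ is infinite-dimensional and cannot be contained in any finite-dimensional subalgebra, let alone a full matrix algebra. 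This is not a slip you can fix by choosing $M_0$ larger: genericity is exactly the mechanism driving Proposition~\ref{P:notAF}, so any proof that packs all the $v_{x,y}$, $x,y\in Y$, exactly into one finite-dimensional algebra would prove too much. Nor does it help to ask only that the products $V_x=\prod_{y\in Y\setminus\{x\}}v_{x,y}$ lie in $M_0$: since the $R$-factors are commuting involutions, $(V_xV_{x'})^{2n}=(v_{x,x'}v_{x',x})^{2n}$, so $C^*(V_x,V_{x'})$ is again infinite-dimensional. Because your conclusion $u_x=P_xV_x\in C^*(M_0\cup N)$ requires $V_x\in M_0$ exactly, the gap is essential to the argument as written.

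The paper sidesteps this by not aiming for a full matrix algebra in one step: it shows that $D_{(F,\iota)}=C^*\big(B_{[F]^2}\cup\{w_{x,\iota(x)}\}_{x\in F}\big)$ equals $A_{[F]^2}\otimes\bigotimes_{x\in F}C^*(u'_x,w_{x,\iota(x)})$, a copy of the (infinite-dimensional) CAR algebra, where $u'_x=u_x\prod_{y\in F\setminus\{x\}}v_{x,y}$ is literally your $P_x$; these CAR subalgebras form a directed family with dense union, and LM follows because the CAR algebra is itself AM. In other words, the obstruction you ran into is absorbed into the tensor factor $A_{[F]^2}$ rather than forced into a finite-dimensional corner. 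Your argument can be repaired in the same spirit without restructuring: drop exact containment and require only $V_x\in_\delta M_0$ together with $a_i\in_\delta M_0$ for the finitely many $A_{[Y]^2}$-coefficients of polynomial approximants of $\F$; since $P_x\in N$ exactly, this gives $u_x\in_\delta M_0\otimes N$, and a routine estimate on a fixed polynomial in unitaries yields $\F\subseteq_\e M_0\otimes N$ for $\delta$ small enough. Everything else in your write-up --- self-adjointness of $P_x$, its commutation with $A_{[Y]^2}$ via Lemma~\ref{L.u_xsau}, the anticommutation with $w_{x,z_x}$ giving $N_x\cong M_2(\bbC)$ by Lemma~\ref{Lem:M2}, and the mutual commutation of the $P_x$ via Lemma~\ref{L.u_xv_xy} --- coincides with the paper's computation for $u'_x$ and is correct.
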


\begin{proof}
By Lemma~\ref{L.tensor.density}, 
we have $\chi(A_{[X]^2}) = |X|$. 
This implies $\chi(B_{[X]^2}) = |X|$.

We are going to show that 
$B_{[X]^2}$ is a direct limit of CAR algebras.
This implies that $B_{[X]^2}$ is LM.
For a finite subset $F \subseteq X$
and an injective map $\iota \colon F \to X\setminus F$,
define a subalgebra
$D_{(F,\iota)} \subseteq B_{[X]^2}$ by
\[
D_{(F,\iota)} :=
C^*\big(B_{[F]^2} \cup \{w_{x,\iota(x)}\}_{x \in F}\big) \subseteq B_{[X]^2}.
\]
The family $\{D_{(F,\iota)}\}_{(F,\iota)}$
of subalgebras is directed because $X$ is infinite, 
and its union is dense in $B_{[X]^2}$.
Thus it suffices to show that $D_{(F,\iota)}$ is the CAR algebra
for every finite subset $F \subseteq X$ and
every injective map $\iota \colon F \to X\setminus F$.

Take a finite subset $F \subseteq X$ and
an injective map $\iota \colon F \to X\setminus F$.
For $x \in F$,
we define
\[
u'_{x} := u_{x} \prod_{y \in F\setminus \{x\}} v_{x,y} \in D_{(F,\iota)}.
\]
which is a self-adjoint unitary. 
Since Lemma~\ref{L.u_xsau} shows 
\[
\Ad u_x \rs_{A_{[F]^2}} = \alpha_x \rs_{A_{[F]^2}} 
= \Ad \Big( \prod_{y \in F\setminus \{x\}} v_{x,y} \Big) \rs_{A_{[F]^2}}, 
\]
$u'_{x}$ commutes with the subalgebra $A_{[F]^2}$.
The family $\{u'_{x}\}_{x\in F}$ mutually commutes 
by Lemma~\ref{L.u_xv_xy}. 
For each $x \in F$,
the self-adjoint unitary $w_{x,\iota(x)} \in D_{(F,\iota)}$
commutes with $A_{[F]^2}$ 
and $\{w_{y,\iota(y)},u'_{y}\}_{y \in F\setminus \{x\}}$,
and satisfies $u'_{x}w_{x,\iota(x)} = - w_{x,\iota(x)}u'_{x}$.
Therefore $C^*(u'_{x},w_{x,\iota(x)})$ is isomorphic to $M_2(\bbC)$
for $x \in F$ by Lemma~\ref{Lem:M2},
and the family
\[
\{C^*(u'_{x},w_{x,\iota(x)})\}_{x \in F} \cup \{A_{[F]^2}\}
\]
mutually commutes.
Since $D_{(F,\iota)}$ is generated by
these mutually commuting subalgebras,
we get
\[
D_{(F,\iota)} =
A_{[F]^2} \otimes \bigotimes_{x \in F}C^*(u'_{x},w_{x,\iota(x)})
\cong \bigotimes_{|[F]^2|\times \aleph_0 +|F|}M_2(\bbC).
\]
We are done.
\end{proof}

\begin{lemma}\label{Lem:a+bv}
Let $Y$ be a nonempty proper subset of $X$.
Take $x \in Y$ and $y \in X \setminus Y$.
Then every element in $B_{[Y]^2} \subseteq B_{[X]^2}$
can be written as $a v_{x,y} + a'$
for $a, a' \in Z_{B_{[X]^2}}(A_{\{x,y\}})$.
\end{lemma}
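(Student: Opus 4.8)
The plan is to set $Z := Z_{B_{[X]^2}}(A_{\{x,y\}})$ and to prove the assertion in the form $B_{[Y]^2} \subseteq S$, where $S := \{a v_{x,y} + a' : a, a' \in Z\}$. First I would record two facts about $S$. Since $v_{x,y} \in A_{\{x,y\}}$, every element of $Z$ commutes with $v_{x,y}$; and $v_{x,y}$ is a self-adjoint unitary with $v_{x,y}^2 = 1$. A direct check then shows that $S$ is closed under products, under the adjoint, and under linear combinations, so $S = C^*(Z \cup \{v_{x,y}\})$ is a $*$-subalgebra. To see it is norm-closed, introduce the central projections $p_\pm = (1 \pm v_{x,y})/2$; then $a v_{x,y} + a' = (a'+a)p_+ + (a'-a)p_-$, so $S = Z p_+ \oplus Z p_-$ is an orthogonal sum of two quotient C*-algebras and hence complete. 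Because $S$ is a C*-algebra, it now suffices to check that each generator of $B_{[Y]^2} = C^*(A_{[Y]^2} \cup \{u_z\}_{z \in Y})$ lies in $S$.

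Next I would dispose of the generators that already commute with $A_{\{x,y\}}$. Because $y \notin Y$, the pair $\{x,y\}$ is not in $[Y]^2$, so every tensor factor $A_\eta$ with $\eta \in [Y]^2$ is a factor distinct from $A_{\{x,y\}}$ and therefore commutes with it; hence $A_{[Y]^2} \subseteq Z \subseteq S$. For the unitaries, I would invoke the relations following Lemma~\ref{L.u_xsau}, namely $\Ad u_z \rs_{A_{\{x,y\}}} = \id$ whenever $z \notin \{x,y\}$. Every $z \in Y \setminus \{x\}$ satisfies $z \neq x$ and, since $y \notin Y$, also $z \neq y$; thus $u_z \in Z \subseteq S$ for all such $z$.

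The only remaining generator is $u_x$ itself, where we use the hypothesis $x \in Y$ so that $u_x$ genuinely appears among the generators of $B_{[Y]^2}$. Here $\Ad u_x \rs_{A_{\{x,y\}}} = \Ad v_{x,y}$, so $u_x$ does not commute with $A_{\{x,y\}}$. The key step is the computation that the twist $u_x v_{x,y}$ does: for $a \in A_{\{x,y\}}$, the algebra $A_{\{x,y\}}$ is invariant under $\Ad v_{x,y}$, so
\[
\Ad(u_x v_{x,y})(a) = \Ad u_x\big(\Ad v_{x,y}(a)\big) = \Ad v_{x,y}\big(\Ad v_{x,y}(a)\big) = a,
\]
the last equality using $v_{x,y}^2 = 1$. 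Hence $u_x v_{x,y} \in Z$, and therefore $u_x = (u_x v_{x,y}) v_{x,y} \in Z v_{x,y} \subseteq S$. This exhausts the generators, so $B_{[Y]^2} \subseteq S$, which is exactly the claim.

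I expect the only genuinely delicate point to be the reduction itself: one must verify that $S$ is a norm-closed C*-subalgebra of $B_{[X]^2}$, so that membership of the individual generators propagates to all of $B_{[Y]^2}$; the central-projection decomposition $S = Z p_+ \oplus Z p_-$ handles this cleanly. Everything else—that the factors $A_\eta$ and the unitaries $u_z$ with $z \neq x$ already lie in $Z$, and that multiplying $u_x$ by $v_{x,y}$ pushes it into $Z$—is a short consequence of the relations established in Lemma~\ref{L.u_xsau}.
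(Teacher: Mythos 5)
Your proof is correct and follows essentially the same route as the paper's: observe that the set $S=\{av_{x,y}+a' : a,a'\in Z_{B_{[X]^2}}(A_{\{x,y\}})\}$ is a subalgebra, then check the generators, with $A_{[Y]^2}$ and $u_z$ for $z\in Y\setminus\{x\}$ landing in the commutant and the key twist $u_x=(u_xv_{x,y})v_{x,y}$ with $u_xv_{x,y}\in Z_{B_{[X]^2}}(A_{\{x,y\}})$ handling the last generator. Your only addition is the explicit verification that $S$ is norm-closed via the central projections $p_\pm=(1\pm v_{x,y})/2$ and the decomposition $S=Zp_+\oplus Zp_-$, a detail the paper leaves implicit.
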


\begin{proof}
Since $v_{x,y}$ is a self-adjoint unitary in $A_{\{x,y\}}$,
the set of all elements in the form
$a v_{x,y} + a'$ for $a, a' \in Z_{B_{[X]^2}}(A_{\{x,y\}})$
is a subalgebra of $B_{[X]^2}$.
Hence it suffices to show that
the generators $A_{[Y]^{2}}\cup\{u_z\}_{z \in Y}$ of $B_{[Y]^2}$
are in this form.
We have $A_{[Y]^{2}} \subseteq Z_{B_{[X]^2}}(A_{\{x,y\}})$
since $y \notin Y$.
We have $u_z \in Z_{B_{[X]^2}}(A_{\{x,y\}})$
for $z \in Y\setminus \{x\}$.
Finally, we get $u_x = (u_xv_{x,y})v_{x,y}$
and $u_xv_{x,y} \in Z_{B_{[X]^2}}(A_{\{x,y\}})$.
We are done.
\end{proof}

\begin{prop} \label{P:notAF}
If $|X| > \aleph_1$ then $B_{[X]^2}$ is not AF.
\end{prop}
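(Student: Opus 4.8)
The plan is to use the AF-criterion of Lemma~\ref{L.LM-criterion} in its contrapositive form: I will assume for contradiction that $B_{[X]^2}$ is AF and derive a structure that collapses the cardinality $|X|$ to at most $\aleph_1$. Concretely, if $B_{[X]^2}$ is AF then there is a map $\rho\colon B_{[X]^2}\to B_{[X]^2}$ with $\|b-\rho(b)\|<1$ for all $b$ and with $C^*(\{\rho(b)\}_{b\in F})$ finite-dimensional for every finite $F$. The first step is to read off, for each generator $u_x$ (say), a \emph{countable} set $Y_x\subseteq X$ ``supporting'' the approximant $\rho(u_x)$: since $\rho(u_x)\in B_{[X]^2}$ and $B_{[X]^2}$ is the closed span of finitely-supported products, there is a countable $Y_x\subseteq X$ with $x\in Y_x$ and $\rho(u_x)\in B_{[Y_x]^2}$ (approximating $\rho(u_x)$ to within, say, $\tfrac14-\|u_x-\rho(u_x)\|$ by an element of some $B_{[Z]^2}$ with $Z$ finite, and taking the union over a dense sequence of approximations). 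This is the point where the machinery earlier in the paper pays off.

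Next I would invoke Lemma~\ref{Lem:aleph2} for the set $X$ and the assignment $x\mapsto Y_x$: since $|X|>\aleph_1$, there exist $x,y\in X$ with $x\notin Y_y$ and $y\notin Y_x$. Set $\xi=\{x,y\}\in[X]^2$. The idea is that the two approximants $\rho(u_x)$ and $\rho(u_y)$ live in $B_{[Y_x]^2}$ and $B_{[Y_y]^2}$ respectively, and because $y\notin Y_x$ and $x\notin Y_y$, Lemma~\ref{Lem:a+bv} applies to each: $\rho(u_x)=a\,v_{x,y}+a'$ and $\rho(u_y)=b\,v_{y,x}+b'$ with $a,a',b,b'\in Z_{B_{[X]^2}}(A_{\{x,y\}})$. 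Crucially, the generic pair condition on $(v_{x,y},v_{y,x})$ means that inside $C^*(\{\rho(u_x),\rho(u_y)\})$ one finds a free-product-like structure: the relevant words in $v_{x,y}$ and $v_{y,x}$ are linearly independent (Lemma~\ref{Lem:lin.indep} applied with $A=A_{\{x,y\}}\cong M_2(\bbC)$, whose relative commutant is where the coefficients live).

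The main obstacle, and the heart of the argument, is to show that this configuration forces $C^*(\{\rho(u_x),\rho(u_y)\})$ to be infinite-dimensional, contradicting the AF-criterion. The strategy is that the condition $\|u_x-\rho(u_x)\|<1$ keeps $\rho(u_x)$ close enough to the genuine self-adjoint unitary $u_x$ that its spectral/word structure cannot degenerate: because $(v_{x,y},v_{y,x})$ is generic, the group $(\bbZ/2\bbZ)*(\bbZ/2\bbZ)\cong D_\infty$ embeds, and the approximants inherit infinitely many linearly independent alternating words $(\rho(u_x)\rho(u_y))^n$, so the generated algebra cannot be finite-dimensional. Making this robust under the $<1$ perturbation is where I expect the real work to be; I would control it by arguing that a finite-dimensional $C^*(\{\rho(u_x),\rho(u_y)\})$ would yield, via the $\|u_z-\rho(u_z)\|<1$ bounds and Lemma~\ref{Lem:lin.indep}, an impossible linear dependence among the words $(v_{x,y}v_{y,x})^n,(v_{x,y}v_{y,x})^nv_{x,y}$ coming from the generic pair. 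This contradiction completes the proof that $B_{[X]^2}$ is not AF when $|X|>\aleph_1$.
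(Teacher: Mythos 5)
Your skeleton matches the paper's proof exactly up to the decisive step: you invoke Lemma~\ref{L.LM-criterion}, extract countable supports $Y_x$ with $\rho(u_x)\in B_{[Y_x]^2}$, apply Lemma~\ref{Lem:aleph2} to find $\{x,y\}$ with $x\notin Y_y$ and $y\notin Y_x$, and decompose via Lemma~\ref{Lem:a+bv}. But at the heart of the argument there is a genuine gap, and the heuristic you offer in its place is wrong as stated: if $C^*(\{\rho(u_x),\rho(u_y)\})$ is finite-dimensional, then the words $(\rho(u_x)\rho(u_y))^n$ \emph{are} linearly dependent --- that is precisely what the AF-criterion hands you, so the approximants cannot ``inherit infinitely many linearly independent alternating words.'' The correct move is to take a nontrivial relation $\sum_{n=0}^N\lambda_n(b_xb_y)^n=0$ with $\lambda_N\neq 0$ (writing $b_x=\rho(u_x)=a_xv_{x,y}+a_x'$, $b_y=a_yv_{y,x}+a_y'$), expand it as $\sum_{v\in V}f_vv$ over the alternating words $V$ in $v_{x,y},v_{y,x}$ with coefficients $f_v\in Z_{B_{[X]^2}}(A_{\{x,y\}})$, conclude $f_v=0$ for all $v$ from genericity together with Lemma~\ref{Lem:lin.indep}, and then derive a contradiction from the top coefficient $f_{(v_{x,y}v_{y,x})^N}=\lambda_N(a_xa_y)^N$. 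For this last step you must know that $a_x$ and $a_y$ are \emph{invertible}, and nothing in your sketch establishes this; it does not follow from $\|u_x-b_x\|<1$ alone, since $a_x$ could a priori be a zero divisor, in which case the vanishing of all $f_v$ is perfectly consistent.

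The missing idea is a conjugation-averaging trick with $w_{x,y}$. Since $\Ad u_x$ restricts to $\Ad v_{x,y}$ on $A_{\{x,y\}}$, the self-adjoint unitary $w_{x,y}$ anti-commutes with both $u_x$ and $v_{x,y}$ while commuting with $a_x,a_x'\in Z_{B_{[X]^2}}(A_{\{x,y\}})$; hence the contractive map $c\mapsto\bigl(c-w_{x,y}cw_{x,y}\bigr)/2$ sends $b_x$ to $a_xv_{x,y}$ and fixes $u_x$, giving $\|u_x-a_xv_{x,y}\|\leq\|u_x-b_x\|<1$, i.e.\ $\|u_xv_{x,y}-a_x\|<1$. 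As $u_xv_{x,y}$ is a unitary, $a_x$ is invertible, and likewise $a_y$; then $\lambda_N(a_xa_y)^N\neq 0$, the desired contradiction. One further slip to correct: $A_{\{x,y\}}$ is the CAR algebra, not $M_2(\bbC)$ --- a generic pair cannot exist in $M_2(\bbC)$, since the alternating words span an infinite-dimensional subspace --- and Lemma~\ref{Lem:lin.indep} is applied with $A=A_{\{x,y\}}$ the CAR algebra (which is LM), its relative commutant housing the coefficients $f_v$.
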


\begin{proof}
For the sake of obtaining a contradiction, assume that $B_{[X]^2}$ is AF. 
Then by Lemma~\ref{L.LM-criterion} 
there exists a family $\{b_x\}_{x \in X}$ in $B_{[X]^2}$
with $\|u_x - b_x\| < 1$ for all $x \in X$ such that $C^*(\{b_x\}_{x \in F})
\subseteq B_{[X]^2}$ is finite-dimensional for all finite subsets $F$ of $X$.

For each $x \in X$,
there exists a countable subset $Y_x$ of $X$ with $x \in Y_x$
such that $b_x \in B_{[Y_x]^2}$.
Since $|X| > \aleph_1$, 
we can apply Lemma~\ref{Lem:aleph2} to get $\{x,y\} \in [X]^2$
such that $x \notin Y_y$ and $y \notin Y_x$.
By Lemma~\ref{Lem:a+bv},
there exists $a_x, a_x', a_y, a_y' \in Z_{B_{[X]^2}}(A_{\{x,y\}})$
such that $b_x = a_x v_{x,y} + a_x'$
and $b_y = a_y v_{y,x} + a_y'$.
Since $\|u_x - b_x\| < 1$,
we have
\[
\big\|\big((u_x - b_x)-w_{x,y}(u_x - b_x)w_{x,y}\big)/2\big\| < 1.
\]
We have
\[
\big(b_x -w_{x,y}b_xw_{x,y}\big)/2
=\big((a_x v_{x,y} + a_x')-(-a_x v_{x,y} + a_x')\big)/2
=a_x v_{x,y},
\]
and similarly $(u_x - w_{x,y}u_xw_{x,y})/2 = u_x$.
Hence we get $\|u_x - a_x v_{x,y}\| <1$.
Thus $\|u_xv_{x,y} - a_x \| <1$.
Since $u_xv_{x,y}$ is a unitary,
$a_x$ is an invertible element.
Similarly, one can show that $a_y$ is also invertible.

By the assumption,
$C^*(\{b_x, b_y\})$ is finite-dimensional.
Therefore $\{(b_xb_y)^n\}_{n=0}^\infty$ is linearly dependent.
Hence there exist $N \in \bbN$ and
$\lambda_0, \lambda_1, \ldots, \lambda_N \in \bbC$
with $\lambda_N \neq 0$
such that $\sum_{n=0}^N \lambda_n(b_xb_y)^n=0$.
We can write
\[
\sum_{n=0}^N \lambda_n(b_xb_y)^n
= \sum_{n=0}^N \lambda_n\big((a_x v_{x,y} + a_x' )(a_y v_{y,x} + a_y')\big)^n
= \sum_{v \in V} f_v v
\]
where
\[
V
:= \{1, v_{x,y}, v_{y,x}, v_{x,y}v_{y,x}, v_{y,x}v_{x,y},
v_{x,y}v_{y,x}v_{x,y}, v_{y,x}v_{x,y}v_{y,x}, \ldots, (v_{x,y}v_{y,x})^N\}
\]
and for each $v \in V$,
$f_v \in Z_{B_{[X]^2}}(A_{\{x,y\}})$ is
a sum of products of
$\lambda_0, \lambda_1, \ldots, \lambda_N \in \bbC$
and $a_x, a_x', a_y, a_y' \in Z_{B_{[X]^2}}(A_{\{x,y\}})$.
Since $V \subseteq A_{\{x,y\}}$ is linearly independent,
we get $f_v =0$ for all $v \in V$ by Lemma~\ref{Lem:lin.indep}.
In particular,
$f_{(v_{x,y}v_{y,x})^N} = \lambda_N(a_xa_y)^N \in Z_{B_{[X]^2}}(A_{\{x,y\}})$ 
is $0$.
This cannot  happen because $\lambda_N \neq 0$
and both $a_x$ and $a_y$ are invertible.
Thus we get a contradiction.
We are done.
\end{proof}

\begin{remark}
When $|X|= \aleph_0$, $B_{[X]^2}$ is a UHF algebra 
(in fact CAR algebra) by Glimm's theorem \cite[Theorem~1.13]{Glimm:On}. 
When $|X|= \aleph_1$, $B_{[X]^2}$ is a unital AM algebra 
by Proposition~\ref{Prop:B=limCAR} and Theorem~\ref{Thm1}~(1).
In this case one can show that $B_{[X]^2}$ is not UHF 
in a similar (but much more complicated) way 
to the proof of Proposition~\ref{P.B-X}~(2) (see \cite{FaKa:NonseparableII}). 
\end{remark}

\begin{remark}
As  we pointed out in  Remark~\ref{rem:crosspro}, 
the examples in Section~\ref{Sec:AMnotUHF} 
of unital AM algebras which are not UHF 
are obtained as crossed products 
of UHF algebras by the group $\bbZ /2\bbZ$. 
The examples in this section of 
unital LM algebras which are not AM 
are obtained as {\em cocycle} crossed products 
(see Remark~\ref{rem:cocycle_cross_pro}). 
However we do not know the following. 
\end{remark}

\begin{problem}
Find an example of a unital LM algebra which is not AM 
such that it is obtained as a crossed product 
of a unital AM (or UHF) algebra by a discrete group. 
\end{problem}

\begin{remark}
We can solve the non-unital version of this problem 
using the examples in this section. 
In fact, 
by \cite[Corollary~3.7]{PaRae:Twisted} 
the tensor product $B_{[X]^2} \otimes K$ 
is obtained as an (ordinary) crossed product 
of $A_{[X]^2} \otimes K$ by the group $G_X$ 
where $K := K\big(\ell^2(G_X)\big)$ 
is the non-unital AM algebra of all compact operators 
on the Hilbert space $\ell^2(G_X)$. 
Thus for every cardinal $\kappa > \aleph_1$,
there exists an example of 
a {\em non-unital} LM algebra with character density $\kappa$
which is not AM 
such that it is obtained as a crossed product 
of a non-unital AM algebra by a discrete group. 
Note that $B_{[X]^2} \otimes K$ is not AM 
if $B_{[X]^2}$ is not AM
because every corner of an AM algebra is AM.

The same comments can be applied to LF and AF 
instead of LM and AM. 
\end{remark}

\section{Representation density and character density}
\label{S.rd}

The purpose of this section is to give an answer to 
the half of the question raised by Masamichi Takesaki
when the second author gave a talk on this paper. 
We could not answer the other half (Problem~\ref{P.Takesaki}). 
The proof uses the construction (Proposition~\ref{P.chi_neq_rd}) 
that was given by Bruce Blackadar 
when the first author gave a talk. 
Both authors would like to thank 
Masamichi Takesaki and Bruce Blackadar. 

For a Hilbert space $H$, 
we also denote by $\chi(H)$ 
the smallest cardinality of a dense subset of $H$.
Note that for an infinite-dimensional Hilbert space $H$ and 
an infinite set $X$, 
we get $\chi(H) = |X|$ 
if and only if $H$ is isomorphic to $\ell^2(X)$. 

\begin{definition} \label{Def:rd} 
The \emph{representation density} $\rd(A)$ of a C*-algebra $A$ 
is the smallest cardinal $\chi(H)$ 
where $H$ is a Hilbert space on which 
$A$ can be faithfully represented. 
\end{definition}

Note that both the representation density $\rd$ 
and the character density $\chi$ (Definition~\ref{D.cd}) 
are monotonic in the sense that if $A$ is a
subalgebra of $B$ then the density of $B$ is not smaller than the
density of $A$.

Since these cardinal invariants of C*-algebras were apparently not
considered previously, 
the reader will hopefully excuse us for
starting this section by listing a few trivial statements.

\begin{lemma} \label{L.rd.1} 
For every C*-algebra $A$ we have that
\begin{align*}
\chi(A)&\geq \sup\big\{|X|: \text{$X$ is a family of commuting
projections in $A$}\big\}\\
\rd(A)&\geq\sup\big\{|X|: \text{$X$ is a family of nonzero orthogonal
projections in $A$}\big\}.
\end{align*}
\end{lemma}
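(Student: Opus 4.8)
The plan is to derive both inequalities from a single elementary principle: if a subset $X$ of a metric space is \emph{$\delta$-separated} (meaning $\|x - y\| \geq \delta$ for all distinct $x, y \in X$) for some $\delta > 0$, and $D$ is a dense subset of the space, then the map sending each $x \in X$ to some $d \in D$ with $\|x - d\| < \delta/2$ is injective, so $|X| \leq |D|$. Choosing $D$ of minimal cardinality then yields $|X| \leq \chi$ of the relevant object. Thus in both cases it suffices to exhibit a fixed $\delta > 0$ for which the family in question is $\delta$-separated.

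For the first inequality I would first record that distinct commuting projections are at distance exactly $1$. If $p, q \in A$ are projections with $pq = qp$, then $pq$ is again a projection, and a direct expansion gives
\[
(p-q)^2 = p + q - 2pq = p(1-q) + q(1-p),
\]
a sum of two orthogonal projections, hence itself a projection; it is nonzero precisely when $p \neq q$. Since $p - q$ is self-adjoint, $\|p - q\|^2 = \|(p-q)^2\| = 1$ whenever $p \neq q$. Consequently any family $X$ of pairwise commuting projections is $1$-separated, and the separation principle applied in $A$ gives $\chi(A) \geq |X|$; taking the supremum over all such families yields the first inequality.

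For the second inequality I would pass to a faithful representation. Fix a Hilbert space $H$ on which $A$ is faithfully represented, and let $X = \{p_i\}$ be a family of nonzero pairwise orthogonal projections in $A$, so $p_i p_j = 0$ for $i \neq j$. Under the faithful representation each $p_i$ acts as a nonzero projection on $H$ with pairwise orthogonal ranges, so choosing a unit vector $\xi_i$ in the range of each $p_i$ produces an orthonormal family $\{\xi_i\}$ in $H$. Since $\|\xi_i - \xi_j\|^2 = 2$ for $i \neq j$, this family is $\sqrt{2}$-separated, so the separation principle applied in $H$ gives $|X| \leq \chi(H)$. As $H$ ranges over all Hilbert spaces carrying a faithful representation of $A$, the infimum of $\chi(H)$ is $\rd(A)$, whence $|X| \leq \rd(A)$; taking the supremum over all families $X$ yields the second inequality.

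The only computation requiring any care is the norm identity $\|p - q\| = 1$ for distinct commuting projections, and even this is routine once one observes that $(p-q)^2$ is a projection; all remaining steps amount to the separation principle applied once in $A$ and once in $H$. I therefore expect no serious obstacle, the mild subtlety being merely to note that orthogonality of the $p_i$ is exactly what converts them into an orthonormal family in the representation space.
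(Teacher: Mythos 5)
Your proof is correct and takes essentially the same approach as the paper, whose entire argument is the observation that distinct commuting projections satisfy $\|p-q\|=1$ (for the first inequality) together with the remark that the second part is obvious. You have merely filled in the details the paper leaves implicit: the identity $(p-q)^2 = p(1-q)+q(1-p)$ showing $(p-q)^2$ is a projection, the $\delta$-separation principle, and the passage to an orthonormal family in a faithful representation.
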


\begin{proof} 
For the first part note that if $p$ and $q$ are
distinct commuting projections then $\|p-q\|=1$. 
The second part is obvious.
\end{proof}

\begin{lemma} \label{L.rd.2} 
For every infinite-dimensional Hilbert space $H$ we have
\[
\chi\big(\cB(H)\big)
=|\cB(H)|
=2^{\chi(H)}.
\]
\end{lemma}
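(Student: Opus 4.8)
The plan is to establish the chain of inequalities
\[
2^{\chi(H)} \leq \chi\big(\cB(H)\big) \leq \big|\cB(H)\big| \leq 2^{\chi(H)},
\]
which forces all three quantities to coincide. Write $\kappa := \chi(H)$ and fix an orthonormal basis $\{e_i\}_{i\in I}$ of $H$ with $|I|=\kappa$; such a basis exists and has cardinality $\kappa$ because $H$ is infinite-dimensional, so $H$ is isomorphic to $\ell^2(I)$. The middle inequality $\chi(\cB(H)) \leq |\cB(H)|$ is immediate, since $\cB(H)$ is a dense subset of itself.

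For the rightmost inequality I would first bound $|H|$. Every element of $\ell^2(I)$ has countable support, so an element is specified by a countable subset of $I$ together with a $\bbC$-valued function on it; hence $|H| \leq \kappa^{\aleph_0}\cdot 2^{\aleph_0} \leq 2^{\kappa}$. Next, an operator $T \in \cB(H)$ is determined by its restriction to any fixed dense subset $D \subseteq H$ with $|D| = \kappa$, by continuity. Thus $T \mapsto T\rs_D$ embeds $\cB(H)$ into the set of functions from $D$ to $H$, giving
\[
\big|\cB(H)\big| \leq |H|^{|D|} \leq (2^{\kappa})^{\kappa} = 2^{\kappa}.
\]

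For the leftmost inequality I would apply Lemma~\ref{L.rd.1}. For each subset $S \subseteq I$ let $P_S$ denote the orthogonal projection onto the closed span of $\{e_i : i \in S\}$. These projections are all diagonal with respect to the fixed basis, hence mutually commute, and $S \mapsto P_S$ is injective, so $\{P_S : S \subseteq I\}$ is a family of commuting projections of cardinality $2^{\kappa}$. By Lemma~\ref{L.rd.1} we conclude $\chi(\cB(H)) \geq 2^{\kappa}$.

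Combining the three estimates closes the chain and yields $\chi(\cB(H)) = |\cB(H)| = 2^{\chi(H)}$. The only step requiring genuine care is the cardinal arithmetic bounding $|\cB(H)|$ from above; the remaining steps are direct applications of Lemma~\ref{L.rd.1} and of the fact that a bounded operator is determined by its values on a dense set. I do not expect a real obstacle beyond keeping the cardinal estimates honest, using $\kappa\cdot\kappa = \kappa$ and $(2^{\kappa})^{\kappa} = 2^{\kappa}$ for infinite $\kappa$.
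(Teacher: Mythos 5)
Your proof is correct, and its lower bound is exactly the paper's: the same family $\{p_Y : Y\subseteq X\}$ of $2^{\chi(H)}$ commuting diagonal projections fed into Lemma~\ref{L.rd.1}. Where you genuinely diverge is the upper bound. The paper injects $\cB(H)$ into $\prod_{x,y\in X}\bbC$ via the matrix-coefficient map $T\mapsto \big(p_{\{x\}}Tp_{\{y\}}\big)_{x,y\in X}$, using that each corner $p_{\{x\}}\cB(H)p_{\{y\}}$ is one-dimensional; this yields $|\cB(H)|\leq (2^{\aleph_0})^{\kappa\cdot\kappa}=2^{\kappa}$ in one stroke, with no need to count $H$ itself. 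You instead restrict operators to a fixed dense subset $D$ with $|D|=\kappa$ and separately bound $|H|\leq \kappa^{\aleph_0}\cdot 2^{\aleph_0}\leq 2^{\kappa}$ via countable supports in $\ell^2(I)$, getting $|\cB(H)|\leq |H|^{|D|}\leq (2^{\kappa})^{\kappa}=2^{\kappa}$. Both routes are routine cardinal estimates and both are honest; yours carries the extra step of bounding $|H|$, but it is more robust, since it uses only that a bounded (continuous) map is determined by its values on a dense set --- the same argument bounds the cardinality of the space of bounded operators between any Banach spaces of density character $\kappa$ --- whereas the paper's matrix-coefficient trick is specific to the orthonormal-basis picture and slightly more economical. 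Your cardinal arithmetic ($\kappa^{\aleph_0}\leq\kappa^{\kappa}=2^{\kappa}$ and $(2^{\kappa})^{\kappa}=2^{\kappa\cdot\kappa}=2^{\kappa}$) checks out, and the middle inequality $\chi(\cB(H))\leq|\cB(H)|$ is trivial as you say, so the sandwich closes.
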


\begin{proof} 
Let us choose an infinite set $X$ with $|X|=\chi(H)$, 
and identify $H$ with $\ell^2(X)$. 
For a subset $Y \subseteq X$, 
let $p_Y \in \cB(H)$ be 
the projection onto the subspace $\ell^2(Y) \subseteq H$. 
Then $\{p_Y\}_{Y \subseteq X}$ is 
a family of commuting projections of size $2^{|X|}$. 
Thus we have $\chi(\cB(H)) \geq 2^{|X|}$ by Lemma~\ref{L.rd.1}. 
For $x,y \in X$, $p_{\{x\}}\cB(H)p_{\{y\}}$ is one dimensional, 
and the map 
\[
\cB(H) \ni T \mapsto \big(p_{\{x\}}Tp_{\{y\}}\big)_{x,y\in X} 
\in \prod_{x,y\in X} \big(p_{\{x\}}\cB(H)p_{\{y\}}\big)
\cong \prod_{x,y\in X} \bbC
\]
is injective. 
Hence we get 
$\chi(\cB(H)) \leq |\cB(H)|\leq |\bbC|^{|X\times X|} =2^{|X|}$.
We are done. 
\end{proof}

If $K=2^{2^{\aleph_0}}$ with the product topology 
then $C(K) \cong \bigotimes_{2^{\aleph_0}}\bbC^2$ is an
abelian C*-algebra with character density $2^{\aleph_0}$ and
representation density $\aleph_0$. The first claim follows by
Lemma~\ref{L.tensor.density}. 
The second claim follows from the fact that $K$ is, 
being a product of $2^{\aleph_0}$ separable spaces, 
separable by the Hewitt--Marczewski--Pondiczery Theorem 
(see e.g., \cite[Corollary 2.3.16]{En:Topology}).
See also Corollary~\ref{C.rd.1}, Theorem~\ref{T.rd.1} and
Problem~\ref{P.Takesaki}.

\begin{lemma} 
For every C*-algebra $A$ we have
$\rd(A)\leq \chi(A)\leq 2^{\rd(A)}$.
\end{lemma}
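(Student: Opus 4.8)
The plan is to prove the two inequalities separately. For $\rd(A)\le\chi(A)$ I would build a faithful representation of $A$ on a Hilbert space whose density character is at most $\chi(A)$, while for $\chi(A)\le 2^{\rd(A)}$ I would view $A$ as a subalgebra of $\cB(H)$ for an optimal $H$ and invoke monotonicity together with Lemma~\ref{L.rd.2}. Throughout, the interesting case is $A\neq 0$, where both invariants are infinite cardinals (the density character of a nonzero Hilbert space is at least $\aleph_0$).

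For the first inequality, fix a dense subset $D\subseteq A$ with $|D|=\chi(A)$. For each nonzero $d\in D$ choose a state $\varphi_d$ of $A$ with $\varphi_d(d^*d)=\|d\|^2$ (such a state exists by the standard fact that the norm of a positive element is attained as a supremum over states), and let $(\pi_d,H_d,\xi_d)$ be its GNS triple, with $\|\xi_d\|=1$. Since $\|\pi_d(a)\xi_d\|\le\|a\|$, the map $a\mapsto\pi_d(a)\xi_d$ is continuous, so $\{\pi_d(c)\xi_d:c\in D\}$ is dense in $H_d=\overline{\pi_d(A)\xi_d}$; hence $\chi(H_d)\le|D|=\chi(A)$. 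Setting $\pi:=\bigoplus_d\pi_d$ on $H:=\bigoplus_d H_d$, we get $\chi(H)\le|D|\cdot\sup_d\chi(H_d)\le\chi(A)$ because $\chi(A)$ is infinite.

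The crux is faithfulness of $\pi$. Given a nonzero $a\in A$, pick $d\in D$ with $\|a-d\|<\|a\|/3$; then $\|d\|\ge\|a\|-\|a-d\|>2\|a\|/3>0$, so $d$ is one of the chosen indices. From $\|\pi_d(d)\xi_d\|^2=\varphi_d(d^*d)=\|d\|^2$ and $\|\xi_d\|=1$ we obtain $\|\pi_d(d)\|=\|d\|$, whence $\|\pi_d(a)\|\ge\|\pi_d(d)\|-\|\pi_d(a-d)\|\ge\|d\|-\|a-d\|>\|a\|/3>0$. Thus $\pi(a)\neq 0$, so $\pi$ is a faithful representation on a Hilbert space of density character at most $\chi(A)$, giving $\rd(A)\le\chi(A)$. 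I expect this faithfulness step to be the only real obstacle: one must guarantee that merely $\chi(A)$-many cyclic representations still separate all of $A$, which is exactly why the states are normalized so that each $\pi_d(d)$ attains the norm $\|d\|$ and density is then used to transfer this bound to an arbitrary $a$.

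For the second inequality, let $H$ be a Hilbert space with $\chi(H)=\rd(A)$ carrying a faithful representation of $A$. Since a faithful representation of a C*-algebra is isometric, $A$ is isomorphic to a subalgebra of $\cB(H)$, so by monotonicity of $\chi$ we have $\chi(A)\le\chi(\cB(H))$. If $H$ is infinite-dimensional, Lemma~\ref{L.rd.2} gives $\chi(\cB(H))=2^{\chi(H)}=2^{\rd(A)}$; if $H$ is finite-dimensional, then $\cB(H)$ is separable and $\rd(A)=\aleph_0$, so $\chi(A)\le\aleph_0\le 2^{\rd(A)}$. In either case $\chi(A)\le 2^{\rd(A)}$, completing the proof.
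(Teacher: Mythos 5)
Your proof is correct and takes essentially the same route as the paper: a direct sum of norm-attaining cyclic (GNS) representations indexed by a dense subset of cardinality $\chi(A)$ gives $\rd(A)\leq\chi(A)$, and embedding $A$ into $\cB(H)$ for an optimal $H$ and applying Lemma~\ref{L.rd.2} gives $\chi(A)\leq 2^{\rd(A)}$. The paper leaves the faithfulness of the direct sum implicit, whereas you verify it explicitly with the $\|a\|/3$ estimate (and handle the finite-dimensional case of $H$); these are added details, not a different method.
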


\begin{proof} 
Choose a subset $X \subseteq A$ with $|X| = \chi(A)$. 
For each $x \in X$, 
there exists a cyclic representation 
$\pi_x\colon A \to \cB(H_x)$ with $\|\pi_x(x)\|=\|x\|$ 
(see \cite[Corollary~II.6.4.9]{Black:Operator}). 
Since $H_x$ has a cyclic vector for $\pi_x$, 
we have $\chi(H_x) \leq \chi(A)$. 
Then the representation
\[
\pi := \bigoplus_{x \in X} \pi_x 
\colon A \to \cB\Big( \bigoplus_{x \in X}H_x \Big)
\]
is faithful, and 
\[
\chi\Big( \bigoplus_{x \in X}H_x \Big)
= \sum_{x \in X} \chi (H_x)
\leq |X| \times \chi(A) = \chi(A)
\]
Hence $\rd(A) \leq \chi(A)$. 
The second inequality $\chi(A)\leq 2^{\rd(A)}$ follows from
Lemma~\ref{L.rd.2}. 
\end{proof}

\begin{lemma}\label{L.rd.tp}
Let $X_0 \ni x \mapsto \xi_x \in H$ be a map 
from a set $X_0$ to a Hilbert space $H$ 
such that $|X_0| > \chi(H)$. 
Then for every $\e > 0$, 
there exists $X_1 \subseteq X_0$ with $|X_1| > \chi(H)$ 
such that $\|\xi_{x}-\xi_{y}\|<\e$ 
for every $x,y \in X_1$. 
\end{lemma}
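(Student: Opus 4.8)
The plan is to run a pigeonhole argument against a dense subset of $H$. Set $\kappa := \chi(H)$. The case $H = 0$ is trivial (all $\xi_x$ vanish, so $X_1 = X_0$ works), so I may assume $H \neq 0$; since a nonzero Hilbert space has no finite dense subset, $\kappa$ is then infinite, which is exactly what makes the cardinal arithmetic below go through. First I would fix a dense subset $D \subseteq H$ with $|D| = \kappa$. By density, the open balls of radius $\e/2$ centered at points of $D$ cover $H$, so for each $x \in X_0$ I can choose some $f(x) \in D$ with $\|\xi_x - f(x)\| < \e/2$. This produces a map $f \colon X_0 \to D$.

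The key step is the counting. If every fiber $f^{-1}(d)$ had cardinality at most $\kappa$, then $|X_0| = \sum_{d \in D} |f^{-1}(d)| \leq |D| \cdot \kappa = \kappa \cdot \kappa = \kappa$, contradicting the hypothesis $|X_0| > \kappa$. Hence there is some $d \in D$ whose fiber satisfies $|f^{-1}(d)| > \kappa$. Setting $X_1 := f^{-1}(d)$, we get $|X_1| > \kappa = \chi(H)$, and for any $x, y \in X_1$ the triangle inequality gives $\|\xi_x - \xi_y\| \leq \|\xi_x - d\| + \|d - \xi_y\| < \e/2 + \e/2 = \e$, as required.

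There is essentially no obstacle here: the whole argument is elementary pigeonhole, and the only points that need care are (i) the identity $\kappa \cdot \kappa = \kappa$, which is valid precisely because $\kappa = \chi(H)$ is infinite, and (ii) the observation that the sum estimate forces a fiber to \emph{strictly} exceed $\kappa$ rather than merely equal it, so that $X_1$ genuinely has cardinality $> \chi(H)$ and not just $\geq \chi(H)$.
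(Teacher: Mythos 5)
Your proof is correct and takes essentially the same approach as the paper's: both fix a dense subset of cardinality $\chi(H)$, choose for each $\xi_x$ a point of it within $\e/2$, apply the pigeonhole principle to extract a fiber of cardinality strictly greater than $\chi(H)$, and conclude by the triangle inequality. The only difference is cosmetic — you make explicit the cardinal arithmetic ($\kappa\cdot\kappa=\kappa$ for infinite $\kappa$, plus the degenerate case $H=0$) that the paper leaves implicit.
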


\begin{proof}
Choose a dense subset $Y \subseteq H$ with $|Y|=\chi(H)$.
For each $x \in X_0$ there exists $\eta(x) \in Y$ 
such that $\|\xi_x - \eta(x)\|<\e/2$. 
Since $|X_0| > \chi(H) = |Y|$, 
there exists $\eta \in Y$ such that the set 
$X_1 := \{x \in X_0 : \eta(x)=\eta \} \subseteq X_0$ 
satisfies $|X_1| > \chi(H)$. 
Then for every $x,y \in X_1$, we get 
\[
\|\xi_{x}-\xi_{y}\| 
\leq \|\xi_x - \eta\| + \|\xi_y - \eta\|<\e.
\qedhere
\]
\end{proof}

\begin{prop} \label{P.rd.1} 
For a family $\{A_x\}_{x \in X}$ of 
nonabelian unital C*-algebras, 
the representation density of 
the tensor product $A=\bigotimes_{x \in X} A_x$ 
is at least $|X|$.
\end{prop}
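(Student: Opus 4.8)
The plan is to argue by contradiction, assuming a faithful representation $\pi\colon A\to\cB(H)$ with $\kappa:=\chi(H)<|X|$, and to use the pigeonhole principle of Lemma~\ref{L.rd.tp} to clash with nonabelianness. Since each $A_x$ is nonabelian, taking real and imaginary parts yields self-adjoint contractions $a_x,b_x\in A_x$ with $\|[a_x,b_x]\|>0$; as $\pi$ is faithful and hence isometric on $A_x\subseteq A$, we have $\|[\pi(a_x),\pi(b_x)]\|=\|[a_x,b_x]\|>0$. The amount of noncommutativity may vary with $x$, so I would first uniformize it: since $X=\bigcup_{n\in\bbN}\{x:\|[a_x,b_x]\|>1/n\}$ and $\aleph_0\cdot\kappa=\kappa<|X|$, some $n$ gives a set $X_0$ with $|X_0|>\kappa$ and $\|[a_x,b_x]\|>2\delta$ for all $x\in X_0$, where $\delta:=1/(2n)$. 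For each $x\in X_0$ I choose a unit vector $\xi_x$ with $\|[\pi(a_x),\pi(b_x)]\xi_x\|>\delta$.

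Fix $\e:=\delta/10$. The next step is three successive applications of Lemma~\ref{L.rd.tp}, each preserving cardinality $>\kappa$: first to $x\mapsto\xi_x$ to obtain $X_1\subseteq X_0$ on which all $\xi_x$ lie within $\e$ of a fixed $\xi:=\xi_{x_0}$; then to $x\mapsto\pi(a_x)\xi$ to obtain $X_2\subseteq X_1$; then to $x\mapsto\pi(b_x)\xi$ to obtain $X_3\subseteq X_2$. Using $\|[\pi(a_x),\pi(b_x)]\|\le2$ together with $\|\xi-\xi_x\|<\e$, every $x\in X_1$ satisfies $\|[\pi(a_x),\pi(b_x)]\xi\|>\delta-2\e$, while for all $x,y\in X_3$ we have $\|\pi(a_x)\xi-\pi(a_y)\xi\|<\e$ and $\|\pi(b_x)\xi-\pi(b_y)\xi\|<\e$.

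The crux is then a cancellation exploiting that operators coming from different tensor factors commute. For distinct $x,y\in X_3$, writing $P=\pi(a_x),Q=\pi(b_x),R=\pi(a_y),S=\pi(b_y)$, the cross-commutations $PS=SP$ and $QR=RQ$ let me substitute $Q\xi\approx S\xi$ and $P\xi\approx R\xi$ to get $PQ\xi\approx SR\xi$ and $QP\xi\approx RS\xi$ up to $2\e$ each, hence $\|[P,Q]\xi+[R,S]\xi\|<4\e$; note the crucial sign flip, $SR\xi-RS\xi=-[R,S]\xi$. Since $X_3$ is infinite I may pick three distinct $x,y,z$ and write this estimate for the pairs $(x,y)$, $(y,z)$, $(x,z)$; the alternating combination $(1)-(2)+(3)$ telescopes to $2[\pi(a_x),\pi(b_x)]\xi$, giving $\|[\pi(a_x),\pi(b_x)]\xi\|\le6\e$. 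This contradicts $\|[\pi(a_x),\pi(b_x)]\xi\|>\delta-2\e$ because $\e=\delta/10$ makes $\delta-2\e>6\e$. I expect the main obstacle to be precisely this sign-antisymmetry observation together with arranging a single common vector $\xi$: exchanging the two factors \emph{reverses} the commutator, so two witnesses only give $[P,Q]\xi\approx-[R,S]\xi$, and it is the third witness that forces the commutator to be approximately its own negative and hence negligible; the reduction to a common $\xi$ and to a uniform $\delta$ is exactly what makes Lemma~\ref{L.rd.tp} applicable.
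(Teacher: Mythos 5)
Your proof is correct, and its skeleton is the same as the paper's: assume a faithful representation on $H$ with $\chi(H)<|X|$, uniformize the amount of noncommutativity to a single $\delta>0$ by a countable pigeonhole (the paper does this in one line, ``since $X$ is uncountable''), then cluster vectors via Lemma~\ref{L.rd.tp} so that commutativity of distinct tensor factors collapses the commutators. The genuine difference is the endgame. The paper applies Lemma~\ref{L.rd.tp} \emph{four} times --- to $\xi_x$, $\pi(a_x)\xi_x$, $\pi(b_x)\xi_x$, and crucially to the \emph{products} $\pi(b_xa_x)\xi_x$ --- after which two witnesses $x\neq y$ suffice: the single cross-commutation $a_xb_y=b_ya_x$ chains $\pi(a_xb_x)\xi_x$ to $\pi(b_xa_x)\xi_x$ within $4\e$, taking $\e=\delta/4$. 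You cluster only three families (and pass to one fixed reference vector $\xi$, a harmless reformulation), so two witnesses can only deliver the antisymmetric estimate $\|[P,Q]\xi+[R,S]\xi\|<4\e$ --- as you rightly observe, swapping the roles of the two witnesses reverses the sign of the commutator, so no pair alone gives a contradiction. Your third witness and the alternating combination $(C_x+C_y)-(C_y+C_z)+(C_x+C_z)=2C_x$ then correctly yield $\|C_x\|<6\e$, against the lower bound $\|C_x\|>\delta-2\e=8\e$ at $\e=\delta/10$; I checked the two $2\e$-substitutions (e.g.\ $PQ\xi\approx_\e PS\xi=SP\xi\approx_\e SR\xi$), the sign flip, and the constants, and all are sound. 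In short: your three-witness sign-flip trick trades away one application of the clustering lemma, while the paper's extra clustering of the products buys a shorter two-witness finish; the two arguments are of essentially equal depth, and yours has the small expository merit of isolating exactly why a pair of witnesses cannot suffice.
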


\begin{proof} 
Assume the contrary and fix a faithful representation
$\pi\colon A \to \cB(H)$ for a Hilbert space $H$ 
with $|X| > \chi(H)$. 
Note that this assumption implies that $X$ is uncountable. 
For each $x \in X$, 
fix $a_x$ and $b_x$ in the unit ball of $A_x$ 
such that $a_x b_x \neq b_x a_x$.
Since $\pi$ is faithful, 
we can choose a vector $\xi_x \in H$ such that
\[
\pi(a_x b_x - b_x a_x)\xi_x \neq 0.
\]
Since $X$ is uncountable, 
there exist $\delta>0$ and a subset $X_0 \subseteq X$ 
with $|X_0| > \chi(H)$ such that for all $x \in X_0$ 
we have 
\[
\|\pi(a_x b_x - b_x a_x)\xi_x\| \geq \delta.
\]
Set $\e = \delta/4>0$. 
In this proof, 
we write $a \approx_{\e} b$ if $\| a - b \| < \e$. 
Since $|X_0| > \chi(H)$, 
we can apply Lemma~\ref{L.rd.tp} to 
$\{\xi_x\}_{x \in X_0}$ and $\e>0$ 
to get $X_1 \subseteq X_0$ with $|X_1| > \chi(H)$ 
such that $\xi_{x} \approx_{\e} \xi_{y}$ 
for every $x,y \in X_1$. 
By applying Lemma~\ref{L.rd.tp} three more times 
to $\{ \pi(a_x)\xi_x \}_{x \in X_1}$ and so on, 
we get $X_4 \subseteq X_1$ with $|X_4| > \chi(H)$ 
such that 
\[
\pi(a_x)\xi_{x} \approx_{\e} \pi(a_y)\xi_{y}, \quad 
\pi(b_x)\xi_{x} \approx_{\e} \pi(b_y)\xi_{y}, 
\]
\[
\pi(b_xa_x)\xi_{x} \approx_{\e} \pi(b_ya_y)\xi_{y}
\]
for every $x,y \in X_4$. 
Since $|X_4| > \chi(H) \geq \aleph_0$, 
we can take two distinct $x,y \in X_4$. 
Then we have 
\begin{align*}
\pi(a_xb_x)\xi_{x} = 
\pi(a_x) \pi(b_x)\xi_{x} \approx_{\e} 
\pi(a_x) \pi(b_y)\xi_{y} =
\pi(a_x b_y)\xi_{y} 
\approx_{\e} 
\pi(a_x&b_y)\xi_{x} \\
&\rotatebox{90}{=} \\
\pi(b_xa_x)\xi_{x} \approx_{\e} 
\pi(b_ya_y)\xi_{y} =
\pi(b_y) \pi(a_y)\xi_{y}\,\approx_{\e}
\pi(b_y) \pi(a_x)\xi_{x} = 
\pi(b_y&a_x)\xi_{x} 
\end{align*}
because $a_x \in A_x \subseteq A$ and $b_y \in A_y \subseteq A$ 
commute. 
Thus we get 
\[
\big\|\pi(a_xb_x - b_xa_x)\xi_{x}\big\| < 4\e = \delta,
\]
which is a contradiction.
This completes the proof.
\end{proof}

\begin{corollary} \label{C.rd.1} 
If $A$ is a UHF algebra then $\chi(A)=\rd(A)$. \qed
\end{corollary}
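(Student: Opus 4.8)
The plan is to read both cardinal invariants off the tensor-product presentation of $A$ and show that each equals the number of nontrivial tensor factors. By the definition of UHF I may write $A=\bigotimes_{x\in X}M_{n_x}(\bbC)$. A factor with $n_x=1$ is $\bbC$ and contributes nothing to the tensor product, so the first move is to pass to $X'=\{x\in X:n_x\ge 2\}$ and use the identification $A\cong\bigotimes_{x\in X'}M_{n_x}(\bbC)$. After this reduction every remaining factor is finite-dimensional (hence separable), nonabelian, and not $\bbC$, which is exactly what makes the earlier results applicable.

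If $X'$ is finite, then $A$ is a finite-dimensional (nonzero) C*-algebra. It has a countable dense subset, so $\chi(A)=\aleph_0$, and it is faithfully represented on a finite-dimensional Hilbert space $H$ with $\chi(H)=\aleph_0$, whence $\rd(A)=\aleph_0=\chi(A)$. If $X'$ is infinite, I would instead sandwich $\rd(A)$ between two copies of $|X'|$. Since each $M_{n_x}(\bbC)$ with $x\in X'$ is separable and not $\bbC$, Lemma~\ref{L.tensor.density} gives $\chi(A)=|X'|$; the general inequality $\rd(A)\le\chi(A)$ established above then yields $\rd(A)\le|X'|$. On the other hand each factor is nonabelian and unital, so Proposition~\ref{P.rd.1} gives $\rd(A)\ge|X'|$. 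Combining the two bounds, $\rd(A)=|X'|=\chi(A)$.

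There is no genuinely hard step: once the degenerate factors are discarded, the substantive content is entirely carried by Proposition~\ref{P.rd.1}. The only care needed is bookkeeping around the trivial cases, namely removing the $\bbC$ factors so that both Lemma~\ref{L.tensor.density} and Proposition~\ref{P.rd.1} apply with their stated hypotheses, and separating off the finite-dimensional situation in which $|X'|$ is finite yet both densities equal $\aleph_0$.
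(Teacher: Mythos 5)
Your proof is correct and follows exactly the route the paper intends: the corollary is left as immediate from Lemma~\ref{L.tensor.density} (giving $\chi(A)=|X'|$), the general inequality $\rd(A)\le\chi(A)$, and Proposition~\ref{P.rd.1} (giving $\rd(A)\ge|X'|$). Your explicit bookkeeping---discarding the $M_1(\bbC)=\bbC$ factors and treating the finite-dimensional case separately so that the hypotheses of both cited results are met---is a careful touch the paper silently glosses over, but it does not change the argument.
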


With the possible exception of the algebras $A_{X,Y}$ as defined in
\S\ref{Sec:LMnotUHF}, each example of an AM, or even LM, algebra
given so far has a UHF subalgebra with the same character density.
Since the algebras $A_{X,Y}$ are tensor products of separable
algebras, Proposition~\ref{P.rd.1} implies that for each AM or LM
algebra $A$ so far defined in this paper we have $\chi(A)=\rd(A)$.
We are going to show that $\chi(A)$ 
can be any cardinality between $\rd(A)$ and $2^{\rd(A)}$
for unital AM algebras $A$. 

Let $X$ be an infinite set. 
As in Section~\ref{Sec:AMnotUHF}, 
let $A_x$ be a C*-algebra
generated by two self-adjoint unitaries $v_x, w_x$
with $v_x w_x = - w_x v_x$ for each $x \in X$, 
and let $A_X := \bigotimes_{x \in X} A_x$. 
By Lemma~\ref{Lem:M2}, 
$A_x \cong M_2(\bbC)$ for each $x \in X$ 
and hence $A_X \cong \bigotimes_X M_2(\bbC)$ is a UHF algebra.
For each $Y\subseteq X$,
we set 
\[
A_Y := \bigotimes_{x \in Y} A_x \subseteq A_X.
\]

We are going to use the GNS representation of $A_X$ 
associated with the unique tracial state of $A_X$. 
For the reader's convenience we explain what it is. 
For each finite subset $F \subseteq X$, 
there exists a unique linear functional $\tau_F \colon A_F \to \bbC$ 
satisfying the trace condition $\tau_F(ab)=\tau_F(ba)$ 
for $a,b \in A_F$ and the normalized condition $\tau_F(1)=1$.
If $|F|=n$, then we have $\tau_F = 2^{-n}\text{Tr}$ 
where $\text{Tr}$ is the usual trace of $A_F \cong M_{2^{n}}(\bbC)$. 
It is easy to see that $\tau_F$ is positive and faithful, 
that is, $\tau_F(a^*a) > 0$ for all $a \in A_F \setminus \{0\}$. 
Let $A_X^{\text{fin}} := \bigcup_{F\subseteq X} A_F \subseteq A_X$ 
where $F$ runs all finite subsets of $X$. 
By the uniqueness of the tracial state $\tau_F$, 
we get $\tau_{F'}\rs_{A_F}=\tau_F$ 
for two finite subsets $F \subseteq F' \subseteq X$.
Thus we get a linear map $\tau \colon A_X^{\text{fin}}\to \bbC$ 
such that $\tau \rs_{A_F} = \tau_F$ 
for every finite subset $F \subseteq X$. 
Although we do not need it, 
we would like to remark that 
$\tau$ can be extended to the unique tracial state of $A_X$ 
(cf.\ \cite[Lemma~I.9.5]{Dav:C*}). 
We define an inner product on $A_X^{\text{fin}}$ 
by $A_X^{\text{fin}}\times A_X^{\text{fin}} 
\ni (a,b) \mapsto \tau(ab^*) \in \bbC$. 
Then the completion $H_X$ of $A_X^{\text{fin}}$ 
with respect to the norm coming from the inner product defined as above 
becomes a Hilbert space. 
The embedding from $A_X^{\text{fin}}$ to $H_X$ is denoted by 
$A_X^{\text{fin}} \ni a \mapsto \hat{a} \in H_X$. 
The image of this embedding is dense in $H_X$. 
For each finite subset $F \subseteq X$ and each $a \in A_F$, 
it is easy to see that 
the map $\hat{b} \mapsto \widehat{ab}$ 
extends to a bounded operator on $H_X$. 
Thus we get a $*$-ho\-mo\-mor\-phism $\pi_F \colon A_F \to B(H_X)$ 
such that $\pi_F(a)(\widehat{b})=\widehat{ab}$ 
for $a \in A_F$ and $b \in A_X^{\text{fin}}$. 
We have $\pi_{F'}\rs_{A_F}=\pi_F$ 
for two finite subsets $F \subseteq F' \subseteq X$.
Since the family $\{\pi_{\{x\}}[A_{\{x\}}]\}_{x\in X}$ 
mutually commutes, 
we get a representation $\pi\colon A_X \to B(H_X)$ 
such that $\pi \rs_{A_F} = \pi_F$ 
for every finite subset $F \subseteq X$. 
This representation is called the GNS representation 
associated with $\tau$. 
Since $\pi(a)(\widehat{a^*})=\widehat{aa^*}\neq 0$ 
for all $F \subseteq X$ and all $a \in A_F \setminus\{0\}$, 
$\pi$ is injective. 
In order to simplify the notation 
we identify $A_X$ with the subalgebra $\pi[A_X]$ of $B(H_X)$. 

\begin{lemma}\label{L.chi(HX)}
We have $\chi(H_X) = |X|$. 
\end{lemma}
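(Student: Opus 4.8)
The plan is to prove the two inequalities $\chi(H_X) \le |X|$ and $\chi(H_X) \ge |X|$ separately: the first by exhibiting a dense subset of $H_X$ of cardinality $|X|$, and the second by exhibiting an orthonormal family of cardinality $|X|$.

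For the upper bound I would use that $A_X^{\text{fin}} = \bigcup_F A_F$, where $F$ ranges over the finite subsets of $X$, and that the set $\{\hat a : a \in A_X^{\text{fin}}\}$ is dense in $H_X$ by construction. Since $X$ is infinite there are exactly $|X|$ finite subsets $F \subseteq X$, and each $A_F \cong M_{2^{|F|}}(\bbC)$ is separable in the operator norm, so I may fix a countable norm-dense subset $C_F \subseteq A_F$. For $a \in A_F$ the Hilbert-space norm satisfies $\tau(aa^*) \le \|aa^*\| = \|a\|^2$ because $\tau\rs_{A_F}=\tau_F$ is a state; hence norm-convergence implies convergence in $H_X$, and so $\{\hat a : a \in \bigcup_F C_F\}$ is dense in $\{\hat a : a \in A_X^{\text{fin}}\}$ and therefore in $H_X$. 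As $\big|\bigcup_F C_F\big| \le |X| \cdot \aleph_0 = |X|$, this yields $\chi(H_X) \le |X|$.

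For the lower bound I would use the family $\{\hat v_x\}_{x \in X}$. For distinct $x,y \in X$ the elements $v_x$ and $v_y$ lie in different tensor factors of $A_{\{x,y\}}$, and since $\tau\rs_{A_{\{x,y\}}}$ is the product of the normalized traces on $A_{\{x\}}$ and $A_{\{y\}}$ (by uniqueness of the normalized trace on a full matrix algebra), I get $\tau(v_x v_y^*) = \tau(v_x)\tau(v_y) = 0$, using that $v_x$ is a traceless self-adjoint unitary, being $\neq \pm 1$ as it anticommutes with $w_x$. Also $\tau(v_x v_x^*) = \tau(1) = 1$, so $\{\hat v_x\}_{x \in X}$ is orthonormal, hence its members are pairwise at distance $\sqrt 2$. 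Any dense $D \subseteq H_X$ must contain a point within $\tfrac{\sqrt2}{2}$ of each $\hat v_x$, and by the triangle inequality these points are distinct, so $|D| \ge |X|$ and $\chi(H_X) \ge |X|$. Combining the two bounds gives $\chi(H_X) = |X|$.

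The only genuinely computational ingredient is the multiplicativity of $\tau$ over disjoint finite index sets together with $\tau(v_x)=0$, and this is immediate from the explicit description $\tau_F = 2^{-|F|}\,\mathrm{Tr}$ recorded above; everything else is cardinality bookkeeping, so I do not expect a real obstacle here.
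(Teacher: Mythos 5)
Your proposal is correct and takes essentially the same route as the paper: the upper bound via the density of the union of the separable (finite-dimensional) subspaces $\{\hat{a} : a \in A_F\}$ over the $|X|$-many finite $F \subseteq X$, and the lower bound via the $\sqrt{2}$-separated family $\{\widehat{v_x}\}_{x \in X}$ with $\|\widehat{v_x}-\widehat{v_y}\|^2 = 2 - 2\tau(v_xv_y) = 2$. The only cosmetic difference is the justification of $\tau(v_xv_y)=0$: the paper derives it from the trace property combined with the anticommutation $v_xw_x=-w_xv_x$, whereas you use multiplicativity of $\tau$ across commuting tensor factors together with $\tau(v_x)=0$; both are valid.
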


\begin{proof}
Since the union of finite-dimensional subspaces 
$\{\hat{a} \in H_X \mid a \in A_F\}$ for finite subsets $F \subseteq X$ 
is dense in $H_X$, 
we have $\chi(H_X) \leq |X|$. 
For distinct $x,y \in X$, 
we have $\tau(u_xu_y)=0$ because 
\begin{align*}
\tau(u_xu_y)
&=\tau(w_x(w_xu_xu_y))
=\tau((w_xu_xu_y)w_x)\\
&=\tau(w_xu_x(w_xu_y))
=\tau(w_x(-w_xu_x)u_y)
=-\tau(u_xu_y).
\end{align*}
Hence we get 
\[
\|\widehat{u_x}-\widehat{u_y}\|^2 = \tau((u_x-u_y)(u_x-u_y))
= \tau( 2- 2u_xu_y) =2 
\]
for all $x,y \in X$ with $x \neq y$. 
This shows that $\chi(H_X) \geq |X|$. 
Thus we get $\chi(H_X) = |X|$. 
\end{proof}

We can consider the power-set $\cP(X)$ of a set $X$ 
as an abelian group with respect to the symmetric difference. 
This group is naturally isomorphic to the direct product of $X$ 
copies of $\bbZ/2\bbZ$.
For $g\in \cP(X)$ consider an
automorphism of $A_X$ defined by
\[
\alpha_g =\bigotimes_{x\in g} \Ad v_x.
\]
Then $\alpha$ defines an action of $\cP(X)$ on $A_X$.
For each $g\in \cP(X)$, 
the automorphism $\alpha_g$ 
preserves the subalgebra $A_F \subseteq A_X$ 
and satisfies $\tau_F \circ \alpha_g = \tau_F$ 
for every finite subset $F \subseteq X$. 
Hence we get an element $u_g \in B(H_X)$ 
such that $u_g(\widehat{b})=\alpha_g\widehat{(}b)$ 
for $b \in A_X^{\text{fin}}$. 

\begin{lemma} 
The elements $\{u_g\}_{g \in \cP(X)} \subseteq B(H_X)$ 
are self-adjoint unitaries 
satisfying $u_g a u_g = \alpha_g(a)$ and $u_gu_h=u_{gh}$ 
for $a \in A_X \subseteq B(H_X)$ and $g,h\in \cP(X)$. 
\end{lemma}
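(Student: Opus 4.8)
The plan is to transport two purely algebraic facts about the automorphisms $\alpha_g$ to the Hilbert space $H_X$: that each $\alpha_g$ is a trace-preserving $*$-automorphism with $\alpha_g^2=\id$, and that $g\mapsto\alpha_g$ is a homomorphism from the group $\cP(X)$ (under symmetric difference $\Delta$) into $\Aut(A_X)$. Both facts are checked factor by factor using that each $\Ad v_x$ is an involution (because $v_x$ is a self-adjoint unitary): in the composite $\alpha_g\circ\alpha_h$ the factor at $x$ is $\Ad v_x$ precisely when $x$ lies in exactly one of $g,h$, so $\alpha_g\circ\alpha_h=\alpha_{g\Delta h}$, and in particular $\alpha_g\circ\alpha_g=\alpha_\emptyset=\id$.

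Granting this, I would first show that each $u_g$ is a self-adjoint unitary. Since $\tau\circ\alpha_g=\tau$ (as already recorded), for $b\in A_X^{\text{fin}}$ we have $\|\widehat{\alpha_g(b)}\|^2=\tau\big(\alpha_g(b)\alpha_g(b)^*\big)=\tau\big(\alpha_g(bb^*)\big)=\tau(bb^*)=\|\widehat b\|^2$, so $u_g$ is isometric on the dense subspace $\{\widehat b:b\in A_X^{\text{fin}}\}$ and hence on all of $H_X$. Because $\alpha_g$ is surjective, the range of $u_g$ contains this dense subspace, so $u_g$ is a surjective isometry, i.e.\ a unitary. The homomorphism property yields $u_gu_h\widehat b=\widehat{\alpha_g(\alpha_h(b))}=\widehat{\alpha_{g\Delta h}(b)}=u_{gh}\widehat b$ on the dense subspace, whence $u_gu_h=u_{gh}$; taking $h=g$ gives $u_g^2=u_\emptyset=\id$, and therefore $u_g^*=u_g^{-1}=u_g$.

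Finally I would verify the covariance relation. Using $\pi(a)\widehat b=\widehat{ab}$, for $a,b\in A_X^{\text{fin}}$ I compute
\[
u_g\,\pi(a)\,u_g\,\widehat b
=\widehat{\alpha_g\big(a\,\alpha_g(b)\big)}
=\widehat{\alpha_g(a)\,\alpha_g(\alpha_g(b))}
=\widehat{\alpha_g(a)\,b}
=\pi(\alpha_g(a))\,\widehat b,
\]
where the middle equalities use that $\alpha_g$ is a $*$-homomorphism and that $\alpha_g^2=\id$. As this holds on the dense set of vectors $\widehat b$, it gives $u_g\,\pi(a)\,u_g=\pi(\alpha_g(a))$ for every $a\in A_X^{\text{fin}}$; since both sides are norm-continuous in $a$ and agree on the dense $*$-subalgebra $A_X^{\text{fin}}$, the identity extends to all $a\in A_X$. (Recall $u_g$ is self-adjoint, so $u_g a u_g=\Ad u_g(a)$, matching the stated relation.) The argument is entirely routine; the only points needing a little care are confirming that $u_g$ is \emph{surjective}, not merely isometric — which is exactly where $\alpha_g=\alpha_g^{-1}$ is used — and the factorwise check that $\alpha_g\circ\alpha_h=\alpha_{g\Delta h}$.
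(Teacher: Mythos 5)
Your proof is correct and follows essentially the same route as the paper's: both hinge on trace-preservation of $\alpha_g$ to get that $u_g$ is unitary, on $\alpha_g^{-1}=\alpha_g$ for self-adjointness, and on the identities $\alpha_g(a\,\alpha_g(b))=\alpha_g(a)\,b$ and $\alpha_g\circ\alpha_h=\alpha_{g\Delta h}$ checked on the dense subspace $\{\widehat b : b\in A_X^{\text{fin}}\}$. The only cosmetic difference is that the paper identifies $u_g^*$ directly as the operator implementing $\alpha_g^{-1}$, whereas you argue via isometry plus dense range; the paper also takes the factorwise verification of the group law for granted, which you spell out.
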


\begin{proof}
Take $g \in \cP(X)$. 
Since $\alpha_g$ preserves $\tau$, 
the element $u_g^* \in B(H_X)$ satisfies 
$u_g^*(\widehat{b})=\alpha_g^{-1}\widehat{(}b)$ 
for $b \in A_X^{\text{fin}}$.
Hence $u_g$ is a unitary. 
This is self-adjoint because $\alpha_g^{-1}=\alpha_g$. 
The latter two equalities 
follow from the equations $\alpha_g(a\alpha_g(b))=\alpha_g(a)b$ 
and $\alpha_g(\alpha_h(b))=\alpha_{gh}(b)$ for $b \in A_X$. 
\end{proof}

\begin{definition}\label{D.group}
For an infinite set $X$ and a subgroup $\Gamma \subseteq \cP(X)$ 
we define 
\[
B_{X,\Gamma} := C^*(A_X\cup\{u_g\}_{g \in \Gamma}) 
\subseteq B(H_X).
\]
\end{definition}

\begin{remark}
One can show that $B_{X,\Gamma}$ is isomorphic 
to the crossed product $A_X \rtimes_{\alpha} \Gamma$. 
In particular $B_X$ in Section~\ref{Sec:AMnotUHF} 
is isomorphic to $B_{X,\Gamma}$ 
for $\Gamma = \{\emptyset, X\} \cong \bbZ /2 \bbZ$. 
\end{remark}

\begin{prop} \label{P.chi_neq_rd}
The C*-algebra $B_{X,\Gamma}$ satisfies 
$\chi(B_{X,\Gamma})=|X|+|\Gamma|$
and $\rd(B_{X,\Gamma})=|X|$. 
\end{prop}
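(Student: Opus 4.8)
The plan is to prove the two equalities by establishing four inequalities, namely $\rd(B_{X,\Gamma})\le |X|$, $\rd(B_{X,\Gamma})\ge |X|$, $\chi(B_{X,\Gamma})\le |X|+|\Gamma|$ and $\chi(B_{X,\Gamma})\ge |X|+|\Gamma|$. The representation density is the quicker computation, so I would dispose of it first, and in fact both bounds for $\rd$ are immediate consequences of results already available.

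For $\rd(B_{X,\Gamma})\le |X|$, note that $B_{X,\Gamma}$ is by construction a concrete subalgebra of $\cB(H_X)$, so the inclusion is a faithful representation; since $\chi(H_X)=|X|$ by Lemma~\ref{L.chi(HX)}, this gives $\rd(B_{X,\Gamma})\le\chi(H_X)=|X|$. For the reverse inequality I would use that $A_X=\bigotimes_{x\in X}A_x$ is a tensor product of the nonabelian unital algebras $A_x\cong M_2(\bbC)$, so Proposition~\ref{P.rd.1} yields $\rd(A_X)\ge|X|$; because $A_X$ is a subalgebra of $B_{X,\Gamma}$, the monotonicity of $\rd$ noted after Definition~\ref{Def:rd} gives $\rd(B_{X,\Gamma})\ge\rd(A_X)\ge|X|$. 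Hence $\rd(B_{X,\Gamma})=|X|$.

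For the character density, the upper bound comes from exhibiting a generating set of the right size. Fix a dense subset $D\subseteq A_X$ with $|D|=\chi(A_X)=|X|$ (using Lemma~\ref{L.tensor.density}). Then $D\cup\{u_g\}_{g\in\Gamma}$ generates $B_{X,\Gamma}$ and has cardinality $|X|+|\Gamma|$, so the set of $*$-polynomials in these generators with coefficients in $\bbQ+i\bbQ$ is norm-dense and has cardinality $|X|+|\Gamma|$ (infinitude of $X$ keeps this $\ge\aleph_0$); thus $\chi(B_{X,\Gamma})\le|X|+|\Gamma|$. For the lower bound, since $|X|+|\Gamma|=\max(|X|,|\Gamma|)$ as $X$ is infinite, it suffices to prove $\chi\ge|X|$ and $\chi\ge|\Gamma|$ separately. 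The first follows from monotonicity of $\chi$ applied to $A_X\subseteq B_{X,\Gamma}$ together with $\chi(A_X)=|X|$.

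The inequality $\chi(B_{X,\Gamma})\ge|\Gamma|$ is the one genuine computation, and I expect the verification inside it to be the main (if minor) point to get right. I would show that $\{u_g\}_{g\in\Gamma}$ is uniformly $2$-separated: for distinct $g,h\in\Gamma$, using $u_g^*=u_g$, $u_g^2=u_\emptyset=1$ and $u_gu_h=u_{g\Delta h}$ from the lemma preceding Definition~\ref{D.group}, one computes $u_h=u_gu_{g\Delta h}$ and hence $\|u_g-u_h\|=\|u_g(1-u_{g\Delta h})\|=\|1-u_{g\Delta h}\|$. Here $k:=g\Delta h\ne\emptyset$, so I must check that $u_k\ne 1$: choosing $x\in k$, the relation $v_xw_x=-w_xv_x$ gives $\alpha_k(w_x)=\Ad v_x(w_x)=v_xw_xv_x=-w_x\ne w_x$, so $\alpha_k\ne\id$ and therefore $u_k\ne 1$. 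Being a self-adjoint unitary distinct from $1$, $u_k$ has $-1$ in its spectrum, whence $\|1-u_{g\Delta h}\|=2$. Consequently any dense subset of $B_{X,\Gamma}$ must contain, for each $g\in\Gamma$, a point within distance $<1$ of $u_g$, and distinct $g$ force distinct such points; this gives $\chi(B_{X,\Gamma})\ge|\Gamma|$ and, combined with the above, completes the proof.
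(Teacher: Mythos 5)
Your proof is correct and takes essentially the same route as the paper's: the same two bounds for $\rd$ (the concrete representation on $H_X$ with $\chi(H_X)=|X|$ for the upper bound, and $\rd(A_X)=|X|$ via Proposition~\ref{P.rd.1} together with monotonicity for the lower bound), and the same counting for $\chi$ using the generators $A_X\cup\{u_g\}_{g\in\Gamma}$. The only difference is cosmetic: where the paper cites Lemma~\ref{L.rd.1} applied to the commuting projections $\{(u_g+1)/2\}_{g\in\Gamma}$, you compute the $2$-separation $\|u_g-u_h\|=\|1-u_{g\Delta h}\|=2$ directly from the group law, and in doing so you explicitly verify the injectivity point $u_{g\Delta h}\neq 1$ (via $\alpha_{g\Delta h}(w_x)=-w_x$) that the paper leaves implicit.
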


\begin{proof} 
We have $\chi(A_X)=|X|$ by Lemma~\ref{L.tensor.density}. 
On the other hand, 
we have $\chi(C^*(\{u_g\}_{g \in \Gamma})) \geq |\Gamma|$ 
by Lemma~\ref{L.rd.1}
because $\{(u_{g}+1)/2\}_{g \in \Gamma}$ 
is a family of commuting projections.
Since $B_{X,\Gamma}$ is generated by $A_X$ and $\{u_g\}_{g \in \Gamma}$, 
we get 
\[
|X|+|\Gamma| = \max\{|X|, |\Gamma|\} 
\leq \chi(B_{X,\Gamma}) \leq |X|+|\Gamma|
\]
This shows $\chi(B_{X,\Gamma}) = |X|+|\Gamma|$.
Since $B_{X,\Gamma} \subseteq B(H_X)$, 
we have $\rd(B_{X,\Gamma}) \leq \chi(H_X) = |X|$ 
by Lemma~\ref{L.chi(HX)}. 
We also have 
$\rd(B_{X,\Gamma}) \geq \rd(A_X) = |X|$ by Corollary~\ref{C.rd.1}. 
Hence we get $\rd(B_{X,\Gamma})=|X|$.
\end{proof}

\begin{prop} \label{P.chi_neq_rd2}
The unital C*-algebra $B_{X,\Gamma}$ is AM
if every finite subset of $\Gamma$ is included 
in a subgroup generated by $g_1,g_2,\dots, g_n \in \Gamma$ 
which are infinite and mutually disjoint. 
\end{prop}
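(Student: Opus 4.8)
The plan is to exhibit a directed family of full matrix subalgebras of $B_{X,\Gamma}$ with dense union, generalizing the construction in the proof of Proposition~\ref{P.B-X}~(1). The building blocks will be indexed by triples $(F,\vec g,\vec y)$ where $F\subseteq X$ is finite, $\vec g=(g_1,\dots,g_m)$ is a tuple of infinite, mutually disjoint elements of $\Gamma$, and $\vec y=(y_1,\dots,y_m)$ consists of distinct elements with $y_k\in g_k\setminus F$. To such a triple I associate
\[
M(F,\vec g,\vec y):=C^*\big(A_F\cup\{u_{g_k},w_{y_k}:1\le k\le m\}\big)\subseteq B_{X,\Gamma}.
\]
I also allow $m=0$, so that each $A_F$ itself appears among the blocks.

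First I would show each $M(F,\vec g,\vec y)$ is a full matrix algebra. Set $v^{(k)}:=\prod_{x\in g_k\cap F}v_x\in A_F$ and $\tilde u_k:=u_{g_k}v^{(k)}$. Since $\alpha_{g_k}(v_x)=v_x$ for $x\in g_k$, the operators $u_{g_k}$ and $v^{(k)}$ commute, so $\tilde u_k$ is a self-adjoint unitary. Using $\alpha_{g_k}\rs_{A_F}=\Ad v^{(k)}\rs_{A_F}$ together with $u_{g_k}a u_{g_k}=\alpha_{g_k}(a)$ and $\alpha_{g_k}^2=\id$, one computes $\tilde u_k a\tilde u_k=\alpha_{g_k}^2(a)=a$ for $a\in A_F$, so $\tilde u_k$ commutes with $A_F$. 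Because $y_k\in g_k\setminus F$, the unitary $w_{y_k}$ commutes with $A_F$ and satisfies $\alpha_{g_k}(w_{y_k})=v_{y_k}w_{y_k}v_{y_k}=-w_{y_k}$, hence anticommutes with $\tilde u_k$. Mutual disjointness of the $g_k$ forces the pairs $(\tilde u_k,w_{y_k})$ to commute with one another and with $A_F$. Since $u_{g_k}=\tilde u_k v^{(k)}$ with $v^{(k)}\in A_F$, we have $M(F,\vec g,\vec y)=C^*(A_F\cup\{\tilde u_k,w_{y_k}\})$, which by Lemma~\ref{Lem:M2} and the tensor-product argument of Proposition~\ref{Prop:B=limCAR} is
\[
M(F,\vec g,\vec y)=A_F\otimes\bigotimes_{k=1}^m C^*(\tilde u_k,w_{y_k})\cong M_{2^{|F|+m}}(\bbC).
\]

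Next I would verify directedness, which is exactly where the hypothesis on $\Gamma$ enters. Given two blocks $M(F_1,\vec g^{(1)},\vec y^{(1)})$ and $M(F_2,\vec g^{(2)},\vec y^{(2)})$, apply the hypothesis to the finite set $\{g^{(1)}_i\}\cup\{g^{(2)}_j\}\subseteq\Gamma$ to obtain infinite mutually disjoint $h_1,\dots,h_m\in\Gamma$ whose subgroup $\langle h_1,\dots,h_m\rangle$ contains all of these elements. Put $F:=F_1\cup F_2\cup\{y^{(1)}_i\}\cup\{y^{(2)}_j\}$ and choose distinct $z_k\in h_k\setminus F$. Then $M(F,\vec h,\vec z)$ contains both original blocks: each $u_{g^{(1)}_i}$ (resp.\ $u_{g^{(2)}_j}$) is a product of the $u_{h_k}=\tilde u_k v^{(k)}$ and so lies in $M(F,\vec h,\vec z)$, while each old generator $w_{y^{(1)}_i}$ now lies in $A_F$ because $y^{(1)}_i\in F$ — the same device that makes the family in Proposition~\ref{P.B-X}~(1) directed. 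Thus the collection is directed by inclusion, and its union contains $A_X^{\text{fin}}=\bigcup_F A_F$ (dense in $A_X$) together with every $u_g$, $g\in\Gamma$; hence it generates $B_{X,\Gamma}$ and is dense, proving $B_{X,\Gamma}$ is AM.

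I expect the main obstacle to be establishing directedness cleanly rather than the block computation: one must simultaneously replace the two generating tuples by a single tuple of infinite, mutually disjoint elements of $\Gamma$ covering both (this is precisely the content of the hypothesis, via the identity $u_gu_h=u_{gh}$) and absorb the previously chosen $w_y$-generators into the matrix algebra $A_F$ by forcing the old indices $y$ into $F$. The bookkeeping confirming that all the commutation and anticommutation relations of the $\tilde u_k$, $v^{(k)}$, $w_{y_k}$ persist under these enlargements is routine but must be carried out with care.
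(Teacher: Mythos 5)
Your proposal is correct and takes essentially the same approach as the paper: the same blocks $C^*\big(A_F\cup\{u_{g_k},w_{y_k}\}\big)$, the same corrected unitaries $\tilde u_k=u_{g_k}\prod_{x\in g_k\cap F}v_x$ (the paper's $u_i'$), and the same decomposition into $A_F$ tensored with commuting copies of $M_2(\bbC)$ via Lemma~\ref{Lem:M2}. The only difference is that you spell out the directedness and density verification (using $u_gu_h=u_{gh}$ to write each old $u_g$ as a product of the new $u_{h_k}$, and absorbing the old $w_y$-generators into the enlarged $A_F$), which the paper dispatches with the single remark that the family is directed with dense union ``by the assumption of $\Gamma$.''
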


\begin{proof} 
Take mutually disjoint infinite elements $g_1,g_2,\dots, g_n \in \Gamma$. 
Take a finite subset $F$ of $X$ and 
choose $x_i\in g_i\setminus F$ for $i=1,2, \ldots, n$. 
Let $\Lambda$ be the set of 
all such data $\lambda = (\{g_i\}_{i=1}^n,F,\{x_i\}_{i=1}^n)$, 
and define 
\[
D_\lambda := 
C^*\big(\{u_{g_i}\}_{i=1}^n \cup A_F \cup \{w_{x_i}\}_{i=1}^n\big)
\subseteq B_{X,\Gamma}.
\]
By the assumption of $\Gamma$, 
the family $\{D_\lambda\}_{\lambda \in \Lambda}$ of subalgebras 
is directed and its union is dense in $B_{[X]^2}$.
We are going to show $D_\lambda \cong M_{2^{m+n}}(\bbC)$ 
for $\lambda = (\{g_i\}_{i=1}^n,F,\{x_i\}_{i=1}^n)$ as above where $m=|F|$. 
This implies that $B_{[X]^2}$ is AM, 
and hence completes the proof. 
For $i \in \{1,2,\dots, n\}$ 
define 
\[
u_i'=u_{g_i}\prod_{x\in F\cap g_{i}} v_x \in D_\lambda.
\]
Since 
\[
\Ad u_{g_i} \rs_{A_F} 
= \Ad \Big( \prod_{x\in F\cap g_{i}} v_x \Big) \rs_{A_F}, 
\]
$u'_i$ is a self-adjoint unitary 
and commutes with the subalgebra $A_{F}$.
It is easy to see that 
the family $\{u'_{i}\}_{i=1}^n$ mutually commutes.
Since $x_i\in g_i\setminus F$ 
and $g_i$ is disjoint from $g_j$ for $j\neq i$, 
we have that $w_{x_i}$ commutes with $A_F$ 
and $\{u'_{j},w_{x_j}\}_{j\neq i}$.
Finally $w_{x_i}$ and $u'_i$ anti-commute 
because so do $w_{x_i}$ and $u_{g_i}$. 
Therefore $C^*(u'_{i},w_{x_i})$ is isomorphic to $M_2(\bbC)$
for $i \in \{1,2,\dots, n\}$ by Lemma~\ref{Lem:M2},
and the family
\[
\{C^*(u'_{i},w_{x_i})\}_{i=1}^n \cup \{A_F\}
\]
mutually commutes.
Since $D_{\lambda}$ is generated by
these mutually commuting subalgebras,
we get
\[
D_{\lambda} =
\Big( \bigotimes_{i=1}^{n} C^*(u'_{i},w_{x_i}) \Big) \otimes A_{F}
\cong \bigotimes_{n+|F|}M_2(\bbC) \cong M_{2^{n+m}}(\bbC),
\]
as required.
\end{proof}

\begin{remark}
For finite $g \in \cP(X)$, 
we have $\alpha_g = \Ad \big( \prod_{x\in g} v_x \big)$. 
From this fact, 
one can show that $B_{X,\Gamma}$ is not AM 
if $\Gamma$ contains a finite nonempty element $g$ 
(one can also show that $B_{X,\Gamma}$ is always AF). 
Thus in order for $B_{X,\Gamma}$ to be AM it is necessary 
that every $g \in \Gamma\setminus\{\emptyset\}$ is infinite. 
One can show that this is also sufficient 
although its proof becomes significantly complicated 
compared with Proposition~\ref{P.chi_neq_rd2}. 
We shall not need such generality 
for proving Theorem~\ref{T.rd.1}. 
\end{remark}

\begin{remark}
One can show that $B_{X,\Gamma}$ is not UHF 
when $|X| \geq \aleph_1$ and $\Gamma \neq \{\emptyset\}$ 
in a similar way to the proof of Proposition~\ref{P.B-X}~(2). 
We omit the proof because we do not need this 
(see the proof of Theorem~\ref{T.rd.1} for some special cases). 
One can also show that $Z_{B_{X,\Gamma}}(A_X)=\bbC 1$ holds 
when every $g \in \Gamma\setminus\{\emptyset\}$ is infinite 
(even in the case $\chi(A_X) < \chi(B_{X,\Gamma})$). 
This shows that a generalization of 
question \cite[Problem~8.3]{Dix:Some} 
for nonseparable AM algebras has a very strong negative 
answer (see Corollary~\ref{Cor:DixProb8.3}). 
The authors would like to thank Bruce Blackadar 
for pointing out the phenomenon $Z_{B_{X,\Gamma}}(A_X)=\bbC 1$. 
This strong phenomenon does not occur for UHF algebras
because we can show $\chi(Z_B(A)) = \chi(B)$ for 
a subalgebra $A$ of a UHF algebra $B$ with $\chi(A) < \chi(B)$, 
and hence in this case $Z_B(A)$ is huge. 
\end{remark}

\begin{lemma}\label{L:Boolean}
For every cardinal $\kappa$ 
with $|X| \leq \kappa \leq 2^{|X|}$, 
there exists a subgroup $\Gamma \subseteq \cP(X)$ with $|\Gamma|=\kappa$ 
such that every finite subset of $\Gamma$ is included 
in a subgroup generated by $g_1,g_2,\dots, g_n \in \Gamma$ 
which are infinite and mutually disjoint. 
\end{lemma}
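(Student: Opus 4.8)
The plan is to reduce this to a purely Boolean-algebraic statement and then force infiniteness by an inflation trick. First I would record the following reformulation of the stated closure condition: a subgroup $\Gamma \subseteq \cP(X)$ has the required property \emph{if and only if} $\Gamma$ is closed under intersection (so that, together with the symmetric difference $\triangle$, it is a subring of the Boolean ring $(\cP(X),\triangle,\cap)$) and every nonempty member of $\Gamma$ is infinite. Indeed, given $h_1,\dots,h_m\in\Gamma$, the coarsest family of mutually disjoint pieces refining them is given by the nonempty \emph{positive atoms} $\bigcap_{j\in P}h_j\setminus\bigcup_{j\notin P}h_j$ (for $\emptyset\neq P\subseteq\{1,\dots,m\}$) of the finite Boolean algebra they generate, and each $h_j$ is the $\triangle$-sum of the atoms it contains; any family of disjoint $g_i$ witnessing the condition must in fact have each $g_i$ contained in a single atom and must cover each such atom, so these atoms necessarily lie in $\Gamma$ and are infinite. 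Closure under $\cap$ (hence under $\setminus$) is exactly what puts these atoms into $\Gamma$.

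Next I would construct a subring of the required cardinality all of whose nonzero elements are infinite. Since $X$ is infinite, $|X|=|X|\cdot\aleph_0$, so I can fix a partition $\{P_y\}_{y\in Y}$ of $X$ into infinite sets indexed by a set $Y$ with $|Y|=|X|$. The map $\phi\colon\cP(Y)\to\cP(X)$ defined by $\phi(S)=\bigcup_{y\in S}P_y$ is an injective homomorphism of Boolean rings, and $\phi(S)$ is infinite whenever $S\neq\emptyset$ because each $P_y$ is infinite. Now, using $\kappa\le 2^{|X|}=2^{|Y|}$, I choose $\kappa$ distinct subsets $\{Z_\alpha:\alpha<\kappa\}$ of $Y$ and let $R\subseteq\cP(Y)$ be the subring they generate. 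Finally I set $\Gamma:=\phi[R]$.

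It then remains to verify the three demands. Because $\phi$ is an injective ring homomorphism, $\Gamma=\phi[R]$ is a subring of $\cP(X)$, in particular a subgroup under $\triangle$, and every nonempty element $\phi(S)$ (with $S\in R$, $S\neq\emptyset$) is infinite. For the closure condition, given $\gamma_1,\dots,\gamma_m\in\Gamma$ I write $\gamma_j=\phi(S_j)$ with $S_j\in R$; the nonempty positive atoms $T_1,\dots,T_k$ of the algebra generated by $S_1,\dots,S_m$ belong to $R$ (as $R$ is closed under $\cap$ and $\setminus$), so the sets $g_l:=\phi(T_l)\in\Gamma$ are mutually disjoint, infinite, and each $\gamma_j$ is the $\triangle$-sum of those $g_l$ with $T_l\subseteq S_j$, whence $\{\gamma_1,\dots,\gamma_m\}$ lies in the subgroup generated by $g_1,\dots,g_k$. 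For the cardinality, the $\kappa$ distinct generators give $|R|\ge\kappa$, while each element of $R$ is a finite $\triangle$-sum of finite $\cap$-products of generators, so $|R|\le|[\kappa]^{<\aleph_0}|=\kappa$; injectivity of $\phi$ yields $|\Gamma|=|R|=\kappa$.

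The only genuine subtlety—and what I would flag as the main obstacle—is that an unadorned subring of $\cP(X)$ will typically contain finite nonempty sets (singletons, say), which violate the infiniteness requirement and hence the necessity that every $g\in\Gamma\setminus\{\emptyset\}$ be infinite noted in the preceding remark. The inflation by $\phi$ is exactly the device that circumvents this: it turns every nonempty set into an infinite one while preserving all Boolean operations and all relevant cardinalities, so the Boolean-algebraic freedom needed to hit an arbitrary $\kappa\in[|X|,2^{|X|}]$ survives intact.
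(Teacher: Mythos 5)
Your proposal is correct and takes essentially the same route as the paper: your inflation map $\phi(S)=\bigcup_{y\in S}P_y$ for a partition of $X$ into $|X|$ infinite pieces is precisely the paper's injective homomorphism $\varphi(g)=\iota[g\times\bbN]$ for a bijection $\iota\colon X\times\bbN\to X$, and both arguments then take $\Gamma$ to be the image of the Boolean subring (the paper: subalgebra) generated by $\kappa$ distinct sets and use the nonempty atoms of the finite subalgebra generated by a given finite subset as the disjoint infinite generators. The only difference is one of detail, not substance: the paper dismisses the atom verification and the cardinality count $|\Gamma_0|=\kappa$ as clear, whereas you spell them out (including the unneeded converse of your reformulation).
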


\begin{proof}
Take a subset $Y \subseteq \cP(X)$ with $|Y| = \kappa$. 
Let $\Gamma_0$ be 
the Boolean subalgebra of $\cP(X)$ generated by $Y$, 
that is the smallest subset of $\cP(X)$ containing $Y$ and 
closed under taking unions, intersections and complements. 
Then $\Gamma_0$ is a subgroup of $\cP(X)$ 
with $|\Gamma_0|=\kappa$. 
Choose a bijection $\iota \colon X\times \bbN \to X$ 
and define an injective homomorphism 
\[
\varphi \colon \cP(X) \ni g \mapsto \iota[g\times \bbN] \in \cP(X).
\]
Let $\Gamma := \varphi[\Gamma_0] \subseteq \cP(X)$. 
Then every finite subset of $\Gamma$ is included 
in a finite Boolean subalgebra of $\Gamma$. 
If $g_1, g_2,\ldots, g_n \in \Gamma$ 
are the atoms of this subalgebra 
then they clearly satisfy the requirements. 
\end{proof}

\begin{theorem} \label{T.rd.1} 
For every pair of infinite cardinals $\kappa$ and $\nu$ 
with $\kappa \geq \aleph_1$ and $\nu \leq \kappa \leq 2^\nu$, 
there exists a unital AM algebra 
of character density $\kappa$ 
and representation density $\nu$ 
which is not UHF. 
\end{theorem}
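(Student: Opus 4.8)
The plan is to build the required algebra as a crossed product $B_{X,\Gamma}$ of the UHF algebra $A_X$, tuning $X$ to fix the representation density and $\Gamma$ to inflate the character density, and to treat the cases $\kappa>\nu$ and $\kappa=\nu$ separately, since only the former can be settled by the discrepancy between $\chi$ and $\rd$.

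First I will fix a set $X$ with $|X|=\nu$ and dispose of the case $\kappa>\nu$. Because $\nu\le\kappa\le 2^\nu=2^{|X|}$, Lemma~\ref{L:Boolean} yields a subgroup $\Gamma\subseteq\cP(X)$ with $|\Gamma|=\kappa$ all of whose finite subsets lie in a subgroup generated by mutually disjoint infinite elements. Proposition~\ref{P.chi_neq_rd2} then makes $B_{X,\Gamma}$ a unital AM algebra, and Proposition~\ref{P.chi_neq_rd} gives $\chi(B_{X,\Gamma})=|X|+|\Gamma|=\kappa$ together with $\rd(B_{X,\Gamma})=|X|=\nu$. Since $\kappa\neq\nu$ in this case, Corollary~\ref{C.rd.1} instantly rules out $B_{X,\Gamma}$ being UHF. (Note that $\kappa>\nu\ge\aleph_0$ already forces $\kappa\ge\aleph_1$, so the standing hypothesis $\kappa\ge\aleph_1$ is automatic here.)

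The delicate case, and the main obstacle, is $\kappa=\nu$, where $\chi=\rd$ and Corollary~\ref{C.rd.1} gives no information; here the hypothesis $\kappa\ge\aleph_1$ becomes essential, since it forces $X$, with $|X|=\nu=\kappa$, to be uncountable. My plan is to bypass the density argument entirely and instead use the algebra $B_X$ of Section~\ref{Sec:AMnotUHF} (the special case $\Gamma=\{\emptyset,X\}$ of the present construction), whose failure to be UHF was already established through the non-complemented separable subalgebras $\{B_Y\}_{Y\in[X]^{\aleph_0}}$. Concretely, Proposition~\ref{P.B-X}~(1) shows that $B_X$ is a unital AM algebra with $\chi(B_X)=|X|=\nu$, and Proposition~\ref{P.B-X}~(2) shows it is not UHF because $X$ is uncountable.

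It then remains only to confirm $\rd(B_X)=\nu$, which is routine: $A_X\subseteq B_X$ is UHF with $\chi(A_X)=|X|=\nu$, hence $\rd(A_X)=\nu$ by Corollary~\ref{C.rd.1}, and monotonicity of $\rd$ together with the general bound $\rd(B_X)\le\chi(B_X)=\nu$ pins $\rd(B_X)$ to $\nu$. Assembling the two cases yields, for every admissible pair $(\kappa,\nu)$, a unital AM algebra of character density $\kappa$ and representation density $\nu$ that is not UHF. I expect essentially all of the conceptual difficulty to lie in recognizing that the equal-cardinal case cannot be handled by the cardinal invariants and must be routed through the complementation machinery of Section~\ref{Sec:AMnotUHF}; everything else is bookkeeping with the cited propositions.
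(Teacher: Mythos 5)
Your proposal is correct and follows essentially the same route as the paper: the case $\nu<\kappa\leq 2^\nu$ via Lemma~\ref{L:Boolean}, Propositions~\ref{P.chi_neq_rd} and \ref{P.chi_neq_rd2}, and Corollary~\ref{C.rd.1}, and the case $\kappa=\nu$ via the algebra $B_X$ of Proposition~\ref{P.B-X}. Your only addition is to spell out the verification $\rd(B_X)=\nu$ (via $\rd(A_X)=\nu$ from Corollary~\ref{C.rd.1}, monotonicity, and $\rd\leq\chi$), which the paper leaves implicit.
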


\begin{proof}
For $\kappa = \nu \geq \aleph_1$, 
the example $B_X$ in Proposition~\ref{P.B-X}
for $|X|=\kappa$ 
is a unital AM algebra 
of character density $\kappa$ 
and representation density $\nu$ 
which is not UHF.
Suppose $\nu < \kappa \leq 2^\nu$. 
Take a set $X$ with $|X| = \nu$. 
By Lemma~\ref{L:Boolean}, 
there exists a subgroup $\Gamma \subseteq \cP(X)$ with $|\Gamma|=\kappa$ 
satisfying the assumption of Proposition~\ref{P.chi_neq_rd2}.
Then $B_{X,\Gamma}$ is a unital AM algebra 
of character density $\kappa$ 
and representation density $\nu$ 
by Proposition~\ref{P.chi_neq_rd} and Proposition~\ref{P.chi_neq_rd2}. 
This is not UHF by Corollary~\ref{C.rd.1}. 
\end{proof}

From Theorem~\ref{T.rd.1} we have the following.

\begin{corollary} \label{Cor:AM separable} 
There is a unital AM algebra 
faithfully represented on a separable Hilbert space
that is not a UHF algebra. \qed
\end{corollary}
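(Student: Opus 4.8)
The plan is to read this off directly from Theorem~\ref{T.rd.1} by specializing the representation density to $\aleph_0$. Recall from Definition~\ref{Def:rd} that $\rd(A)=\aleph_0$ means precisely that $A$ admits a faithful representation on a Hilbert space $H$ with $\chi(H)=\aleph_0$, i.e.\ on a separable Hilbert space. So it suffices to produce a unital AM algebra that is not UHF and has representation density $\aleph_0$, and Theorem~\ref{T.rd.1} is built to deliver exactly this once it is fed the right pair of cardinals.

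Concretely, I would apply Theorem~\ref{T.rd.1} with $\nu=\aleph_0$ and $\kappa=\aleph_1$. The hypotheses of that theorem require $\kappa\geq\aleph_1$ and $\nu\leq\kappa\leq 2^\nu$. The first holds since $\kappa=\aleph_1$; the inequality $\nu\leq\kappa$ holds since $\aleph_0\leq\aleph_1$; and the remaining inequality $\kappa\leq 2^\nu$ reads $\aleph_1\leq 2^{\aleph_0}$, which is valid in ZFC by Cantor's theorem, the smallest uncountable cardinal being no larger than the cardinality of the continuum. Thus all hypotheses are met without invoking any additional set-theoretic axioms.

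Theorem~\ref{T.rd.1} then yields a unital AM algebra $A$ with $\chi(A)=\aleph_1$ and $\rd(A)=\aleph_0$ that is not UHF, and the equality $\rd(A)=\aleph_0$ supplies a faithful representation of $A$ on a separable Hilbert space, which is exactly the assertion of the corollary. There is essentially no obstacle in the derivation itself: all the genuine work lives in Theorem~\ref{T.rd.1} and in the construction of $B_{X,\Gamma}$ behind it. The one point worth flagging is conceptual rather than technical, namely that character density and representation density truly diverge here, since the algebra is nonseparable ($\chi(A)=\aleph_1$) yet still acts faithfully on a separable space; so the statement is not a mere restatement of separability but a real separation of the two invariants.
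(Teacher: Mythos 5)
Your proposal is correct and is exactly the paper's argument: the corollary is stated as an immediate consequence of Theorem~\ref{T.rd.1}, instantiated (as you do) at $\nu=\aleph_0$ and any $\kappa$ with $\aleph_1\leq\kappa\leq 2^{\aleph_0}$, the choice $\kappa=\aleph_1$ being legitimate in ZFC since $\aleph_1\leq 2^{\aleph_0}$. Your reading of $\rd(A)=\aleph_0$ as the existence of a faithful representation on a separable Hilbert space matches Definition~\ref{Def:rd}, so nothing further is needed.
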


This corollary answers 
a half of the question raised by Masamichi Takesaki. 
The following is the other half which we could not answer. 

\begin{problem}\label{P.Takesaki}
Is there an LM algebra faithfully
represented on a separable Hilbert space
which is not AM?
\end{problem}

Since $\chi\big(B(\ell^2(\bbN))\big)=2^{\aleph_0}$, 
by Theorem~\ref{Thm1} (1) there is no such a C*-algebra 
if we assume the continuum hypothesis
$2^{\aleph_0}=\aleph_1$. 
We do not know what happens if we do not
assume the continuum hypothesis.

\subsection*{Acknowledgment}
Many of the results presented in this paper were proved while both
authors were visiting the Fields Institute in Fall 2007 and in early
2008. I.F. would like to thank colleagues from his department, in
particular Juris Stepr\=ans and Man Wah Wong, for making his stay at
the Fields Institute possible. Both authors would like to thank
George Elliott, Toshihiko Masuda, Narutaka Ozawa, 
N. Christopher Phillips, Juris Stepr\=ans, Reiji Tomatsu and  
Andrew Toms  for illuminating conversations. We would also like to thank the
anonymous referee for several useful remarks. 

\providecommand{\bysame}{\leavevmode\hbox to3em{\hrulefill}\thinspace}
\providecommand{\MR}{\relax\ifhmode\unskip\space\fi MR }
\providecommand{\MRhref}[2]{%
  \href{http://www.ams.org/mathscinet-getitem?mr=#1}{#2}
}
\providecommand{\href}[2]{#2}

\end{document}